\newcommand{\xrgh}{\mathbf{x}}
\newcommand{\der}{\delta}
\newcommand{\cacha}{\Hat{\mathcal{C}}}
\newcommand{\delha}{\hat{\delta}}
\newcommand{\Laha}{\hat{\Lambda}}
\newcommand{\norm}[1]{\lVert #1\rVert}
\newcommand{\yti}{\tilde{y}}
\newcommand{\xti}{\tilde{x}}
\newcommand{\yba}{\overline{y}}
\newcommand{\ka}{\kappa}
\newcommand{\bt}{\mathbf{2}}
\DeclareMathOperator{\id}{\text{Id}}
\DeclareMathOperator{\Id}{Id}
\newcommand{\eqcolon}{\mathrel{\mathord{=}\raise.2\p@\hbox{:}}}
\newcommand{\coloneq}{\mathrel{\raise.2\p@\hbox{:}\mathord{=}}}
\newcommand{\R}{\mathbb R}
\newcommand{\N}{\mathbb N}
\newcommand{\cb}{\mathcal B}
\newcommand{\cac}{\mathcal C}
\newcommand{\cj}{\mathcal J}
\newcommand{\cl}{\mathcal L}
\newcommand{\cn}{\mathcal N}
\newcommand{\cq}{\mathcal Q}
\newcommand{\cs}{\mathcal S}
\newcommand{\al}{\alpha}
\newcommand{\ep}{\varepsilon}
\newcommand{\ga}{\gamma}
\newcommand{\la}{\lambda}
\newcommand{\vp}{\varphi}
\newcommand{\be}{\beta}
\newcommand{\lp}{\left(}
\newcommand{\rp}{\right)}
\newcommand{\lc}{\left[}
\newcommand{\rc}{\right]}
\newcommand{\lcl}{\left\{}
\newcommand{\rcl}{\right\}}
\newcommand{\lln}{\left|}
\newcommand{\rrn}{\right|}
\newcommand{\lla}{\left\langle}
\newcommand{\rra}{\right\rangle}
\newcommand{\bean}{\begin{eqnarray*}}
\newcommand{\eean}{\end{eqnarray*}}
\newcommand{\ben}{\begin{enumerate}}
\newcommand{\een}{\end{enumerate}}
\newcommand{\beq}{\begin{equation}}
\newcommand{\eeq}{\end{equation}}
\newtheorem{theorem}{Theorem}[section]
\newtheorem{corollary}[theorem]{Corollary}
\newtheorem{definition}[theorem]{Definition}
\newtheorem{notation}[theorem]{Notation}
\newtheorem{lemma}[theorem]{Lemma}
\newtheorem{proposition}[theorem]{Proposition}
\theoremstyle{remark}
\newtheorem{remark}[theorem]{Remark}
\begin{document}

\title{Numerical schemes for rough parabolic equations}

\author{Aurélien Deya}

\begin{abstract}
This paper is devoted to the study of numerical approximation schemes for a class of parabolic equations on $(0,1)$ perturbed by a non-linear rough signal. It is the continuation of \cite{RHE,RHE-glo}, where the existence and uniqueness of a solution has been established. The approach combines rough paths methods with standard considerations on discretizing stochastic PDEs. The results apply to a geometric $2$-rough path, which covers the case of the multidimensional fractional Brownian motion with Hurst index $H>1/3$.
\end{abstract}

\address{Aur{\'e}lien Deya, Institut {\'E}lie Cartan Nancy, Universit\'e de Nancy 1, B.P. 239,
54506 Vand{\oe}uvre-l{\`e}s-Nancy Cedex, France}
\email{deya@iecn.u-nancy.fr}

\subjclass[2000]{60H35,60H15,60G22}
\date{\today}
\keywords{Rough paths theory; Stochastic PDEs; Approximation schemes; Fractional Brownian motion}

\maketitle

\section{Introduction}

This paper is part of an ongoing project whose general objective is to adapt the rough paths methods for the study of stochastic partial differential equations. The idea is to extend the concept of a PDE solution so as to handle the case of a non differentiable (and non Wiener-type) driving perturbation. So far, let us say that two kinds of approaches have been considered in this direction. The first one, due to Friz, Caruana, Oberhauser and Diehl (\cite{car-friz-ober,friz-ober-1,friz-ober-2,diehl-friz}), finds its inspiration in the \emph{viscosity}-solution theory for (ordinary) PDEs, and which efficiently combines with the rough paths stability results. The second one, developped by Gubinelli, Tindel and the author (\cite{GT,RHE,RHE-glo}) on the one hand and Teichmann (\cite{teich}) on the other, takes the \emph{mild} formulation of PDEs as the basic model, and then tries to take profit of the semigroup regularizing properties in order to cope with time roughness. In this sense, the latter approach happens to be quite close to the stochastic infinite-dimensional theory by Da Prato and Zabczyk \cite{daprato-zab} (among others), and it shares many characteristics with the recent works of Jentzen, Kloeden and Röckner \cite{jen-2,jen-klo-3,jent-roeck}.

\smallskip

In both \emph{viscosity} and \emph{mild} approaches, the solution of the rough PDE under consideration is obtained by means of theoretical arguments, i.e., either with a fixed-point theorem or an abstract stability result, which give no clue on how to represent this solution. The aim of this paper is to remedy the problem by introducing easily-implementable approximation algorithms. To be more specific, we intend to follow the \emph{mild} formulation of \cite{GT,RHE,RHE-glo} and show that this formalism can be combined with the classical discretization procedures for (Wiener) SPDEs.

\smallskip

To this end, the equation that we will focus on throughout the paper is the following:
\begin{equation}\label{equa-strong}
y_0=\psi \in L^2(0,1) \quad , \quad dy_t=A y_t \, dt+\sum_{i=1}^m f_i(y_t) \, dx^i_t \quad , \quad t\in [0,1],
\end{equation}
where:
\begin{list}{\labelitemi}{\leftmargin=1em\itemsep=1em}
\item $A=\partial_\xi(a \cdot \partial_\xi)+c$ is a Sturm-Liouville operator with Dirichlet boundary conditions on $(0,1)$,
\item $f_i(y_t)(\xi):=f_i(y_t(\xi))$ for some smooth enough function $f_i:\R \to \R$,
\item $x:[0,1] \to \R^m$ is a $\ga$-Hölder path with $\ga >1/3$ which gives rise to a geometric rough path of order $1$ (see Assumption \textbf{(X1)$_\ga$}) or $2$ (see Assumption \textbf{(X2)$_\ga$}).
\end{list}

\

Thanks to the results of \cite{CQ}, we know that the latter hypothesis includes in particular the case where $x$ is a fractional Brownian motion (fBm in the sequel) with Hurst index $H>1/3$. Thus, Equation (\ref{equa-strong}) offers in this situation a model that can deal with the long-range dependance property at the core of many applications in engineering, biophysics or mathematical finance (see for instance \cite{denk-meintrup,kou,rogers}). It is worth mentioning that in the fBm case, the equation can also be handled with Malliavin calculus tools (see \cite{TTV,nu-vui,ss-vui,hu-nu}), but for $H>1/2$ or for very particular choices of $f_i$ only ($f_i=1$ or $f_i=\Id$).

\smallskip

The existence and uniqueness of a mild global solution for (\ref{equa-strong}) has been established in \cite{RHE} when $x$ is a $1$-rough path (\emph{Young case}) and in \cite{RHE-glo} when $x$ is a $2$-rough path (\emph{rough case}). We will of course go back to the exact statement of these two results during the study. The approximation procedure will then stem from two successive discretization steps, in accordance with the strategy displayed for Wiener SPDEs (see \cite{gyon} or \cite{haus}): we first turn to a time-discretization of the problem and then perform a space-discretization of the algorithm, following the Galerkin projection method. Actually, the shape of the schemes will be derived from the very interpretation of the rough term involved in (\ref{equa-strong}). For this reason, let us remind the reader with a few key-points of the approach displayed in \cite{RHE,RHE-glo}:

\

\begin{list}{\labelitemi}{\leftmargin=1em\itemsep=1em}
\item Following the mild formulation of (S)PDEs, the equation is analyzed as
\begin{equation}\label{equa-mild-intro}
y_t=S_t \psi+\sum_{i=1}^m \int_0^t S_{t-u} \, dx^i_u \, f_i(y_u) \quad , \quad t\in [0,1],
\end{equation}
where $S$ stands for the semigroup generated by $A$. This is a classical change of perspective (see \cite{daprato-zab}) and it allows us to resort to the numerous regularizing properties of $S$ (some of these properties are reported in Subsection \ref{section-cadre}).
\item As is the case with rough standard systems, the interpretation of the right-hand-side of (\ref{equa-mild-intro}) relies on the expansion of the convolutional integral $\int_s^t  S_{t-u} \, dx^i_u \, f_i(y_u)$. This expansion gives rise to a decomposition such as
\begin{equation}\label{int-convol-intro}
\int_s^t S_{t-u} \, dx^i_u \, f_i(y_u)=P_{ts}+R_{ts},
\end{equation}
where $P$ is a "main" term and $R$ a "residual" term with high regularity in the time parameters $(s,t)$, $R$ being thus likely to disappear from an infinitesimal point of view (see (\ref{decompo-int-young}) and (\ref{dec-int-rou-reg}) for examples of such a splitting). Once endowed with the decomposition (\ref{int-convol-intro}), the time-discretization is naturally obtained by keeping only the main term $P$ between two successive times of the partition:
\begin{equation}\label{inter-scheme-intro}
y^M_0=\psi \quad , \quad y^M_{t_{k+1}}=S_{t_{k+1}-t_k} y^M_{t_k}+P_{t_{k+1}t_k},
\end{equation}
with for instance $t_k=t_k^M=k/M$. The reasoning can here be compared with the recent approach by Jentzen and Kloeden for the treatment of a Wiener noise (see \cite{jen-klo-1,jen-klo-2,jen-klo-win}): in order to deduce efficient approximation schemes, the two authors lean on a Taylor expansion of the Wiener solution, which indeed fits the pattern given by (\ref{int-convol-intro}).
\item Then, in contrast to the standard rough systems, an additional step has to be performed in this infinite-dimensional context, so as to retrieve a practically-implementable algorithm. In brief, it consists in projecting the (intermediate) scheme (\ref{inter-scheme-intro}) onto increasing finite-dimensional subspaces of $L^2(0,1)$. We shall carefully examine how to combine this projection with the rough paths machinery (see Subsections \ref{subsec:disc-spa-you} and \ref{subsec:disc-spa-rou}).
\end{list}

\

Let us now present the main results of the paper. To do so, let us be first a little bit more specific about the operator $A$ that we will consider in our study:

\

\textbf{Hypothesis:} \emph{Throughout the paper, we assume that $A$ is a Sturm-Liouville operator with Dirichlet boundary conditions on $(0,1)$ that can be written as $A=\partial_\xi(a \cdot \partial_\xi)+c$, where $c:[0,1] \to \R$ is a continuous function and $a:[0,1] \to \R$ is a continuously differentiable function satisfying $a(\xi) \geq \al$, for some strictly positive constant $\al$}.

\

These conditions ensure in particular the existence of an orthonormal basis $(e_n)$ of eigenfunctions of $A$, and we denote by $(\la_n)$ the sequence of associated eigenvalues (remember that $\la_n \stackrel{n \to \infty}{\longrightarrow} \infty$). The discretization procedure for (\ref{equa-strong}) will highly depend on the Hölder coefficient $\ga$ of $x$: as one might expect, the smaller $\ga$ (i.e., the rougher $x$), the more sophisticated the scheme. In fact, as in the standard rough paths theory, we shall separately deal with the two cases $\ga >1/2$ and $\ga \in (1/3,1/2]$, which will receive distinct treatments. In the following statements, we denote by $\cb_\ka$ ($\ka \geq 0$) the fractional Sobolev spaces associated with $A$ (see Subsection \ref{section-cadre}).

\begin{theorem}[Young case]\label{main-result-young}
Suppose that $\ga \in (\frac{1}{2},1)$ and that Assumptions \textbf{(X1)$_\ga$} and \textbf{(F)$_2$} (see Subsection \ref{subsec:gen-assump}) are both satisfied. Fix $\ga'\in (\max(1-\ga,\frac{\ga}{2}),\frac{1}{2})$ and suppose in addition that $\psi \in \cb_{\ga'}$. Then there exists a function $C:(\R^+)^2 \to \R^+$ bounded on bounded sets such that if $y$ is the mild solution of (\ref{equa-mild}) with initial condition $\psi$ and $y^{M,N}$ is the path generated by the Euler scheme (\ref{euler-scheme}), one has 
\begin{multline}\label{result-schema-young}
\sup_{k\in \{0,\ldots,M\}} \norm{y_{t^M_k}-y^{M,N}_{t^M_k}}_{\cb_{\ga'}}+\sup_{l<k \in \{0,\ldots,M\}} \frac{\norm{(y-y^{M,N})_{t_k^M}-(y-y^{M,N})_{t_l^M}}_\cb}{| t_k^M-t_l^M|^{\ga'}} \\
 \leq C\lp \norm{\psi}_{\cb_{\ga'}},\norm{x}_\ga \rp \lcl \lc \norm{x-x^M}_\ga+\frac{1}{M^{\ga+\ga'-1}}\rc+ \lc \norm{\psi-P_N \psi}_{\cb_{\ga'}}+\frac{1}{\la_N^{\ga-\ga'}}\rc \rcl.
\end{multline}
\end{theorem}

\begin{theorem}[Rough case]\label{theo-rough}
Suppose that $\ga \in (\frac{1}{3},\frac{1}{2}]$ and that Assumptions \textbf{(X2)$_\ga$} and \textbf{(F)$_3$} (see Subsection \ref{subsec:gen-assump}) are both satisfied. Fix $\ga'\in (1-\ga,2\ga]$ and suppose in addition that $\psi \in \cb_{\ga'}$. Then for every parameters $\be,\eta$ satisfying
\begin{equation}\label{cond-para}
0< \be < \inf \lp \ga+\ga'-1, \ga-\ga'+\frac{1}{2} \rp \quad , \quad 0<\eta < \ga+\ga'-1,
\end{equation}
there exists a function $C=C_{\be,\eta}:(\R^+)^2 \to \R^+$ bounded on bounded sets such that if $y$ is the mild solution of (\ref{equa-mild}) in $\cb_{\ga'}$ with initial condition $\psi$ and $y^{M,N}$ is the path generated by the Milstein scheme (\ref{milstein-scheme}), one has
\begin{multline}\label{main-cont-rough}
\sup_{k\in \{0,\ldots,2^M\}} \norm{y_{t^M_k}-y^{M,N}_{t^M_k}}_{\cb_{\ga'}}+\sup_{l<k \in \{0,\ldots,2^M\}} \frac{\norm{(y-y^{M,N})_{t_k^M}-(y-y^{M,N})_{t_l^M}}_\cb}{| t_k^M-t_l^M|^{\ga}}\\
 \leq C\lp \norm{\psi}_{\cb_{\ga'}},\norm{\xrgh}_\ga \rp \lcl \lc \norm{\xrgh-\xrgh^{2^M}}_\ga+\frac{1}{\lp 2^M \rp^\be}\rc +\lc \norm{\psi-P_N \psi}_{\cb_{\ga'}}+\frac{1}{\la_N^{\eta}} \rc \rcl.
\end{multline}
\end{theorem}

Thanks to the bounds exhibited in \cite{DNT} (and recalled in Proposition \ref{bound-rough-fbm}), we can immediately apply Theorems \ref{main-result-young} and \ref{theo-rough} to the fBm situation so as to retrieve almost sure convergence results. Let us focus for instance on the \emph{rough} case:

\begin{corollary}\label{cor-intro-fbm}
Suppose that $x=B$ is a $m$-dimensional fBm with Hurst index $H\in (1/3,1/2]$. Fix $\ga \in (1/3,H)$, $\ga'\in (1-\ga,2\ga]$ and suppose in addition that $\psi$ is infinitely differentiable on $[0,1]$. Denote by $Y$ the (a.s.) mild solution of (\ref{equa-strong}) and by $Y^{M,N}$ the process generated by the Milstein scheme (\ref{milstein-scheme}). Then for every parameters $\be,\eta$ satisfying (\ref{cond-para}), one has
\begin{multline}\label{main-cont-fBm}
\sup_{k\in \{0,\ldots,2^M\}} \norm{Y_{t^M_k}-Y^{M,N}_{t^M_k}}_{\cb_{\ga'}}+\sup_{l<k \in \{0,\ldots,2^M\}} \frac{\norm{(Y-Y^{M,N})_{t_k^M}-(Y-Y^{M,N})_{t_l^M}}_\cb}{| t_k^M-t_l^M|^{\ga}}\\
 \leq C_{\psi,B} \lcl\frac{\sqrt{M}}{(2^M)^{H-\ga}}+\frac{1}{\lp 2^M \rp^\be}+\frac{1}{\la_N^{\eta}} \rcl,
\end{multline}
where $C_{\psi,B}$ is an almost surely finite random variable.
\end{corollary}

\

As far as we are aware, this is the first occurence of approximation schemes for a nonlinear PDE involving a fractional noise, for both the Young case and the rough case. These results provide us with a new evidence of the efficiency of the rough paths approach in the field of numerical integration. Let us now make a few comments about the above statements.

\

\begin{remark}
As we shall see in Section \ref{sec-rough-case}, the use of dyadic intervals for the Milstein scheme (\ref{milstein-scheme}) is justified by the need of a decreasing sequence of partitions in the patching argument of Proposition \ref{prop:contr-j-m-n}. However, our convergence result can probably be extended to any sequence of partitions whose meshes tend to $0$, at the price of more intricate local considerations in the proof of the latter proposition.
\end{remark}

\begin{remark}
The bound (\ref{result-schema-young}) (resp. (\ref{main-cont-rough})) is shaped according to our two-step reasoning. Indeed, the first bracket $[ \norm{x-x^M}_\ga+\frac{1}{M^{\ga+\ga'-1}}]$ (resp. $[ \norm{\xrgh-\xrgh^{2^M}}_\ga+\frac{1}{\lp 2^M \rp^\be}]$) corresponds to the error of approximation due to the time discretization of (\ref{equa-strong}). It can be compared with the bound exhibited in \cite{DNT} for standard rough systems. Then the second bracket $[ \norm{\psi-P_N \psi}_{\cb_{\ga'}}+\frac{1}{\la_N^{\ga-\ga'}}]$ (resp. $[ \norm{\psi-P_N \psi}_{\cb_{\ga'}}+\frac{1}{\la_N^{\eta}} ]$) is the consequence of the projection of the (time-discretized) equation onto the finite-dimensional space $V_N:= \text{Span}\lcl e_n, \ 1\leq n \leq N \rcl$. 
\end{remark}

\begin{remark}
The use of a $\ga$-Hölder norm (with the constraint $\ga >1/3$) in the convergence result (\ref{main-cont-fBm}) is natural in a rough paths setting and is directly linked to the continuity statements for the Itô map associated with (\ref{equa-strong}) (see Theorems \ref{theo-young-abs} and \ref{theo-exi-rough}). Nevertheless, this is not the most common topology for measuring the error of approximation to stochastic PDE, and a more standard criterion would be the supremum norm in $L^2(0,1)$, i.e., $\sup_{k\in \{0,\ldots , 2^M\}} \norm{Y-Y^{M,N}}_{\cb}$. As a particular consequence, it seems difficult to decide by means of numerical observations whether the convergence rate (\ref{main-cont-fBm}) is optimal, owing to the high simulation cost of Hölder norms (not to mention the intricate treatment of the three parameters $H,\ga,\ga'$). In the context of standard rough equations, some progresses have recently been made by Friz and Riedel (\cite{friz-riedel}) concerning (optimal) convergence rates with respect to the supremum norm. We hope that their strategy can be adapted to the rough PDE setting and we plan to address this issue in a further publication.
\end{remark}

\begin{remark}
By keeping track of the constants exhibited at each step of the reasoning, one soon realizes that the almost sure estimate (\ref{main-cont-fBm}) cannot be turned into an $L^1$ estimate, i.e., the random variable $C_{\psi,B}$ is not integrable (see for instance the intermediate bound (\ref{ineq:contr-uni}) for the path $Y^{M,N}$). Note that such average estimates remain an open problem as soon as the Hurst index is strictly smaller than $1/2$, even for rough standard systems (see \cite{DNT}).
\end{remark}

\begin{remark}
According to \cite[Theorem 3.10]{RHE} and \cite[Theorem 2.11]{RHE-glo}, if one wants to interpret (\ref{equa-strong}) in its multidimensional (mild) version, i.e., for a $n$-dimensional Sturm-Liouville operator on $(0,1)^n$, then one must turn to $L^p$-spaces with $p>n$. In particular, one must leave a Hilbert background as soon as $n\geq 2$. This is why we have preferred to stick to the one-dimensional case (and so $p=2$), for which spectral properties of the operator are well-known and projection on finite-dimensional spaces is easily available. Nevertheless, it may be possible to adapt the space-discretization procedure to $L^p$-spaces by introducing e.g. wavelet bases, as in \cite{haus-banach}. It would then be necessary to use generalizations of the basic properties (\ref{projection})-(\ref{pp-2}), and this would also suppose to cope with the two additional parameters $p$ and $n$ throughout the reasoning.
\end{remark}

\begin{remark}\label{extension-rougher}
Our guess is that the strategy when $\ga \in (1/3,1/2]$ (Section \ref{sec-rough-case}) could be adapted to the case where $\ga \in (1/4,1/3]$, at the price of more intricate Taylor expansions and by resorting to a third-order scheme. As reported in \cite[Subsection 2.6]{RHE}, the situation where both $\ga < 1/4$ and $x$ generates a $k$-rough path with $k\geq 4$, is likely to raise additional issues as far as the space parameter $\ga'$ is concerned (when $x$ is a fBm, these hypotheses cannot cover the case where $H<1/4$ anyway, see \cite{CQ}). For the time being, it seems that the only approach able to deal with the condition $\ga <1/4$ in (\ref{equa-strong}) is the BSDE/viscosity-strategy initiated in \cite{diehl-friz}. At this point, we cannot guarantee that the latter (theoretical) treatment remains consistent with our space-time discretization methods, though.
\end{remark}

\

The paper is organized as follows. In Section \ref{settings-main-results}, we elaborate on the assumptions in order throughout our study and we introduce the two approximation schemes under consideration. Section \ref{sec-young-case} is devoted to the proof of Theorem \ref{main-result-young}. Only developments of order $1$ will be involved in this section, so that the scheme can be seen as an adapted version of the usual Euler scheme. In Section \ref{sec-rough-case}, we will handle the scheme associated with Theorem \ref{theo-rough} and which requires developments of order $2$, similarly to the well-known Milstein approximation for standard differential systems. Finally, Appendix A puts together some technical proofs that have been postponed for the sake of clarity, while Appendix B gives an insight into possible implementations of the algorithm in the fBm case.

\section{Settings and schemes}\label{settings-main-results}

\subsection{Assumptions}\label{subsec:gen-assump}
As in \cite{RHE-glo,RHE,GT}, we are interested in the mild formulation of the equation, namely
\begin{equation}\label{equa-mild}
y_t=S_t\psi+\int_0^t S_{t-u} \, dx^i_u \, f_i(y_u) \quad , \quad t\in [0,1] \ , \ \psi \in \cb,
\end{equation}
where $S$ stands for the semigroup generated by $A$. When $x$ is a piecewise differentiable path, the integral involved in (\ref{equa-mild}) is naturally understood as 
$$\int_0^t S_{t-u} \, dx^i_u \, f_i(y_u)=\int_0^t S_{t-u} \, (du \, (x^i)'_u ) \, f_i(y_u).$$
In this context, interpreting the rough version of (\ref{equa-mild}) means extending the ordinary solution $y$ as $x$ tends (in a sense to be precised) to a $\ga$-Hölder path with $\ga <1$ (here $\ga >1/3$, see Remark \ref{extension-rougher}). Let us introduce the topologies that must come into the picture during this extension procedure. First, for any subinterval $I\subset [0,1]$ and any Banach space $V$, we denote by $\cac_1^\ka(I;V)$ ($\ka \in (0,1)$) the set of $\ka$-Hölder paths, endowed with the seminorm
$$\cn[x;\cac_1^\ka(I;V)]:=\sup_{s<t\in I}\frac{|x_t-x_s|_V}{\lln t-s \rrn^\ka}.$$
One also considers the sets $\cac_2^\ka(I;V)$ ($\ka \geq 0$) of two-parameter paths $z$ on $I^2$ (with values in $V$) which satisfy
$$\cn[z;\cac_2^\ka(I;V)]:=\sup_{s<t \in I}\frac{|z_{ts}|_V}{\lln t-s \rrn^\ka}.$$

\smallskip

Now, depending on the Hölder-regularity $\ga$ of $x$, we will be led to assume that one of the two following assumptions is satisfied.

\

\textbf{Assumption (X1)$_\ga$:} $x:[0,1] \to \R^m$ is both a $\ga$-Hölder path and a geometric $1$-rough path. In other words, there exists a sequence of piecewise differentiable path $(x^M)$ such that
$$\norm{x-x^M}_\ga:=\cn[x-x^M;\cac_1^\ga([0,1];\R^m)] \stackrel{M\to \infty}{\longrightarrow} 0.$$

\

\textbf{Assumption (X2)$_\ga$:} $x:[0,1] \to \R^m$ is a $\ga$-Hölder path which gives rise to a geometric $2$-rough path. In other words, there exists a sequence of piecewise differentiable path $(x^M)$ such that $\cn[x-x^M;\cac_1^\ga([0,1];\R^m)] \stackrel{M\to \infty}{\longrightarrow} 0$ and the sequence $(\xrgh^{\bt,M})$ of Lévy areas associated with $(x^M)$, i.e.,
$$\xrgh^{\bt,M,ij}_{ts}:=\int_s^t dx^{M,i}_u \, (x^{M,j}_u-x^{M,j}_s)  , \quad i,j=1,\ldots,m, \quad s<t\in [0,1],$$
converges in $\cac_2^{2\ga}([0,1];\R^{m,m})$ to an element $\xrgh^\bt$.
In brief,
$$\norm{\xrgh-\xrgh^{2^M}}_\ga:=\cn[x-x^M;\cac_1^\ga([0,1];\R^m)] +\cn[\xrgh^{\bt,M}-\xrgh^\bt;\cac_2^{2\ga}([0,1];\R^{m,m})]\stackrel{M\to \infty}{\longrightarrow} 0.$$

\

\textbf{Example:} As pointed out in the introduction, the main process that we have in mind in this paper is the ($m$-dimensional) fractional Brownian motion $x=B^H$ with Hurst index $H>1/3$. It has been indeed proved in \cite{CQ} that this process satisfies Assumption \textbf{(X2)$_\ga$} (and accordingly Assumption \textbf{(X1)$_\ga$}) for any $1/3<\ga <H$, when taking for $x^M$ the linear interpolation of $x$ with uniform mesh $\frac{1}{M}$, i.e.,
\begin{equation}\label{lin-inter}
t_k=t_k^M:=\frac{k}{M} \quad , \quad x^M_t:=x_{t_k}+M\cdot (t-t_k) \cdot (x_{t_{k+1}}-x_{t_k}) \quad \text{if} \ t\in [t_k,t_{k+1}).
\end{equation}
To be more specific, the following bound has been proved in \cite{DNT}, and it allows us to derive Corollary \ref{cor-intro-fbm} from Theorem \ref{theo-rough}.
\begin{proposition}\label{bound-rough-fbm}
Let $x=B$ be a $m$-dimensional fBm with Hurst index $H>1/3$, and let $B^M$ be its linear interpolation with mesh $1/M$. Then, for any $1/3<\ga <H$, there exists an almost surely finite random variable $C_\ga$ such that 
$$\norm{\xrgh-\xrgh^M}_\ga  \leq C_\ga  \sqrt{\log M} \cdot M^{\ga-H}.$$ 
\end{proposition}

\smallskip

\noindent
Note that Condition \textbf{(X1)$_\ga$} or \textbf{(X2)$_\ga$} is actually fulfilled by a larger class of Gaussian processes, as reported in \cite{FVbook}.

\

As far as the regularity of the vector field $f$ is concerned, it will be governed by one of the following conditions ($k$ is a parameter in $\N$).

\

\textbf{Assumption (F)$_k$:} for every $i\in \{1,\ldots,m\}$, $f_i$ belongs to the space $\cac^{k,\textbf{b}}(\R;\R)$ of $k$-time differentiable functions, bounded, with bounded derivatives.

\subsection{Schemes}

Remember that we have fixed an orthonormal basis $(e_n)$ made of eigenfunctions of $A$. For any $N\in \N^\ast$, we denote by $P_N$ the projection operator onto the finite-dimensional subspace $V_N:= \text{Span}\lcl e_n, \ 1\leq n \leq N \rcl$. 

\smallskip

In order to introduce the two schemes associated with Theorems \ref{main-result-young} and \ref{theo-rough}, let us first define, for any piecewise differentiable path $\xti:[0,1]\to \R^m$, the following operator-valued paths: for $i,j=1,\ldots,m$ and $s<t\in [0,1]$,
\begin{equation}\label{def-reg-x}
X^{\xti,i}_{ts}:=\int_s^t S_{t-u} \, d\xti^i_u \quad , \quad X^{\xti\xti,ij}_{ts}:=\int_s^t S_{t-u} \, d\xti^i_u \, (\xti^j_u-\xti^j_s),
\end{equation}

\smallskip

\noindent
We suppose in addition that either Assumption \textbf{(X1)$_\ga$} or Assumption \textbf{(X2)$_\ga$} is satisfied for some parameter $\ga\in (0,1)$ and some fixed regularizing sequence $(x^M)$, and that Assumption $\textbf{(F)$_1$}$ holds true (in particular, $f_i'$ is well-defined). 

\

\textbf{Euler scheme:} For $M,N \in \N$, $y^{M,N}_0=P_N \psi$ and
\begin{equation}\label{euler-scheme}
y^{M,N}_{t_{k+1}} =S_{t_{k+1}-t_k} y^{M,N}_{t_k}+ X^{x^M,i}_{t_{k+1}t_k} P_N f_i(y^{M,N}_{t_k}),
\end{equation}
where, for every $k\in \{0,\ldots,M\}$, $t_k=t_k^M:=\frac{k}{M}$.

\

\textbf{Milstein scheme:} For $M,N \in \N$, $y^{M,N}_0=P_N \psi$ and
\begin{equation}\label{milstein-scheme}
y^{M,N}_{t_{k+1}} =S_{t_{k+1}-t_k} y^{M,N}_{t_k}+ X^{x^{2^M},i}_{t_{k+1}t_k} P_N f_i(y^{M,N}_{t_k})+X^{x^{2^M} x^{2^M},ij}_{t_{k+1}t_k} P_N\lp f_i'(y^{M,N}_{t_k}) \cdot (P_N f_j(y^{M,N}_{t_k})) \rp,
\end{equation}
where, for every $k\in \{ 0,\ldots,2^M\}$, $t_k=t_k^M:=\frac{k}{2^M}$, and the notation $\phi \cdot \psi$ stands for the pointwise product of functions, i.e., $(\phi \cdot \psi)(\xi):=\phi(\xi) \psi(\xi)$.

\

\begin{remark}
The two schemes are of course named after the classical algorithms for standard stochastic differential equations (see \cite{klo-pla}). 
\end{remark}

\begin{remark}
When $x^M$ is the linear interpolation of $x$ given by (\ref{lin-inter}), the two sequences of operators $X^{x^M,i}_{t_{k+1}t_k},X^{x^Mx^M,ij}_{t_{k+1}t_k}$ reduce to
\begin{equation}
X^{x^M,i}_{t_{k+1}t_k}=M \cdot (x^i_{t_{k+1}}-x^i_{t_k}) \cdot \int_{t_k}^{t_{k+1}} S_{t_{k+1}-u} \, du
\end{equation}
\begin{equation}
X^{x^M x^M,ij}_{t_{k+1}t_k}=M^2 \cdot (x^i_{t_{k+1}}-x^i_{t_k}) \cdot (x^j_{t_{k+1}}-x^j_{t_k}) \cdot \int_{t_k}^{t_{k+1}} S_{t_{k+1}-u} \, du \, (u-t_k).
\end{equation}
Consequently, in this case, the computations of Formulas (\ref{euler-scheme}) and (\ref{milstein-scheme}) only require the a priori knowledge of the successive increments $x_{t_{k+1}}-x_{t_k}$, which makes the implementation of the algorithms very easy, as we will see in Appendix B for the fBm case.
\end{remark}

\subsection{Fractional Sobolev spaces}\label{section-cadre}

In order to compensate for the lack of time regularity in the integral involved in (\ref{equa-mild}), we shall take advantage of the space regularity of the initial condition (and then the solution itself). This is a classical strategy for (Wiener) SPDE (see \cite{daprato-zab}), which traditionally appeals to the so-called fractional Sobolev spaces.

\begin{notation}
For any $\ka \geq 0$, we denote by $\cb_\ka$ the fractional Sobolev space associated with $(-A)^\ka$ and characterized by
\begin{equation}\label{sobo-char}
\cb_\ka =\{ y \in L^2(0,1): \ \sum_{n=1}^\infty \la_n^{2\ka} (y^n)^2 < \infty \},
\end{equation}
where the $(y^n)$ are the components of $y$ in the basis $(e_n)$. This space is naturally equipped with the norm
\begin{equation}\label{norme-b-ka}
\norm{y}_{\cb_\ka}^2=\norm{(-A)^\ka y}_\cb^2=\sum_{n=1}^\infty \la_n^{2\ka}(y^n)^2,
\end{equation}
and we extend the definition of $\cb_\ka$ to any $\ka <0$ through the characterization formula (\ref{sobo-char}).
\end{notation}

As reported in \cite[Section 2.1]{RHE-glo}, the fractional Sobolev spaces $\cb_\ka$ coincide here with the classical Triebel-Lizorkin spaces $F^{2\ka}_{2,2}$, which are described and extensively studied in \cite{run-sick} (for instance). Let us draw up a list of some of the properties of these spaces, which will be used throughout our reasoning.

\smallskip

\begin{list}{\labelitemi}{\leftmargin=1em \itemsep=1em}
\item \textit{Sobolev inclusions}: If $\ka >1/4$, $\cb_\ka$ is a Banach algebra with respect to pointwise multiplication, i.e., $\| \vp \cdot \psi\|_{\cb_\ka} \leq \| \vp \|_{\cb_\ka} \| \psi\|_{\cb_\ka}$ for all $\vp, \psi \in \cb_\ka$. Moreover, it is continuously included in the space $L^\infty([0,1])$ of bounded functions on $[0,1]$.
\item \textit{Projection}: For all $0\leq \ka < \al$ and for any $\vp \in \cb_\al$, 
\begin{equation}\label{projection}
\norm{\vp-P_N \vp}_{\cb_\ka} \leq \la_N^{-(\al-\ka)} \norm{\vp}_{\cb_\al}.
\end{equation}
\item \textit{Contraction}: For any $\ka \geq 0$, $S$ is a contraction operator on $\cb_\ka$.
\item \textit{Regularization}: For any $t>0$ and for all $-\infty < \ka < \al < \infty$, $S_t$ sends $\cb_\ka$ into $\cb_\al$ and one has
\begin{equation}\label{regularisation-semigroupe}
\norm{S_t \vp}_{\cb_\al} \leq c_{\al,\ka} t^{-(\al-\ka)} \norm{\vp}_{\cb_\ka}.
\end{equation} 
\item \textit{Hölder regularity}: For all $t>0,\al>0$ and for any $\vp \in \cb_\al$, one has
\begin{equation}\label{hol-reg}
\norm{S_t \vp-\vp}_{\cb} \leq c_\al t^\al \norm{\vp}_{\cb_\al} \quad , \quad \norm{A S_t \vp}_{\cb} \leq c_\al t^{-1+\al} \norm{\vp}_{\cb_\al}.
\end{equation}
\item \textit{Composition} (see \cite{sickel}): if $\ka \in [0,1/2]$ and $f\in \cac^{1,\textbf{b}}(\R;\R)$ (see Assumption \textbf{(F)$_k$} for the latter notation), then
\begin{equation}\label{contr-f-y-sobo}
\norm{f(\vp)}_{\cb_\ka} \leq c_{\ka,f} \lcl 1+\norm{\vp}_{\cb_\ka} \rcl,
\end{equation}
while if $\ka \in (1/2,1)$ and $f\in \cac^{2,\textbf{b}}(\R;\R)$, one has
\begin{equation}
\norm{f(\vp)}_{\cb_\ka} \leq c_{\ka,f} \lcl 1+\norm{\vp}_{\cb_\ka}^2 \rcl,
\end{equation}
where, in both cases, $f(\vp)$ is understood in the sense of composition, i.e., $f(\vp)(\xi):=f(\vp(\xi))$.
\item \textit{Pointwise product} (see \cite[Sections 4.6.4 and 4.6.1]{run-sick}): if $\ka \in [0,1/2]$ and $\vp,\psi \in \cb_\ka \cap L^\infty([0,1])$, then
\begin{equation}\label{pp-1}
\norm{\vp \cdot \psi}_{\cb_\ka} \leq c_\ka \lcl \norm{\vp}_{L^\infty}\norm{\psi}_{\cb_\ka}+\norm{\vp}_{\cb_\ka}\norm{\psi}_{L^\infty} \rcl,
\end{equation}
while if $\vp\in \cb_{-\ka}$, $\psi\in \cb_\al$, with $\ka\geq 0$ and $\al >\max(\ka,\frac{1}{4})$, one has
\begin{equation}\label{pp-2}
\norm{\vp \cdot \psi}_{\cb_{-\ka}} \leq c_{\ka,\al} \norm{\vp}_{\cb_{-\ka}} \norm{\psi}_{\cb_\al}.
\end{equation}
\end{list}

\subsection{Tools of algebraic integration}\label{subsec:tools}

With the above properties in hand, let us recall that the rough paths treatment of (\ref{equa-mild}) (as it is developed in \cite{GT,RHE,RHE-glo}) is based on the controlled expansion of the convolutional integral
\begin{equation}\label{conv-in} 
\int_s^t S_{t-u} \, dx^i_u \, f_i(y_u).
\end{equation}
In order to express this control with the highest accuracy, we provide ourselves with a few tools inspired by the algebraic integration theory for standard systems (see \cite{gubi}).

\smallskip

\noindent
\textbf{Notation.} For $k\in \{1,2,3\}$ and for any interval $I\subset [0,1]$, set
$$\cs_k(I):=\{(t_1,\ldots,t_k) \in I^k: \ t_1 \geq \ldots \geq t_k \}.$$
Then for all paths $y:I \to \cb$ and $z:\cs_2(I) \to \cb$,  we define, if $s\leq u\leq t \in I$,
\begin{equation}\label{incr-std}
(\der y)_{ts}:=y_t-y_s \quad , \quad (\delha y)_{ts}:=(\der y)_{ts}-a_{ts}y_s,
\end{equation}
\begin{equation}\label{incr-twis}
(\delha z)_{tus}:=z_{ts}-z_{tu}-S_{t-u}z_{us},
\end{equation}
where $a_{ts}:=S_{t-s}-\Id$.

\smallskip

\noindent
To give an idea on how these operators arise from Equation (\ref{equa-mild}), let us observe that due to the additivity property $S_{t+t'}=S_t S_{t'}$, the variations of the (ordinary) solution $y$ are governed by the equation
$$(\der y)_{ts}=\int_s^t S_{t-u} \, dx^i_u \, f_i(y_u)+a_{ts}\int_0^s S_{s-u} \, dx^i_u \, f_i(y_u)=\int_s^t S_{t-u} \, dx^i_u \, f_i(y_u)+a_{ts}y_s,$$
and (\ref{equa-mild}) can thus be equivalently written as
\begin{equation}\label{eq-base}
y_0=\psi \quad , \quad (\delha y)_{ts}=\int_s^t S_{t-u} \, dx^i_u \, f_i(y_u).
\end{equation}
In this convolutional context, let us also observe the following elementary properties, that we label for a further use:
\begin{proposition}\label{prop:tele}
Let $y:[0,1] \to \cb$, $z:\cs_2([0,1]) \to \cb$, and let $x:[0,1] \to \R$ be a differentiable path. Then it holds:
\begin{itemize}
\item Telescopic sum: $[ \delha (\delha y)]_{tus}=0$ and $(\delha y)_{ts}=\sum_{i=0}^{n-1} S_{t-t_{i+1}}(\delha y)_{t_{i+1}t_i}$ for any partition $\{s=t_0 <t_1 <\ldots <t_n=t\}$ of an interval $[s,t]$ of $[0,1]$.
\item Chasles relation: if $\cj_{ts}:=\int_s^t S_{t-u} \, dx_u \, y_u$, then $\delha \cj=0$.
\item Cohomology: if $\delha z=0$, then there exists $h:[0,1] \to \cb$ such that $\delha h=z$.
\end{itemize}
\end{proposition}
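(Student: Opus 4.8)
\textbf{Proof plan for Proposition \ref{prop:tele}.}

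The plan is to verify the three items by direct computation from the definitions \eqref{incr-std}--\eqref{incr-twis}, keeping in mind the semigroup identity $S_{tu}=S_{t\theta}S_{\theta u}$ for $t\geq \theta\geq u$ and the associated relation $a_{tu}=a_{t\theta}+S_{t\theta}a_{\theta u}$, which follows from $a_{ts}=S_{ts}-\Id$ and will be the algebraic backbone of the argument.

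For the telescopic sum, I would first expand $\delha(\delha y)_{tus}=(\delha y)_{ts}-(\delha y)_{tu}-S_{tu}(\delha y)_{us}$ using \eqref{incr-std}: the increments $(\der y)_{ts}-(\der y)_{tu}-S_{tu}(\der y)_{us}$ simplify to $y_t-y_u-S_{tu}(y_u-y_s)$, and collecting the $-a_{ts}y_s$, $+a_{tu}y_u$, $+S_{tu}a_{us}y_s$ terms and using $a_{ts}=a_{tu}+S_{tu}a_{us}$ shows everything cancels, giving $\delha(\delha y)=0$. For the summation formula, I would argue by induction on $n$, or more directly note that $\delha(\delha y)_{t t_{i+1} t_i}=0$ rearranges to $(\delha y)_{t t_i}=(\delha y)_{t t_{i+1}}+S_{t t_{i+1}}(\delha y)_{t_{i+1} t_i}$; iterating over $i$ from $n-1$ down to $0$ and using $(\delha y)_{t t_n}=(\delha y)_{tt}=0$ yields $(\delha y)_{ts}=\sum_{i=0}^{n-1}S_{t t_{i+1}}(\delha y)_{t_{i+1}t_i}$.

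For the Chasles relation, I would compute $(\delha \cj)_{tus}=\cj_{ts}-\cj_{tu}-S_{tu}\cj_{us}$ directly from $\cj_{ts}=\int_s^t S_{tu} \, dx_u \, y_u$: splitting $\int_s^t=\int_u^t+\int_s^u$ gives $\cj_{ts}=\int_u^t S_{tv}\,dx_v\,y_v+\int_s^u S_{tv}\,dx_v\,y_v$, and since $S_{tv}=S_{tu}S_{uv}$ on the second integral, the second piece equals $S_{tu}\cj_{us}$ while the first equals $\cj_{tu}$, so $(\delha\cj)_{tus}=0$. For the cohomology statement, given $z:\cs_2\to\cb$ with $\delha z=0$, I would simply set $h_t:=z_{t0}$ (with $h_0=z_{00}=0$) and check $(\delha h)_{ts}=h_t-h_s-a_{ts}h_s=z_{t0}-z_{s0}-a_{ts}z_{s0}$; the vanishing of $(\delha z)_{ts0}=z_{t0}-z_{ts}-S_{ts}z_{s0}$ gives $z_{t0}=z_{ts}+S_{ts}z_{s0}$, hence $z_{t0}-z_{s0}-a_{ts}z_{s0}=z_{ts}+S_{ts}z_{s0}-z_{s0}-(S_{ts}-\Id)z_{s0}=z_{ts}$, as desired.

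None of the steps presents a genuine obstacle; the only point requiring care is the consistent use of the cocycle identity $a_{ts}=a_{tu}+S_{tu}a_{us}$ and the semigroup property when splitting integrals, so that the twisted increment $\delha$ (rather than the plain $\der$) interacts correctly with the convolution by $S$. I would also remark that the cohomology statement is the convolutional analogue of the classical fact that a closed $1$-increment is exact, and that the choice $h_t=z_{t0}$ is the natural one, uniqueness up to an element of the kernel of $\delha$ (i.e. up to $t\mapsto S_t c$) being immediate but not needed here.
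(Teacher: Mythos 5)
Your proof is correct and is exactly the direct verification the paper intends (Proposition \ref{prop:tele} is stated there without proof, as an elementary fact of the convolutional algebraic-integration framework): the cocycle identity $a_{ts}=a_{tu}+S_{tu}a_{us}$, the splitting $S_{tv}=S_{tu}S_{uv}$ of the integral for the Chasles relation, and the choice $h_t:=z_{t0}$ are the standard arguments. One harmless slip: $(\der y)_{ts}-(\der y)_{tu}-S_{tu}(\der y)_{us}$ equals $(\Id-S_{tu})(y_u-y_s)$, not $y_t-y_u-S_{tu}(y_u-y_s)$; the cancellation you claim still goes through, and is even more transparent once one notes that $(\delha y)_{ts}=y_t-S_{ts}y_s$.
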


\smallskip

\noindent
On top of these algebraic considerations, if one wants to measure the regularity of the terms involved in the expansion of $\int_s^t S_{t-u} \, dx^i_u \, f_i(y_u)$, one is led to introduce the following suitable semi-norms, that can be seen as generalizations of the classical Hölder norm: if $y:I \to V$, $z:\cs_2(I) \to V$ and $h:\cs_3(I) \to V$, where $I\subset [0,1]$ and $V$ is any Banach space, we define, for any $\la >0$,
\begin{equation}
\cn[y;\cacha_1^\la(I;V)] :=\sup_{ s<t \in I} \frac{\norm{(\delha y)_{ts}}_V}{\lln t-s \rrn^\la} \quad , \quad \cn[y;\cac_1^0(I;V)]:=\sup_{t\in I}\,  \norm{y_t}_V,
\end{equation}
\begin{equation}
\cn[z;\cac_2^\la(I;V)]:=\sup_{ s <t \in I} \frac{\norm{z_{ts}}_V}{\lln t-s \rrn^\la} \quad , \quad \cn[h;\cac_3^\la(I;V)]:=\sup_{ s <u<t\in I} \frac{\norm{h_{tus}}_V}{\lln t-s \rrn^\la}. 
\end{equation}
Then $\cacha_1^\la(I;V)$ naturally stands for the set of paths $y:I \to V$ such that $\cn[y;\cacha_1^\la(I;V)]<\infty$, and we define $\cac_1^0(I;V)$, $\cac_2^\la(I;V)$ and $\cac_3^\la(I;V)$ along the same line. With this notation, observe for instance that if $y\in \cac_2^\la(I;\cl(V,W))$ and $z\in \cac_2^\be(I;V)$, then the path $h$ defined as $h_{tus}:=y_{tu}z_{us}$ ($s\leq u\leq t$) belongs to $\cac_3^{\la+\be}(I;W)$.

\smallskip

\noindent
When $I=[0,1]$, we will more simply write $\cac_k^\la(V):=\cac_k^\la(I;V)$.

\smallskip

\noindent
The following notational convention also turns out to be useful as soon as products of paths come into play:
\begin{notation}\label{convention-indices}
If $g:\cs_n \to \cl(V,W)$ and $h:\cs_m \to V$, then the product $gh: \cs_{n+m-1} \to W$ is defined by the formula
$$(gh)_{t_1 \ldots t_{m+n-1}}:=g_{t_1 \ldots t_n}h_{t_n \ldots t_{n+m-1}}.$$
\end{notation}

\noindent
With this convention, it is readily checked that if $g: \cs_2 \to \cl(\cb_\ka,\cb_\al)$ and $h:\cs_n \to \cb_\ka$, then $\delha(gh): \cs_{n+1} \to\cb_\al$ is given by
\begin{equation}\label{rel-alg-prod}
\delha (gh)=(\delha g)h-g(\der h).
\end{equation}

\smallskip

\noindent
To end up with this toolbox, let us report one of the cornerstone results of \cite{GT}, namely the existence of (some kind of) an inverse operator for $\delha$, denoted by $\Laha$, and which will play an important role in the reasoning of Section \ref{sec-rough-case}. In brief, let us say that this operator allows us to get both a nice expression and a sharp control for the \emph{smooth} terms, i.e., the terms with Hölder regularity greater than $1$, arising from the expansion of the rough integral $\int_s^t S_{t-u} \, dx^i_u \, f_i(y_u)$ (see e.g. Lemma \ref{lem:dec-smoo}).

\begin{theorem}\label{existence-laha}
Fix an interval $I\subset [0,1]$, a parameter $\ka \geq 0$ and let $\mu >1$. For any $h\in \cac_3^\mu(I;\cb_{\ka})\cap \text{Im} \, \delha$, there exists a unique element $$\Laha h \in \cap_{\al\in [0,\mu)} \cac_2^{\mu-\al}(I;\cb_{\ka+\al})$$ such that $\delha(\Laha h)=h$. Moreover, $\Laha h$ satisfies the following contraction property: for all $\al\in [0,\mu)$,
\begin{equation}\label{contraction-laha}
\cn[\Laha h;\cac_2^{\mu-\al}(I;\cb_{\ka+\al})] \leq c_{\al,\mu} \, \cn[h;\cac_3^\mu(I;\cb_{\ka})].
\end{equation}
\end{theorem}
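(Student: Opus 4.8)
The plan is to reduce this "sewing/Λ-map" statement in the convolutional (semigroup-twisted) setting to the classical Young–Gubinelli sewing lemma, by exploiting the cohomology structure of $\delha$. The key observation is that since $h \in \text{Im}\,\delha$, we can write $h = \delha g$ for some $g: \cs_2 \to \cb_\ka$; uniqueness of $\Laha h$ will come from the fact that any two candidates differ by an element in $\ker \delha$ that is also $\mu$-Hölder with $\mu>1$, which (by the telescopic-sum / cohomology part of Proposition~\ref{prop:tele} together with a Hölder-exponent argument) must vanish. Concretely, first I would establish the uniqueness: if $z^{(1)}, z^{(2)}$ both satisfy $\delha z^{(i)} = h$ and both lie in $\cac_2^{\mu-\al}(I;\cb_{\ka+\al})$, then $w := z^{(1)} - z^{(2)}$ satisfies $\delha w = 0$, hence by the cohomology statement $w = \delha \phi$ for some $\phi:[0,1]\to\cb_\ka$; then on any partition of $[s,t]$ with mesh tending to zero, the telescopic identity $w_{ts} = \sum_i S_{tt_{i+1}} w_{t_{i+1}t_i}$ combined with $\|w_{t_{i+1}t_i}\| \lesssim |t_{i+1}-t_i|^\mu$ and contractivity of $S$ on $\cb_\ka$ forces $\|w_{ts}\| \lesssim |t-s|^{\mu-1}(\text{mesh})\to 0$, so $w\equiv 0$.

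For existence, I would construct $\Laha h$ as the limit of Riemann-type sums. Given a partition $\pi = \{s = u_0 < u_1 < \cdots < u_n = t\}$ of $[s,t]$, set $J^\pi_{ts} := \sum_{i=0}^{n-1} S_{t u_{i+1}} g_{u_{i+1} u_i}$ and show, using the algebraic relation $(\delha g)_{tus} = g_{ts} - g_{tu} - S_{tu}g_{us} = h_{tus}$, that refining the partition by inserting one point changes $J^\pi$ by a term controlled by $\cn[h;\cac_3^\mu]\,|u_{i+1}-u_i|^\mu$; summing over insertions and iterating the refinement gives a Cauchy estimate of the form $\|J^{\pi'}_{ts} - J^\pi_{ts}\|_{\cb_\ka} \leq c_\mu\,\zeta(\mu)\,\cn[h;\cac_3^\mu]\,|t-s|^\mu$ along any refining sequence of partitions whose mesh goes to $0$, where the contractivity of $S$ on $\cb_\ka$ is used at each step to absorb the semigroup factors. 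The limit $\Laha h_{ts} := \lim_{|\pi|\to 0} J^\pi_{ts}$ then exists in $\cb_\ka$, is independent of the refining sequence, and satisfies $\delha(\Laha h) = h$ by passing to the limit in the (approximate) algebraic identity; the bound $\cn[\Laha h;\cac_2^\mu(I;\cb_\ka)] \leq c_\mu\,\cn[h;\cac_3^\mu(I;\cb_\ka)]$ is read off from the Cauchy estimate at $\pi = \{s,t\}$.

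It remains to upgrade from the base case $\al = 0$ to all $\al\in[0,\mu)$, i.e. to show $\Laha h \in \cac_2^{\mu-\al}(I;\cb_{\ka+\al})$ with the quantitative bound~\eqref{contraction-laha}. Here I would use the regularization property~\eqref{regularisation-semigroupe}: in the defining sum $J^\pi_{ts} = \sum_i S_{tu_{i+1}} g_{u_{i+1}u_i}$, the factor $S_{t u_{i+1}}$ gains $\al$ derivatives at the cost of $|t - u_{i+1}|^{-\al}$, and a careful summation — splitting the partition points according to whether $u_{i+1}$ is close to $t$ or not, and using $\sum |u_{i+1}-u_i|^\mu |t-u_{i+1}|^{-\al} \lesssim |t-s|^{\mu-\al}$ for $\mu > \al$ — yields $\|\Laha h_{ts}\|_{\cb_{\ka+\al}} \leq c_{\al,\mu}\,\cn[h;\cac_3^\mu(I;\cb_\ka)]\,|t-s|^{\mu-\al}$. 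I expect this last summation — juggling the two competing factors $|u_{i+1}-u_i|^\mu$ (wanting a fine partition) and $|t-u_{i+1}|^{-\al}$ (singular near the right endpoint) and getting the clean exponent $\mu-\al$ — to be the main technical obstacle, though it is standard in the algebraic-integration literature and can also be quoted essentially verbatim from \cite{GT}. Since the statement explicitly attributes the result to \cite{GT}, an acceptable alternative is simply to cite it there; the above is the self-contained argument one would write out.
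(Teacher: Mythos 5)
The paper offers no proof of this theorem: it is imported verbatim from \cite{GT} (``let us report one of the cornerstone results of \cite{GT}''), so the only benchmark is the standard sewing argument given there, which is indeed the route you outline. Your uniqueness step is correct (modulo a transposed exponent: the telescopic bound is $\lln t-s\rrn\cdot|\pi|^{\mu-1}$ rather than $\lln t-s\rrn^{\mu-1}|\pi|$, which still tends to $0$ since $\mu>1$, and the detour through the cohomology statement is unnecessary), and the insertion/removal Cauchy estimate together with the use of (\ref{regularisation-semigroupe}) to trade $\al$ degrees of spatial regularity against a factor $\lln t-u_{i+1}\rrn^{-\al}$ are exactly the ingredients of \cite{GT}.

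There is, however, a genuine error in the existence step as written: the object you define, $\Laha h_{ts}:=\lim_{|\pi|\to 0}J^{\pi}_{ts}$ with $J^{\pi}_{ts}=\sum_i S_{tu_{i+1}}g_{u_{i+1}u_i}$, does \emph{not} satisfy $\delha(\Laha h)=h$. Splitting a partition of $[s,t]$ at an intermediate point $u$ gives $J^{\pi_1\cup\pi_2}_{ts}=J^{\pi_1}_{tu}+S_{tu}J^{\pi_2}_{us}$, so the limit $\cj_{ts}$ of your Riemann-type sums is additive, i.e. $\delha \cj=0$ (this is the Chasles relation of Proposition \ref{prop:tele}); it plays the role of the ``integral'', not of the remainder, and in general it is not $O(\lln t-s\rrn^{\mu})$ either, so the claimed H\"older bound for it would fail. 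The correct definition is $\Laha h:=g-\cj$: then $\delha(\Laha h)=\delha g-\delha \cj=h$, and the estimate $\norm{(\Laha h)_{ts}}_{\cb_{\ka}}\leq c_{\mu}\,\cn[h;\cac_3^\mu(I;\cb_\ka)]\,\lln t-s\rrn^{\mu}$ is precisely your Cauchy estimate comparing $\cj_{ts}$ with the trivial partition $\pi=\{s,t\}$, for which $J^{\pi}_{ts}=g_{ts}$. (Independence of the choice of $g$ is then automatic from your uniqueness step, or from noting that two admissible $g$'s differ by some $\delha\phi$, whose sums telescope exactly.) With this correction, the remaining delicate point is the $\cb_{\ka+\al}$ estimate, where insertions adjacent to the right endpoint produce terms with no smoothing factor and force a specific (e.g.\ dyadic) choice of refinements; you rightly single this out as the technical heart and propose to quote \cite{GT} for it, which is consistent with what the paper itself does.
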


\section{Young case}\label{sec-young-case}

This section is devoted to the proof of Theorem \ref{main-result-young}. Consequently, we fix from now on the two parameters $\ga \in (\frac{1}{2},1)$ and $\ga'\in (\max(\frac{1}{4},1-\ga),\frac{1}{2})$, as well as the initial condition $\psi \in \cb_{\ga'}$. We also fix the approximating sequence $(x^M)$ of $x$ given by Assumption \textbf{(X1)$_\ga$}.

\smallskip

The first point to elaborate on here is that under both Assumptions \textbf{(X1)$_\ga$} and \textbf{(F)$_2$}, the convolution integral (\ref{conv-in}) can be extended to a path $x\in \cac_1^\ga(\R^m)$ via a first-order expansion. To this end, the strategy is based on the following elementary decomposition.

\begin{lemma}\label{lem:dec-smoo}
If $\xti:[0,1]\to \R$ and $z:[0,1]\to \cb$ are both piecewise continuously differentiable paths, then the following decomposition is in order:
\begin{equation}\label{dec-you-regu}
\int_s^t S_{t-u} \, d\xti^i_u \, z_u=X^{\xti,i}_{ts} z_s+\Laha_{ts}(X^{\xti,i} \der z),
\end{equation} 
where $X^{\xti,i}$ is the operator-valued path defined by (\ref{def-reg-x}).
\end{lemma}

\begin{proof}
Set $J_{ts}:=\int_s^t S_{t-u} \, d\xti^i_u \, z_u-X^{\xti,i}_{ts} z_s=\int_s^t S_{t-u} \, d\xti^i_u \, (\der z)_{us}$. Owing to the regularity of $\xti$ and $z$, it is clear that $J\in \cac_2^2(\cb)$. Moreover, one has $\delha (J-\Laha (X^{\xti} \der z))=X^{\xti} \der z-X^{\xti} \der z=0$, so $J-\Laha(X^{\xti} \der z)=\delha h$ for some path $h\in \cac_2(\cb)$. According to Theorem \ref{existence-laha}, we know that $\Laha(X^{\xti} \der z) \in \cac_2^2(\cb)$ and hence $\delha h \in \cac_2^2(\cb)$, which easily entails $\delha h=0$ (use the telescopic-sum property of Proposition \ref{prop:tele}). 
\end{proof}

One can then rely on the following extension result for $X^x$:

\begin{lemma}[\cite{RHE}, Proposition 6.3]\label{lem-defi-x-x}
Under Assumption \textbf{(X1)$_\ga$}, the sequence of operator-valued paths 
$$X^{x^M,i}_{ts}:=\int_s^t S_{t-u} \, dx^{M,i}_u$$
converges to an element $X^{x,i}$ with respect to the topology of the spaces $\cac_2^{\ga-\la}(\cl(\cb_{\ka},\cb_{\ka+\la}))$ ($\la \in [0,\ga),\ka\in \R$) and $\cn[X^{x,i};\cac_2^{\ga-\la}(\cl(\cb_\ka,\cb_{\ka+\la}))] \leq c_{\ka,\la} \norm{x}_\ga$, as well as
\begin{equation}\label{cont-cons-x-x}
\cn[X^{x,i}-X^{x^M,i};\cac_2^{\ga-\la}(\cl(\cb_\ka,\cb_{\ka+\la}))] \leq c_{\ka,\la} \norm{x-x^M}_\ga.
\end{equation}
Moreover, $X^{x,i}$ commutes with the projection $P_N$ and it satisfies the algebraic relation $\delha X^{x,i}=0$. 
\end{lemma}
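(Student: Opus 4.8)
The plan is to build $X^{x,i}$ as the limit of the smooth approximations $X^{x^M,i}$, so the whole matter reduces to a uniform estimate $\cn[X^{\xti,i};\cac_2^{\ga-\la}(\cl(\cb_\ka,\cb_{\ka+\la}))]\leq c_{\ka,\la}\,\cn[\xti;\cac_1^\ga]$ holding for \emph{every} piecewise differentiable driver $\xti$. The key tool for this estimate is an integration-by-parts rewriting of the convolutional integral. Using $\tfrac{d}{du}S_{t-u}=-\Delta S_{t-u}$ and $\int_s^t\Delta S_{t-u}\,du=S_{t-s}-\Id$, one obtains, with $(\der\xti^i)_{vu}:=\xti^i_v-\xti^i_u$,
\begin{equation}\label{ibp-X-plan}
X^{\xti,i}_{ts}=\int_s^t S_{tu}\,d\xti^i_u=(\der\xti^i)_{ts}\,S_{t-s}-\int_s^t\Delta S_{t-u}\,(\der\xti^i)_{tu}\,du.
\end{equation}
The reason for grouping the terms this way — rather than as the more naive $X^{\xti,i}_{ts}=(\der\xti^i)_{ts}\,\Id+\int_s^t\Delta S_{t-u}\,(\der\xti^i)_{us}\,du$ — is that under the integral in (\ref{ibp-X-plan}) the scalar increment $(\der\xti^i)_{tu}$ decays like $\abs{t-u}^\ga$ as $u\to t$, which is exactly what is needed to absorb the singularity $\norm{\Delta S_{t-u}}_{\cl(\cb_\ka,\cb_{\ka+\la})}\leq c_\la(t-u)^{-1-\la}$, whereas the naive splitting leaves the factor $(\der\xti^i)_{us}$, which tends to $(\der\xti^i)_{ts}\neq0$ as $u\to t$, so that the integral diverges.

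Granting (\ref{ibp-X-plan}), the uniform estimate is a direct computation: for $\vp\in\cb_\ka$ the first term is controlled by the contraction/regularization properties of $S$ (yielding $c_\la\abs{t-s}^{-\la}\abs{(\der\xti^i)_{ts}}\,\norm{\vp}_{\cb_\ka}$), and the integral term by $\int_s^t(t-u)^{-1-\la}\abs{(\der\xti^i)_{tu}}\,du\leq c\,\cn[\xti;\cac_1^\ga]\,\norm{\vp}_{\cb_\ka}\int_s^t(t-u)^{\ga-1-\la}\,du$, the last integral being finite precisely because $\la<\ga$ and equal to $(\ga-\la)^{-1}\abs{t-s}^{\ga-\la}$. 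Since $\xti\mapsto X^{\xti,i}$ is linear, applying this to $x^M-x^{M'}$ gives $\cn[X^{x^M,i}-X^{x^{M'},i};\cac_2^{\ga-\la}(\cl(\cb_\ka,\cb_{\ka+\la}))]\leq c_{\ka,\la}\cn[x^M-x^{M'};\cac_1^\ga]\leq c_{\ka,\la}(u_M+u_{M'})$, so $(X^{x^M,i})_M$ is Cauchy in each of these Banach spaces. I would let $X^{x,i}$ be the common limit (independent of $(\ka,\la)$, since the various limits agree when tested against a smooth vector, where everything is classical), and then let $M'\to\infty$ in the last display and in the uniform estimate to get $\cn[X^{x,i};\cac_2^{\ga-\la}(\cl(\cb_\ka,\cb_{\ka+\la}))]\leq c_{\ka,\la}\norm{x}_\ga$ and (\ref{cont-cons-x-x}).

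For the two algebraic statements I would argue first at the level of the smooth approximations and then pass to the limit, both relations involving only operations continuous for the topologies above. The identity $\delha X^{x^M,i}=0$, i.e. $X^{x^M,i}_{ts}=X^{x^M,i}_{tu}+S_{t-u}X^{x^M,i}_{us}$ for $s\leq u\leq t$, is the Chasles relation of Proposition \ref{prop:tele} and follows from additivity of the integral and the semigroup law $S_{t-v}=S_{t-u}S_{u-v}$ ($v\leq u$). The commutation $X^{x^M,i}_{ts}P_N=P_N X^{x^M,i}_{ts}$ holds because $S_r$ and $\Delta$, being functions of $\Delta$, are diagonal in the eigenbasis $(e_n)$ and commute with the spectral projection $P_N$, while $\dot x^{M,i}_u$ is scalar; it can be read off from $X^{x^M,i}_{ts}=\int_s^t S_{t-u}\dot x^{M,i}_u\,du$ or from (\ref{ibp-X-plan}). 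Passing to the limit gives $\delha X^{x,i}=0$ and $X^{x,i}P_N=P_N X^{x,i}$. I expect the only genuinely delicate point to be the identity (\ref{ibp-X-plan}) together with the observation that its specific grouping is what renders the time singularity integrable; after that the argument is the standard completion of a Cauchy sequence, fed by the semigroup estimates of Subsection \ref{subsec:tools}.
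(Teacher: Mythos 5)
Your proof is correct: the integration-by-parts identity with increments anchored at $t$, namely $X^{\xti,i}_{ts}=(\der\xti^i)_{ts}\,S_{t-s}-\int_s^t\Delta S_{t-u}\,(\der\xti^i)_{tu}\,du$, combined with the regularization bound (\ref{regularisation-semigroupe}) (the factor $\lln t-u\rrn^{\ga}$ absorbing the singularity $\lln t-u\rrn^{-1-\la}$ precisely because $\la<\ga$), linearity in the driver, and passage to the limit for the algebraic relations, is exactly the mechanism that yields the lemma. The paper itself offers no proof (the statement is imported from \cite{RHE}), but the argument there is essentially the one you give: your identity is equivalent to the decomposition $X^{x,i}_{ts}=(\der x^i)_{ts}+X^{ax,i}_{ts}$ recorded in Lemma \ref{lem-prol-x-x-x}, with $X^{ax,i}$ estimated by the same anchored integration by parts, so your route coincides with the intended one.
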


\begin{remark}
The operator $X^{x,i}_{ts}$ morally behaves like $S_{t-s}(\der x^i)_{ts}$ as far as space-time regularity is concerned. The above control $\cn[X^{x,i};\cac_2^{\ga-\la}(\cl(\cb_\ka,\cb_{\ka+\la}))] \leq c_{\ka,\la} \norm{x}_\ga$ can thus be seen as a consequence of the regularizing property (\ref{regularisation-semigroupe}), since for any $\vp \in \cb_\ka$, one has
$$\norm{S_{t-s}(\vp) (\der x^i)_{ts}}_{\cb_{\ka+\la}} \leq c \lln t-s \rrn^{-\la} |(\der x^i)_{ts}| \norm{\vp}_{\cb_\ka}\leq c \lln t-s \rrn^{\ga-\la} \norm{x}_\ga \norm{\vp }_{\cb_\ka}.$$ 
\end{remark}

\begin{remark}
Through the continuity result (\ref{cont-cons-x-x}), one can see that the path $X^x$ only depends on $x$ and not on the particular approximating sequence $x^M$. This comment also holds for the forthcoming Lemma \ref{lem-prol-x-x-x}.
\end{remark}

Once endowed with $X^x$, it is readily checked that the right-hand-side of (\ref{dec-you-regu}) can also be extended to a class of non-differentiable paths $z$, which provides us with the expected interpretation:

\begin{proposition}[\cite{RHE}, Proposition 3.9]\label{defi-int-young}
Under Assumption \textbf{(X1)$_\ga$}, we define, for any path $z=(z^1,  \ldots ,z^m)$ such that $z^i \in \cac_1^0(\cb_\ka) \cap \cac_1^\ka(\cb)$ with $\ga +\ka >1$, the integral
\begin{equation}\label{decompo-int-young}
\cj_{ts}(\hat{d}x \, z):=X^{x,i}_{ts}z^i_s+\Laha_{ts}\lp X^{x,i} \der z^i\rp.
\end{equation}
Then:
\begin{itemize}
\item $\cj(\hat{d} x \, z)$ is well-defined via Theorem \ref{existence-laha}. It coincides with the Lebesgue integral $\int_s^t S_{t-u} \, dx^i_u \, z^i_u$ when $x$ is a piecewise differentiable path.
\item The following estimate holds true:
\begin{equation}\label{estimation-cas-young}
\cn[\cj(\hat{d} x \, z);\cac_2^\ga(\cb_{\ka})] \leq c \norm{x}_{\ga} \lcl \cn[ z;\cac_1^0(\cb_{\ka})]+\cn[z;\cac_1^\ka(\cb)] \rcl.
\end{equation}
\end{itemize}
\end{proposition}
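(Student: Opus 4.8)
The plan is to follow the classical Young-type sewing argument adapted to the convolutional setting, using Theorem \ref{existence-laha} as the analytic engine. First I would verify that the term $\Laha_{ts}(X^{x,i}\der z^i)$ is well-defined. For this I must check that $X^{x,i}\der z^i$ lies in $\cac_3^\mu(\cb_\ka)\cap\mathrm{Im}\,\delha$ for some $\mu>1$. The algebraic membership in $\mathrm{Im}\,\delha$ is immediate since, by Notation \ref{convention-indices} and the relation (\ref{rel-alg-prod}), one has $X^{x,i}\der z^i=\delha(\text{something})$ up to the term $(\delha X^{x,i})z^i$, which vanishes by the algebraic relation $\delha X^{x,i}=0$ from Lemma \ref{lem-defi-x-x}; more precisely $\delha(X^{x,i}z^i)=(\delha X^{x,i})z^i-X^{x,i}(\der z^i)=-X^{x,i}(\der z^i)$, so $h:=-\delha(X^{x,i}z^i)=X^{x,i}(\der z^i)\in\mathrm{Im}\,\delha$. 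For the regularity, I would write $(X^{x,i}\der z^i)_{tus}=X^{x,i}_{tu}(\der z^i)_{us}$ and estimate its $\cb_\ka$-norm: using $X^{x,i}_{tu}\in\cl(\cb,\cb_\ka)$ with $\cn[X^{x,i};\cac_2^{\ga-\ka}(\cl(\cb,\cb_\ka))]\leq c_\ka\norm{x}_\ga$ (taking the base space $\cb_0=\cb$ and $\la=\ka$ in Lemma \ref{lem-defi-x-x}, legitimate since $\ka<1/2<\ga$) together with $\norm{(\der z^i)_{us}}_\cb\leq\cn[z^i;\cac_1^\ka(\cb)]\abs{u-s}^\ka$, one gets the bound $\abs{t-u}^{\ga-\ka}\abs{u-s}^\ka\cn[X^{x,i};\cdot]\cn[z^i;\cac_1^\ka(\cb)]\leq c\norm{x}_\ga\cn[z^i;\cac_1^\ka(\cb)]\abs{t-s}^\ga$, so that $X^{x,i}\der z^i\in\cac_3^\ga(\cb_\ka)$ with $\ga>1$ by hypothesis. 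Theorem \ref{existence-laha} then applies with $\mu=\ga$ and produces $\Laha(X^{x,i}\der z^i)\in\cac_2^\ga(\cb_\ka)$.

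Next I would assemble the estimate (\ref{estimation-cas-young}). The first piece $X^{x,i}_{ts}z^i_s$ is controlled directly: $\norm{X^{x,i}_{ts}z^i_s}_{\cb_\ka}\leq\cn[X^{x,i};\cac_2^\ga(\cl(\cb_\ka,\cb_\ka))]\norm{z^i_s}_{\cb_\ka}\leq c\norm{x}_\ga\abs{t-s}^\ga\cn[z^i;\cac_1^0(\cb_\ka)]$, taking $\la=0$ in Lemma \ref{lem-defi-x-x}. The second piece is controlled by the contraction property (\ref{contraction-laha}) with $\al=0$: $\cn[\Laha(X^{x,i}\der z^i);\cac_2^\ga(\cb_\ka)]\leq c_\ga\cn[X^{x,i}\der z^i;\cac_3^\ga(\cb_\ka)]\leq c\norm{x}_\ga\cn[z^i;\cac_1^\ka(\cb)]$, using the bound from the previous paragraph. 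Summing over $i$ gives (\ref{estimation-cas-young}).

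Finally, for the consistency claim — that $\cj(\hat dx\,z)$ coincides with the Lebesgue integral $\int_s^t S_{tu}\,dx^i_u\,z^i_u$ when $x$ is piecewise differentiable — I would invoke the identity (\ref{dec-you-regu}), which was established just before the statement precisely for differentiable $z$, and then pass to the limit: for general $z$ as in the hypothesis, approximate $z$ by differentiable processes (or rather observe that (\ref{dec-you-regu}) holds and the right-hand side of (\ref{decompo-int-young}) is continuous in $z$ for the norm $\cn[\cdot;\cac_1^0(\cb_\ka)]+\cn[\cdot;\cac_1^\ka(\cb)]$ by the very estimate just proved), so both sides agree. I expect the main obstacle to be purely bookkeeping: making sure the exponents in the application of Lemma \ref{lem-defi-x-x} are in the admissible ranges (one needs $\la=0$ for the first term and $\la=\ka<\ga$ for the $\cac_3$-estimate, both fine since $\ka<1/2<\ga$) and confirming $\mu=\ga>1$ so that Theorem \ref{existence-laha} is applicable — there is no genuine analytic difficulty, the content is entirely in the preparatory lemmas.
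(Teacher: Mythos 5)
There is a genuine gap at the heart of your well-posedness step: you apply Theorem \ref{existence-laha} with $\mu=\ga$, but $\ga<1$ (the standing hypothesis is $\ga+\ka>1$, not $\ga>1$), so the sewing theorem does not apply to the object you constructed. The trouble comes from where you placed the spatial regularity: by estimating $h_{tus}:=X^{x,i}_{tu}(\der z^i)_{us}$ with $X^{x,i}\in\cac_2^{\ga-\ka}(\cl(\cb,\cb_\ka))$ and $\norm{(\der z^i)_{us}}_{\cb}\leq \abs{u-s}^\ka\,\cn[z^i;\cac_1^\ka(\cb)]$, you only obtain $h\in\cac_3^{\ga}(\cb_\ka)$, and an exponent $\ga<1$ cannot be sewn; the two occurrences of $\ka$ (as a Sobolev gain in the operator norm and as a H\"older gain of $z$) do not add up. The correct bookkeeping, which is exactly what the regular-case computation preceding the proposition suggests (there the residual $J$ is placed in $\cac_2^2(\cb)$, not in $\cac_2^2(\cb_\ka)$), is to measure $h$ in the base space $\cb$: taking $\la=0$ in Lemma \ref{lem-defi-x-x} gives $\norm{h_{tus}}_{\cb}\leq c\norm{x}_\ga\,\cn[z^i;\cac_1^\ka(\cb)]\,\abs{t-s}^{\ga+\ka}$, hence $h\in\cac_3^{\ga+\ka}(\cb)\cap\text{Im}\,\delha$ with $\mu=\ga+\ka>1$. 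Theorem \ref{existence-laha}, applied with base parameter $0$, then yields $\Laha h\in\cap_{\al\in[0,\mu)}\cac_2^{\mu-\al}(\cb_\al)$, and the choice $\al=\ka$ gives precisely $\Laha(X^{x,i}\der z^i)\in\cac_2^{\ga}(\cb_\ka)$ together with, via (\ref{contraction-laha}) at level $\al=\ka$, the bound $\cn[\Laha(X^{x,i}\der z^i);\cac_2^\ga(\cb_\ka)]\leq c\,\cn[h;\cac_3^{\ga+\ka}(\cb)]\leq c\norm{x}_\ga\,\cn[z;\cac_1^\ka(\cb)]$. In other words, the passage from $\cb$ to $\cb_\ka$ must be bought from the regularizing effect encoded in $\Laha$ (i.e.\ in the semigroup), not from the operator norm of $X^{x,i}$; your second paragraph, which invokes (\ref{contraction-laha}) with $\al=0$ on an object to which the theorem does not apply, must be rewritten accordingly. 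Your algebraic step $X^{x,i}\der z^i=-\delha(X^{x,i}z^i)\in\text{Im}\,\delha$ and your treatment of the first term $X^{x,i}_{ts}z^i_s$ are correct.

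A secondary remark on the consistency claim: smooth processes are not dense in the H\"older-type class measured by $\cn[\cdot;\cac_1^0(\cb_\ka)]+\cn[\cdot;\cac_1^\ka(\cb)]$, so the approximation-in-$z$ argument you sketch is fragile. The cleaner route mimics the displayed computation before the proposition: for piecewise differentiable $x$ and $z$ as in the statement, set $J_{ts}:=\int_s^t S_{tu}\,dx^i_u\,z^i_u-X^{x,i}_{ts}z^i_s=\int_s^t S_{tu}\,\dot x^i_u\,(\der z^i)_{us}\,du$, which lies in $\cac_2^{1+\ka}(\cb)$ and satisfies $\delha J=X^{x,i}\der z^i=\delha\lp\Laha(X^{x,i}\der z^i)\rp$; the difference is then a $\delha$-closed element of $\cac_2^{\mu'}(\cb)$ for some $\mu'>1$, hence vanishes by the cohomology and telescopic-sum properties of Proposition \ref{prop:tele}, which gives the coincidence without any approximation of $z$.
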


\

It remains to notice that this result applies in particular to the interpretation of Equation (\ref{equa-mild}) as soon as $y\in \cac_1^0(\cb_{\ga'}) \cap \cacha_1^{\ga'}(\cb_{\ga'})$ and Assumption \textbf{(F)$_1$} is in order. Indeed, thanks to (\ref{contr-f-y-sobo}), one has $f_i(y) \in \cac_1^0(\cb_{\ga'})$, while due to (\ref{hol-reg}) it holds that
\begin{equation}\label{preci:hol}
\cn[f_i(y);\cac_1^{\ga'}(\cb)] \leq \norm{f'}_\infty \cn[y;\cac_1^{\ga'}(\cb)] \leq c \lcl \cn[y;\cac_1^0(\cb_{\ga'})]+\cn[y;\cacha_1^{\ga'}(\cb_{\ga'})] \rcl \ < \infty,
\end{equation}
so $f_i(y) \in \cac_1^0(\cb_{\ga'}) \cap \cac_1^{\ga'}(\cb)$ and we have assumed that $\ga+\ga'>1$.

\subsection{Previous results}

The main result of \cite{RHE} for the Young case is summed up by the following statement:

\begin{theorem}[\cite{RHE}, Theorem 3.10]\label{theo-young-abs}
Under Assumptions \textbf{(X1)$_\ga$} and \textbf{(F)$_2$}, Equation (\ref{equa-mild}) interpreted thanks to Proposition \ref{defi-int-young} admits a unique solution $y$ in $\cacha_1^{\ga'}(\cb_{\ga'})$, and the following estimate holds true:
\begin{equation}\label{contr-sol-young}
\cn[y;\cac_1^{0}(\cb_{\ga'})] +\cn[y;\cacha_1^{\ga'}(\cb_{\ga'})] \leq C\lp \norm{\psi}_{\cb_{\ga'}},\norm{x}_{\ga} \rp,
\end{equation}
for some function $C:(\R^+)^2 \to \R^+$ bounded on bounded sets. Morever, if $y$ (resp. $\yti$) is the solution of (\ref{equa-mild}) associated with a path $x$ (resp. $\xti$) that satisfies Assumption \textbf{(X1)$_\ga$}, with initial condition $\psi$ (resp. $\tilde{\psi}$) in $\cb_{\ga'}$,
\begin{equation}\label{cont-yg}
\cn[y-\yti;\cac_1^0(\cb_{\ga'})]+\cn[y-\yti;\cacha_1^{\ga'}(\cb_{\ga'})] \leq c_{x,\xti,\psi,\tilde{\psi}} \lcl \norm{\psi-\tilde{\psi}}_{\cb_{\ga'}}+\norm{x-\xti}_\ga \rcl,
\end{equation}
with $c_{x,\xti,\psi,\tilde{\psi}}:=C'(\norm{x}_\ga,\norm{\xti}_\ga,\norm{\psi}_{\cb_{\ga'}},\norm{\tilde{\psi}}_{\cb_{\ga'}})$, for some function $C':(\R^+)^4 \to \R^+$ bounded on bounded sets.
\end{theorem}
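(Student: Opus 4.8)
The plan is to obtain the solution $y$, on each subinterval of a fine enough partition of $[0,1]$, as the fixed point of the map
\[
\Gamma(z)_t:=S_{ta}z_a+\cj_{ta}(\hat{d}x\,f(z)),\qquad f(z):=(f_1(z),\ldots,f_m(z)),
\]
defined, on an interval $I=[a,b]$ and for a prescribed value $z_a\in\cb_{\ga'}$, on processes $z:I\to\cb_{\ga'}$ with $z|_{t=a}=z_a$, where $\cj$ is the integral of Proposition~\ref{defi-int-young}. Write $\cm_I(z):=\cn[z;\cac_1^0(I;\cb_{\ga'})]+\cn[z;\cacha_1^{\ga'}(I;\cb_{\ga'})]$. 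Since $\ga'>\frac{1}{4}$, $\cb_{\ga'}$ is a Banach algebra, so the composition bound (\ref{contr-f-y-sobo}) (legitimate here because $\ga'<\frac{1}{2}$ and \textbf{(F)$_2$} holds) gives $\norm{f_i(z_t)}_{\cb_{\ga'}}\le c_f(1+\norm{z_t}_{\cb_{\ga'}})$; moreover, writing $(\der z)_{ts}=(\delha z)_{ts}+a_{ts}z_s$ and combining the H\"older regularity (\ref{hol-reg}) of $a_{ts}$ with $\norm{f_i(z_t)-f_i(z_s)}_\cb\le\norm{f_i'}_\infty\norm{(\der z)_{ts}}_\cb$ shows $f_i(z)\in\cac_1^{\ga'}(I;\cb)$ with semi-norm $\le c_f(1+\cm_I(z))$. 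As $\ga+\ga'>1$, Proposition~\ref{defi-int-young} applies with $\ka=\ga'$ and (\ref{estimation-cas-young}) yields $\cn[\cj(\hat{d}x\,f(z));\cac_2^\ga(I;\cb_{\ga'})]\le c\,\norm{x}_\ga\lcl 1+\cm_I(z)\rcl$. The identities $(\delha(S_{\cdot a}z_a))_{ts}=0$ (because $a_{ts}S_{sa}z_a=S_{ta}z_a-S_{sa}z_a=(\der(S_{\cdot a}z_a))_{ts}$) and the Chasles relation $\delha\cj=0$ (Proposition~\ref{prop:tele} and the construction of $\cj$) then give $\Gamma(z)_a=z_a$, $(\delha\Gamma(z))_{ts}=\cj_{ts}(\hat{d}x\,f(z))$ and $\Gamma(z)_t=S_{ts}\Gamma(z)_s+\cj_{ts}(\hat{d}x\,f(z))$; in particular a fixed point of $\Gamma$ solves (\ref{eq-base}), i.e. (\ref{equa-mild}), and for $I=[a,b]$,
\[
\cm_I(\Gamma(z))\le\norm{z_a}_{\cb_{\ga'}}+c\,\norm{x}_\ga\,(b-a)^{\ga-\ga'}\lcl 1+\cm_I(z)\rcl.
\]

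Next I would run a Banach fixed point argument. On $I=[a,b]$ with $b-a$ small enough (depending on $\norm{z_a}_{\cb_{\ga'}}$ and $\norm{x}_\ga$) the last inequality keeps a suitable ball stable under $\Gamma$, and the parallel bound for $\Gamma(z)-\Gamma(\tilde z)$ — obtained by writing $f_i(z)-f_i(\tilde z)=(z-\tilde z)\cdot\iou f_i'(\tilde z+\te(z-\tilde z))\,d\te$ and using the boundedness of $f_i',f_i''$ (Assumption \textbf{(F)$_2$}) together with the product estimate (\ref{pp-1}) — turns $\Gamma$ into a contraction on that ball, after possibly shrinking $b-a$ further. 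This provides a unique local solution; the relation $\Gamma(z)_t=S_{ts}z_s+\cj_{ts}$ together with $\delha\cj=0$ then lets one concatenate solutions defined on adjacent intervals into a solution on any interval over which the iteration can be pushed.

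The decisive point for a \emph{global} solution — and the step I expect to be the main obstacle — is that the a priori estimate carries a step length depending on $\norm{x}_\ga$ \emph{only}, not on the current size of the solution. Indeed, specializing the inequality of the first paragraph to $I=[s,s+\eta]$, $z=y$, $z_a=y_s$, gives $\cm_{[s,s+\eta]}(y)\le\norm{y_s}_{\cb_{\ga'}}+c\,\norm{x}_\ga\,\eta^{\ga-\ga'}(1+\cm_{[s,s+\eta]}(y))$, and the coefficient $c\norm{x}_\ga\eta^{\ga-\ga'}$ involves $\norm{x}_\ga$ and $\eta$ alone — a consequence of the \emph{linear} growth of $f$ in the $\cb_{\ga'}$-norm ($\ga'<\frac{1}{2}$, so (\ref{contr-f-y-sobo}) is linear) combined with the regularization (\ref{regularisation-semigroupe}). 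Choosing $\eta=\eta(\norm{x}_\ga)$ so that this coefficient is $\le\frac{1}{2}$ gives $\sup_{[s,s+\eta]}\norm{y}_{\cb_{\ga'}}\le 2\norm{y_s}_{\cb_{\ga'}}+1$; iterating over the $\lceil 1/\eta\rceil$ intervals of length $\eta$ covering $[0,1]$ bounds $\cn[y;\cac_1^0(\cb_{\ga'})]$, and the same local bounds together with (\ref{estimation-cas-young}) and the telescoping of $\delha$ bound $\cn[y;\cacha_1^{\ga'}(\cb_{\ga'})]$, both by some $C(\norm{\psi}_{\cb_{\ga'}},\norm{x}_\ga)$ bounded on bounded sets; this is (\ref{contr-sol-young}). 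Since this bound does not degenerate with the length of the existence interval, the maximal time of existence cannot be $<1$, so the solution is global.

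Finally, the stability estimate (\ref{cont-yg}) is obtained by applying the same scheme to the difference of the two equations: $(\delha(y-\yti))_{ts}$ splits (by linearity of $z\mapsto\cj_{ts}(\hat{d}x\,z)$) as $\cj_{ts}(\hat{d}x\,(f(y)-f(\yti)))$ plus $\cj_{ts}(\hat{d}x\,f(\yti))-\cj_{ts}(\hat{d}\xti\,f(\yti))$, the first term controlled by the Lipschitz bound on $f$ (again via (\ref{pp-1})) and (\ref{estimation-cas-young}), the second by feeding the continuity of $x\mapsto X^x$ (Lemma~\ref{lem-defi-x-x}, estimate (\ref{cont-cons-x-x})) into the decomposition (\ref{decompo-int-young}); the initial-condition terms contribute the $\norm{\psi-\tilde{\psi}}_{\cb_{\ga'}}$ part, and the a priori bounds of the previous step turn the constants into functions of $\norm{x}_\ga,\norm{\xti}_\ga,\norm{\psi}_{\cb_{\ga'}},\norm{\tilde{\psi}}_{\cb_{\ga'}}$ bounded on bounded sets. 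A short-interval contraction followed by patching over the $\eta$-intervals promotes the local comparison to the global estimate (\ref{cont-yg}), and taking $\psi=\tilde{\psi}$, $x=\xti$ there yields the uniqueness claim.
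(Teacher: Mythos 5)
This theorem is imported from \cite{RHE} and not reproved in the paper, and the approach there is exactly the abstract fixed-point argument you set up. Your proposal --- local contraction for $\Gamma(z)_t=S_{ta}z_a+\cj_{ta}(\hat{d}x\,f(z))$ in $\cac_1^0(\cb_{\ga'})\cap\cacha_1^{\ga'}(\cb_{\ga'})$, globalization via the linear-growth absorption with a step size depending on $\norm{x}_\ga$ only followed by patching (the same mechanism the paper uses in Proposition \ref{prop-unif-y-m-n}), and the stability splitting $\cj(\hat{d}x\,(f(y)-f(\yti)))+[\cj(\hat{d}x\,f(\yti))-\cj(\hat{d}\xti\,f(\yti))]$ controlled through Lemma \ref{lem-defi-x-x} --- is sound and follows essentially that same strategy.
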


\begin{remark}\label{rk:change-topo}
It is worth noticing that (\ref{contr-sol-young}) and (\ref{estimation-cas-young}) entails in particular
$$\cn[y;\cacha_1^\ga(\cb_{\ga'})] \leq c_{\psi,x}.$$
Indeed, since $y$ is solution to the system, one has
\bean
\norm{(\delha y)_{ts}}_{\cb_{\ga'}} &\leq & \norm{\cj_{ts}(\hat{d} x \, f(y))}_{\cb_{\ga'}}\\
& \leq &c_x \lln t-s \rrn^\ga \lcl \cn[f(y);\cac_1^0(\cb_{\ga'})]+\cn[f(y);\cac_1^{\ga'}(\cb)]\rcl.
\eean
Then, thanks to (\ref{contr-f-y-sobo}) and (\ref{hol-reg}), it holds that $\cn[f(y);\cac_1^0(\cb_{\ga'})] \leq c \lcl 1+\cn[y;\cac_1^0(\cb_{\ga'})]\rcl$ and as in (\ref{preci:hol}), $\cn[f(y);\cac_1^{\ga'}(\cb)]  \leq c \,  \lcl \cn[y;\cac_1^{0}(\cb_{\ga'})]+\cn[y;\cacha_1^{{\ga'}}(\cb_{\ga'})] \rcl$.
\end{remark}

The continuity result (\ref{cont-yg}) provides us with a control over the discretization of the driving signal $x$. This is the first step towards Theorem \ref{main-result-young}:

\begin{notation}
For any $M\in \N$, we denote by $\yba^M$ the Wong-Zakaï approximation associated with $x^M$ (with the same initial condition $\psi$), or otherwise stated the solution to Equation (\ref{equa-mild}) when $x$ is replaced with its approximation $x^M$. 
\end{notation}

\begin{corollary}\label{cor-y-yba}
With the above notation, there exists a function $C:(\R^+)^2 \to \R^+$ bounded on bounded sets such that, for any $M\in \N$,
\begin{equation}
\cn[y-\yba^M;\cac_1^0(\cb_{\ga'})]+\cn[y-\yba^M;\cacha_1^{\ga'}(\cb_{\ga'})] \leq C(\norm{x}_\ga,\norm{\psi}_{\cb_{\ga'}})\cdot  \norm{x-x^M}_\ga.
\end{equation}
\end{corollary}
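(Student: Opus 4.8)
The plan is to deduce the corollary directly from the continuity estimate \eqref{cont-yg} of Theorem \ref{theo-young-abs}, applied with $\xti=x^M$ and $\tilde\psi=\psi$. First I would note that the interpolation $x^M$, being piecewise differentiable, trivially satisfies Assumption \textbf{(X1)$_\ga$} (take the regularizing sequence to be constant, equal to $x^M$ itself), and that for such a regular driving signal the solution of \eqref{equa-mild} in the sense of Proposition \ref{defi-int-young} coincides with the classical mild solution; hence $\yba^M$ is well-defined and is precisely the object to which Theorem \ref{theo-young-abs} applies. Then \eqref{cont-yg}, with $(x,\xti,\psi,\tilde\psi)$ replaced by $(x,x^M,\psi,\psi)$, yields
$$\cn[y-\yba^M;\cac_1^0(\cb_{\ga'})] \leq c_{x,x^M,\psi,\psi} \lcl \norm{\psi-\psi}_{\cb_{\ga'}}+\norm{x-x^M}_\ga \rcl = c_{x,x^M,\psi,\psi}\, u_M,$$
since $\norm{x-x^M}_\ga=\cn[x-x^M;\cac_1^\ga([0,1];\R^m)]=u_M$ by the very definition of $u_M$ in Assumption \textbf{(X1)$_\ga$}.

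The only genuine point to check is that the prefactor $c_{x,x^M,\psi,\psi}=C'(\norm{x}_\ga,\norm{x^M}_\ga,\norm{\psi}_{\cb_{\ga'}},\norm{\psi}_{\cb_{\ga'}})$ can be bounded by a quantity depending only on $\norm{x}_\ga$ and $\norm{\psi}_{\cb_{\ga'}}$, uniformly in $M$. For this I would use the triangle inequality $\norm{x^M}_\ga \leq \norm{x}_\ga+\norm{x-x^M}_\ga=\norm{x}_\ga+u_M$ together with the fact that $u_M\to 0$, hence $\sup_M u_M<\infty$; consequently $\norm{x^M}_\ga \leq \norm{x}_\ga+\sup_M u_M$, and since $C'$ is bounded on bounded sets, the composition produces a bound $c_{x,x^M,\psi,\psi}\leq C(\norm{x}_\ga,\norm{\psi}_{\cb_{\ga'}})$ with $C$ still bounded on bounded sets. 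Finally, using $\sup_{k\in\{0,\ldots,M\}}\norm{y_{t_k}-\yba^M_{t_k}}_{\cb_{\ga'}} \leq \cn[y-\yba^M;\cac_1^0(\cb_{\ga'})]$ (the partition points being a subset of $[0,1]$) concludes the argument.

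Since every step is immediate, I do not anticipate a real obstacle here: the statement is essentially a bookkeeping consequence of the well-posedness and continuity results recalled above, Corollary \ref{cor-y-yba} being nothing more than \eqref{cont-yg} specialized to $\xti=x^M$. The one mild subtlety — keeping the multiplicative constant independent of $M$ — is dealt with by the elementary uniform bound on $\norm{x^M}_\ga$ just described, which is where the assumed convergence $u_M\to 0$ is actually used.
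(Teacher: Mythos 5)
Your proposal is correct and is essentially the argument the paper intends: the corollary is exactly the continuity estimate (\ref{cont-yg}) specialized to $\xti=x^M$, $\tilde\psi=\psi$, with $\norm{x-x^M}_\ga=u_M$ and the prefactor made uniform in $M$ via $\norm{x^M}_\ga\leq\norm{x}_\ga+\sup_M u_M$ and the boundedness of $C'$ on bounded sets. Your remarks on $x^M$ satisfying \textbf{(X1)$_\ga$} and the classical solution coinciding with the Young solution are exactly the bookkeeping the paper leaves implicit.
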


\subsection{A uniform control}

The second step of our reasoning consists in controlling the path $y^{M,N}$ generated by (\ref{euler-scheme}) uniformly with respect to $M$ and $N$. To do so, let us first extend $y^{M,N}$ on $[0,1]$ through the formula: if $t\in [t_k,t_{k+1})$,
\begin{equation}\label{ext-0-1}
y^{M,N}_t:=S_{t-t_k}y^{M,N}_{t_k}+X^{x^M,i}_{tt_k} P_N f_i(y^{M,N}_{t_k}).
\end{equation}

Now set 
$$r^{M,N}_{ts}:=\Laha_{ts} \lp X^{x^M,i}\,  \der P_N f_i(y^{M,N}) \rp$$
and observe that one can write, for any $k\in\{0,\ldots,M-1\}$,
\begin{equation}\label{ecrit-y-m-int}
y^{M,N}_{t_{k+1}}=S_{t_{k+1}-t_k}y^{M,N}_{t_k}+\int_{t_k}^{t_{k+1}}S_{t_{k+1}-u} \, dx_u^{i,M} \, P_N f_i(y^{M,N}_u)-r^{M,N}_{t_{k+1}t_k}.
\end{equation}
Extending the expression to all times $s<t$ gives birth to the two following formulas:

\begin{lemma}\label{lem-interm-young}
If $t_p \leq s <t_{p+1} < \ldots <t_q \leq t < t_{q+1}$, then
\begin{equation}\label{decompo-y-m}
(\delha y^{M,N})_{ts}=\int_s^t S_{t-u} \, dx^{i,M}_u \, P_N f_i(y^{M,N}_u)-y^{M,N,\sharp}_{ts},
\end{equation}
with
\begin{equation}
y^{M,N,\sharp}_{ts}:=r^{M,N}_{tt_q}-S_{t-s}r^{M,N}_{st_p}+\sum_{k=p}^{q-1} S_{t-t_{k+1}}r^{M,N}_{t_{k+1}t_k},
\end{equation}
while if $t_p \leq s < t < t_{p+1}$,
\begin{equation}\label{decompo-y-m-bis}
(\delha y^{M,N})_{ts}=X^{x^M,i}_{ts}P_N f_i(y^{M,N}_{t_p}).
\end{equation}

\end{lemma}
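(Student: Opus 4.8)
The plan is to deduce the two formulas directly from the recursive definition (\ref{ecrit-y-m-int})--(\ref{ext-0-1}) together with the telescopic-sum property of Proposition \ref{prop:tele} and the Chasles relation for the convolutional Lebesgue integral. I would start with the second, easier identity (\ref{decompo-y-m-bis}): when $t_p \leq s < t < t_{p+1}$, both $s$ and $t$ lie in the same interval of the partition, so applying the extension formula (\ref{ext-0-1}) at time $t$ and at time $s$ and using $S_{tt_p} = S_{ts}S_{st_p}$ and the flow/additivity property $X^{x^M,i}_{tt_p} = X^{x^M,i}_{ts} + S_{ts}X^{x^M,i}_{st_p}$ (which is just $\delha X^{x^M,i}=0$ after extending $X^{x^M,i}$ to all pairs via its integral definition, cf.\ Lemma \ref{lem-defi-x-x}), I get
$$(\delha y^{M,N})_{ts} = (\der y^{M,N})_{ts}-a_{ts}y^{M,N}_s = X^{x^M,i}_{ts}P_N f_i(y^{M,N}_{t_p}),$$
since the $S_{ts}(\cdots)$ contributions reconstruct exactly $a_{ts}y^{M,N}_s + (\text{stuff that cancels})$. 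This is a short computation that only uses $\delha$-additivity of $X^{x^M}$ and $a_{ts}=S_{ts}-\Id$.

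For the main identity (\ref{decompo-y-m}), the idea is to first prove it across a single sub-interval of the partition, i.e.\ for $(s,t)=(t_k,t_{k+1})$, where it is precisely the rewriting (\ref{ecrit-y-m-int}) of the scheme, so that with $y^{M,N,\sharp}_{t_{k+1}t_k}=r^{M,N}_{t_{k+1}t_k}$ we recover (\ref{decompo-y-m}) on the grid. Then I would patch: for general $t_p\leq s<t_{p+1}<\cdots<t_q\leq t<t_{q+1}$, use the telescopic-sum formula $(\delha y^{M,N})_{ts}=\sum S_{tt_{i+1}}(\delha y^{M,N})_{t_{i+1}t_i}$ over the partition $\{s,t_{p+1},\ldots,t_q,t\}$, applying the grid identity on the interior intervals and the fractional/boundary identity (\ref{decompo-y-m-bis}) on the two end-pieces $[s,t_{p+1})$ and $[t_q,t)$. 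On the integral side, the Chasles relation $\delha \cj=0$ for $\cj_{ts}=\int_s^t S_{tu}\,dx^{i,M}_u\,P_Nf_i(y^{M,N}_u)$ gives the same telescopic decomposition $\int_s^t = \sum S_{tt_{i+1}}\int_{t_{i+1}}^{t_i}$; subtracting term by term, all the genuine-integral contributions match and what remains is the sum of the $r^{M,N}$-type residuals, pushed forward by the appropriate semigroup operators. Collecting them yields exactly $y^{M,N,\sharp}_{ts}=r^{M,N}_{tt_q}-S_{ts}r^{M,N}_{st_p}+\sum_{k=p}^{q-1}S_{tt_{k+1}}r^{M,N}_{t_{k+1}t_k}$, where the first two terms come from the two fractional end-pieces (note $r^{M,N}_{tt_q}$ and $r^{M,N}_{st_p}$ are defined via $\Laha$ on sub-grid intervals, which is legitimate since $\Laha$ is interval-local by Theorem \ref{existence-laha}).

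The only mildly delicate point — the step I would flag as the main obstacle — is bookkeeping the two "partial" end-intervals $[s,t_{p+1})$ and $[t_q,t)$ correctly: one must check that on these pieces $(\delha y^{M,N})$ contributes no residual term (consistent with (\ref{decompo-y-m-bis})), whereas the convolutional integral $\int_s^{t_{p+1}}$ and $\int_{t_q}^t$ do appear on the right-hand side, so that their difference produces precisely $-S_{ts}r^{M,N}_{st_p}$ and $+r^{M,N}_{tt_q}$ respectively, with the correct signs and semigroup prefactors. Concretely this amounts to writing $\int_s^{t_{p+1}}S_{t_{p+1}u}\,dx^{i,M}_u\,P_Nf_i(y^{M,N}_u) = X^{x^M,i}_{t_{p+1}s}P_Nf_i(y^{M,N}_s) + \Laha_{t_{p+1}s}(X^{x^M,i}\,\der P_Nf_i(y^{M,N}))$ via (\ref{dec-you-regu}) and comparing with (\ref{decompo-y-m-bis}); the $\Laha$ remainder on this piece is by definition $r^{M,N}_{t_{p+1}s}$, but since in (\ref{decompo-y-m-bis}) we evaluated $f_i$ at $y^{M,N}_{t_p}$ rather than $y^{M,N}_s$, one has to reconcile $X^{x^M,i}_{t_{p+1}s}P_Nf_i(y^{M,N}_{t_p})$ with $X^{x^M,i}_{t_{p+1}s}P_Nf_i(y^{M,N}_s)+r^{M,N}_{t_{p+1}s}$ using $\delha$-additivity of $X^{x^M}$ across $t_p<s<t_{p+1}$ — which closes the computation. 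Everything else is routine algebra with $\delha$, $a_{ts}$, the Chasles relation, and the locality of $\Laha$.
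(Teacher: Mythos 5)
Your proposal is correct and follows essentially the same route as the paper: formula (\ref{decompo-y-m-bis}) from $\delha X^{x^M,i}=0$, and (\ref{decompo-y-m}) by telescoping $(\delha y^{M,N})_{ts}$ over $\{s,t_{p+1},\ldots,t_q,t\}$, inserting the per-interval identity (\ref{ecrit-y-m-int}) and matching it against the Chasles decomposition of the integral. The only cosmetic difference is your treatment of the left end-piece, where you apply (\ref{dec-you-regu}) on $[s,t_{p+1}]$ and reconcile via $\delha$-additivity, whereas the paper writes $(\delha y^{M,N})_{t_{p+1}s}=(\delha y^{M,N})_{t_{p+1}t_p}-S_{t_{p+1}s}(\delha y^{M,N})_{st_p}$; the two manipulations are algebraically equivalent.
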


\begin{proof}
Formula (\ref{decompo-y-m-bis}) is a straightforward consequence of the relation $\delha X^{x^M,i}=0$. Formula (\ref{decompo-y-m}) follows from the association of (\ref{ecrit-y-m-int}) with the telescopic-sum property contained in Proposition \ref{prop:tele}, which gives here
\begin{eqnarray}\label{decompo-inter}
(\delha y^{M,N})_{ts} &=& \sum_{k=p+1}^{q-1} S_{t-t_{k+1}} (\delha y^{M,N})_{t_{k+1}t_k}+(\delha y^{M,N})_{tt_q}+S_{t-t_{p+1}} (\delha y^{M,N})_{t_{p+1}s}  \nonumber\\
&=& \lc \int_{t_{p+1}}^{t_q} S_{t-u} \, dx^{i,M}_u \, P_N f_i(y^{M,N}_u)-\sum_{k=p+1}^{q-1}S_{t-t_{k+1}} r^{M,N}_{t_{k+1}t_k}\rc \nonumber\\
& & +\lc \int_{t_q}^t S_{t-u} \, dx^{i,M}_u \, P_N f_i(y^{M,N}_u)-r^{M,N}_{tt_q}\rc+S_{t-t_{p+1}} (\delha y^{M,N})_{t_{p+1}s}.
\end{eqnarray}
Then\begin{eqnarray}\label{decompo-inter-de}
(\delha y^{M,N})_{t_{p+1}s} & = &  (\delha y^{M,N})_{t_{p+1}t_p}-S_{t_{p+1}-s}(\delha y^{M,N})_{st_p}\nonumber\\
&=& \lc \int_{t_p}^{t_{p+1}} S_{t_{p+1}-u} \, dx^{i,M}_u \, P_N f_i(y^{M,N}_u)+r^{M,N}_{t_{p+1}t_p} \rc \nonumber\\
& & \hspace{1cm}-S_{t_{p+1}-s} \lc \int_{t_p}^s S_{s-u} \, dx^{i,M}_u \, P_N f_i(y^{M,N}_u)-r^{M,N}_{st_p} \rc,
\end{eqnarray}
and it suffices to inject (\ref{decompo-inter-de}) in (\ref{decompo-inter}) to get (\ref{decompo-y-m}).

\end{proof}

We are going to lean on the two expressions (\ref{decompo-y-m}) and (\ref{decompo-y-m-bis}) in order to establish the expected uniform estimate:
\begin{proposition}\label{prop-unif-y-m-n}
There exists two constants $C_1,C_2>0$ such that for every $M,N \in \N$,
\begin{equation}\label{bound-unif-young}
\cn[y^{M,N};\cac_1^{0}([0,1],\cb_{\ga'})]+\cn[y^{M,N};\cacha_1^{{\ga'}}([0,1],\cb_{\ga'})] \leq C_1 \lcl 1+\norm{\psi}_{\cb_{\ga'}} \rcl \exp \lp C_2 \norm{x}_\ga^{1/(\ga-\ga')}\rp,
\end{equation}
where $y^{M,N}$ is extended on $[0,1]$ through Formula (\ref{ext-0-1}).
\end{proposition}

\begin{proof}
For the sake of conciseness, we use the short notation
$$\cn[y^{M,N};\cacha_1^{0,{\ga'}}(I)]:=\cn[y^{M,N};\cac_1^{0}(I,\cb_{\ga'})]+\cn[y^{M,N};\cacha_1^{{\ga'}}(I,\cb_{\ga'})].$$
We will actually prove the following assertion: there exists a time $T_0=T_0 (\norm{x}_\ga)>0$ and a sequence of radii $R_l=R_l(\norm{x}_\ga,\norm{\psi}_{\cb_{\ga'}})$ such that for any $l$,
\begin{equation}\label{hypo-recur}
\cn[y^{M,N};\cacha_1^{0,{\ga'}}([0,lT_0])] \leq R_l.
\end{equation}
For $l=0$, take $R_0:=\norm{\psi}_{\cb_{\ga'}}$. Now assume that the property holds true for $l$, and let $s,t\in [0,(l+1)T_0]$.

\smallskip

\textit{1$^{\text{st}}$ case}: $s,t \in [lT_0,(l+1)T_0]$.

\smallskip

\textit{1$^{\text{st}}$ subcase}: $t_p \leq s < t_{p+1} < \ldots <t_q \leq t <t_{q+1}$, with $\lln t-s \rrn \geq \frac{1}{M}$. Then, from (\ref{decompo-y-m}),
$$(\delha y^{M,N})_{ts}=\int_s^t S_{t-u} \, dx^{i,M}_u \, P_N f_i(y^{M,N}_u)-y^{M,N,\sharp}_{ts}.$$
Owing to the estimate (\ref{estimation-cas-young}) (applied to $x=x^M$), one easily deduces (see Remark \ref{rk:change-topo})
$$\norm{\int_s^t S_{t-u} \, dx^{i,M}_u \, P_N f_i(y^{M,N}_u)}_{\cb_{\ga'}} \leq c \norm{x}_\ga \lln t-s \rrn^{\ga'} T_0^{\ga-{\ga'}} \lcl 1+\cn[y^{M,N};\cacha_1^{0,{\ga'}}([0,(l+1)T_0])] \rcl.$$
Besides, thanks to the contraction property (\ref{contraction-laha}) of $\Laha$, we get
\begin{eqnarray}
\norm{r^{M,N}_{ts}}_{\cb} &\leq & c \lln t-s \rrn^{\ga+\ga'} \cn[X^{x^M,i} \der P_N(f_i(y^{M,N}));\cac_2^{\ga+\ga'}([0,(l+1)T_0];\cb)] \nonumber\\
&\leq & c \lln t-s \rrn^{\ga+\ga'} \cn[X^{x^M,i};\cac_2^\ga(\cl(\cb,\cb))] \cdot \cn[f_i(y^{M,N});\cac_1^{\ga'}([0,(l+1)T_0];\cb)]\nonumber \\
&\leq & c \norm{x}_\ga \lln t-s \rrn^{\ga+{\ga'}} \cn[y^{M,N};\cacha_1^{0,{\ga'}}([0,(l+1)T_0])] ,\label{est-r-you-1}
\end{eqnarray}
where we have used Lemma \ref{lem-defi-x-x} and (\ref{preci:hol}) to get the last inequality. Using the contraction property (\ref{contraction-laha}) again, we also get
\begin{eqnarray}
\norm{r^{M,N}_{ts}}_{\cb_{\ga'}} &\leq & c \lln t-s \rrn^\ga \cn[X^{x^M,i} \der P_N(f_i(y^{M,N}));\cac_2^{\ga+\ga'}([0,(l+1)T_0];\cb)] \nonumber\\
&\leq & c \norm{x}_\ga \lln t-s \rrn^\ga \cn[y^{M,N};\cacha_1^{0,{\ga'}}([0,(l+1)T_0])] .\label{est-r-you-2}
\end{eqnarray}
Thus,
\bean
\norm{y^{M,N,\sharp}_{ts}}_{\cb_{\ga'}}&\leq & \norm{r^{M,N}_{tt_q}}_{\cb_{\ga'}}+\norm{r^{M,N}_{st_p}}_{\cb_{\ga'}}+\norm{r^{M,N}_{t_qt_{q-1}}}_{\cb_{\ga'}}+c_{\ga'} \sum_{k=p}^{q-2} \lln t-t_{k+1}\rrn^{-{\ga'}}\norm{r^{M,N}_{t_{k+1}t_k}}_{\cb}\\
&\leq &c \norm{x}_\ga \lcl 1+\cn[y^{M,N};\cacha_1^{0,{\ga'}}([0,(l+1)T_0])] \rcl \cdot\\
& & \hspace{2cm}\lcl \lln t-s \rrn^\ga +\frac{1}{M^{\ga+{\ga'}-1}} \lp \frac{1}{M} \sum_{k=p}^{q-2} \lln t-t_{k+1} \rrn^{-{\ga'}} \rp \rcl\\
&\leq&c \norm{x}_\ga \lcl 1+\cn[y^{M,N};\cacha_1^{0,{\ga'}}([0,(l+1)T_0])] \rcl \lcl \lln t-s \rrn^\ga + \frac{\lln t-s \rrn^{1-{\ga'}}}{M^{\ga+{\ga'}-1}}  \rcl\\
&\leq& c \norm{x}_\ga  \lln t-s \rrn^\ga  \lcl 1+\cn[y^{M,N};\cacha_1^{0,{\ga'}}([0,(l+1)T_0])] \rcl.
\eean

\textit{2$^{\text{nd}}$ subcase}: $t_p \leq s <t < t_{p+1}$. Then $(\delha y^{M,N})_{ts}=X^{x^M,i}_{ts}P_N f_i(y^{M,N}_{t_p})$, so that 
$$\norm{(\delha y^{M,N})_{ts}}_{\cb_{\ga'}} \leq c \norm{x}_\ga \lln t-s \rrn^\ga \lcl 1+\cn[y^{M,N};\cacha_1^{0,{\ga'}}([0,(l+1)T_0])] \rcl.$$

\smallskip

\textit{3$^{\text{rd}}$ subcase}: $t_p \leq s < t_{p+1} \leq t < t_{p+2}$ with $\lln t-s \rrn \leq 1/M$. Just notice that $\norm{(\delha y^{M,N})_{ts}}_{\cb_{\ga'}} \leq \norm{(\delha y^{M,N})_{tt_{p+1}}}_{\cb_{\ga'}}+\norm{(\delha y^{M,N})_{t_{p+1}s}}_{\cb_{\ga'}}$, so that we can go back to the second subcase.

\smallskip

\textit{Conclusion of the 1$^{\text{st}}$ case}: $$\cn[y^{M,N};\cacha_1^{\ga'}([lT_0,(l+1)T_0])] \leq c \norm{x}_\ga T_0^{\ga-{\ga'}} \lcl 1+\cn[y^{M,N};\cacha_1^{0,{\ga'}}([0,(l+1)T_0])] \rcl.$$

\smallskip

\textit{2$^{\text{nd}}$ case}: $s < lT_0 \leq t \leq (l+1)T_0$. One has 
$\norm{(\delha y^{M,N})_{ts}}_{\cb_{\ga'}} \leq \norm{(\delha y^{M,N})_{t,lT_0}}_{\cb_{\ga'}}+\norm{(\delha y^{M,N})_{lT_0,s}}_{\cb_{\ga'}}$, and so, owing to the recurrence assumption,
$$\norm{(\delha y^{M,N})_{ts}}_{\cb_{\ga'}} \leq \lln t-s \rrn^{\ga'} \lcl \cn[y^{M,N};\cacha_1^{\ga'}([lT_0,(l+1)T_0])]+R_l \rcl.$$
The association of the two cases gives
$$\cn[y^{M,N};\cacha_1^{\ga'}([0,(l+1)T_0])] \leq c^1 \norm{x}_\ga T_0^{\ga-{\ga'}} \lcl 1+\cn[y^{M,N};\cacha_1^{0,{\ga'}}([0,(l+1)T_0])] \rcl +R_l.$$
Since, for any $t\in [0,(l+1)T_0]$, $\norm{y^{M,N}_t }_{\cb_{\ga'}} \leq \norm{\psi}_{\cb_{\ga'}}+\cn[y^{M,N};\cacha_1^{\ga'}([0,(l+1)T_0])]$, one deduces
$$
\cn[y^{M,N};\cacha_1^{0,{\ga'}}([0,(l+1)T_0])]
 \leq \norm{\psi}_{\cb_{\ga'}}+2R_l+2c^1 \norm{x}_\ga T_0^{\ga-{\ga'}} \lcl 1+\cn[y^{M,N};\cacha_1^{0,{\ga'}}([0,(l+1)T_0])] \rcl.
$$
To complete the proof of (\ref{hypo-recur}) on $[0,(l+1)T_0]$, it suffices to pick $T_0 $ such that $2c^1 \norm{x}_\ga T_0^{\ga-{\ga'}}=1/2$ and to set
$$R_{l+1}=2\norm{\psi}_{\cb_{\ga'}}+4R_l+1.$$
The bound (\ref{bound-unif-young}) is then easily deduced from the estimate $R_l \leq c\,  4^l \lcl 1+\norm{\psi}_{\cb_{\ga'}} \rcl$.

\end{proof}

\subsection{Space discretization}\label{subsec:disc-spa-you}
This is the final step, that will lead us from $\yba^M$ to $y^{M,N}$. As in the previous subsection, we extend $y^{M,N}$ on $[0,1]$ via (\ref{ext-0-1}) and we use the notation $r^{M,N},y^{M,N,\sharp}$ introduced in Lemma \ref{lem-interm-young}.

\begin{lemma}
For every $M,N\in \N$, if $t_p \leq s <t_{p+1} < \ldots < t_q \leq t < t_{q+1}$ with $\lln t-s \rrn \geq 1/M$, then
$$\norm{y^{M,N,\sharp}_{ts}}_{\cb_{\ga'}} \leq \frac{C(\norm{x}_\ga,\norm{\psi}_{\cb_{\ga'}})}{M^{\ga+{\ga'}-1}} \lln t-s \rrn^{\ga'},$$
for some function $C$ bounded on bounded sets.
\end{lemma}

\begin{proof}
Thanks to the uniform control given by Proposition \ref{prop-unif-y-m-n} and using the estimates (\ref{est-r-you-1})-(\ref{est-r-you-2}), we get
\bean
\lefteqn{\norm{y^{M,N,\sharp}_{ts}}_{\cb_{\ga'}}}\\
 &\leq & \norm{r^{M,N}_{tt_q}}_{\cb_{\ga'}}+\norm{r^{M,N}_{st_p}}_{\cb_{\ga'}}+\norm{r^{M,N}_{t_qt_{q-1}}}_{\cb_{\ga'}}+c_{\ga'} \sum_{k=p}^{q-2} \lln t-t_{k+1}\rrn^{-{\ga'}} \norm{r^{M,N}_{t_{k+1}t_k}}_{\cb}\\
&\leq & c_{x,\psi} \lcl \frac{1}{M^\ga}+\frac{1}{M^{\ga+{\ga'}-1}} \lp \frac{1}{M} \sum_{k=p}^{q-1} \lln t-t_{k+1}\rrn^{-{\ga'}} \rp \rcl\\
&\leq & c_{x,\psi} \lcl \frac{\lln t-s \rrn^{\ga'}}{M^{\ga-{\ga'}}}+\frac{\lln t-s \rrn^{1-{\ga'}}}{M^{\ga+{\ga'}-1}} \rcl \ \leq \ c_{x,\psi} \frac{\lln t-s \rrn^{\ga'}}{M^{\ga+{\ga'}-1}},
\eean
where, for the last inequality, we have used the fact that $1/4 < {\ga'} <1/2$.
\end{proof}

\begin{lemma}
For every $M,N\in \N$, if $t_p \leq s <t_{p+1} < \ldots < t_q \leq t < t_{q+1}$ with $\lln t-s \rrn \geq 1/M$, then
$$\norm{\int_s^t S_{t-u} \, dx^{i,M}_u \, (P_N-\id) f_i(y^{M,N}_u)}_{\cb_{\ga'}} \leq \frac{C(\norm{x}_\ga,\norm{\psi}_{\cb_{\ga'}})}{\la_N^{\ga-{\ga'}}} \lln t-s \rrn^{\ga'},$$
for some function $C$ bounded on bounded sets.
\end{lemma}

\begin{proof}
As $P_N$ commutes with the semigroup, one can write
\begin{multline*}
\int_s^t S_{t-u} \, dx^{i,M}_u \, (P_N-\id) f_i(y^{M,N}_u)\\
=X^{x,i,M}_{ts}(P_N-\id) f_i(y^{M,N}_s)+(P_N-\id)\Laha_{ts}(X^{x,i,M}\der f_i(y^{M,N})).
\end{multline*}
Now, one has
\bean
\lefteqn{\norm{X^{x,i,M}_{ts}(P_N-\id)f_i(y^{M,N}_s)}_{\cb_{\ga'}}}\\
& \leq & c_x\lln t-s \rrn^{\ga'} \norm{(P_N-\id)f_i(y^{M,N}_s)}_{\cb_{2\ga'-\ga}} \quad \text{(use Lemma \ref{lem-defi-x-x})}\\
&\leq & c_x \frac{\lln t-s \rrn^{\ga'}}{\la_N^{\ga-\ga'}} \norm{f_i(y^{M,N}_s)}_{\cb_{\ga'}} \quad \text{(by (\ref{projection}))} \\
& \leq & c_{x,\psi} \frac{\lln t-s \rrn^{\ga'}}{\la_N^{\ga-\ga'}},
\eean
where we have used (\ref{contr-f-y-sobo}) and the uniform control given by Proposition \ref{prop-unif-y-m-n} to get the last inequality. Then, as in (\ref{est-r-you-2}), we get
\bean
\lefteqn{\norm{(P_N-\id) \Laha_{ts}(X^{x,i,M} \der f_i(y^{M,N}))}_{\cb_{\ga'}}}\\
& \leq &\frac{1}{\la_N^{\ga-\ga'}} \norm{\Laha_{ts}(X^{x,i,M} \der f_i(y^{M,N}))}_{\cb_\ga}\\
&\leq &c_{x} \frac{\lln t-s \rrn^{\ga'}}{\la_N^{\ga-\ga'}} \cn[y^{M,N};\cacha_1^{0,{\ga'}}([0,(l+1)T_0])]\ \leq  \ c_{x,\psi} \frac{\lln t-s \rrn^{\ga'}}{\la_N^{\ga-\ga'}}.
\eean

\end{proof}

We are now in a position to prove the main result of this subsection, which, together with Corollary \ref{cor-y-yba}, completes the proof of Theorem \ref{main-result-young}. Indeed, by (\ref{hol-reg}), we know that
$$\cn[y-y^{M,N};\cac_1^{\ga'}(\cb)]\leq c \lcl \cn[y-y^{M,N};\cac_1^{0}(\cb_{\ga'})]+\cn[y-y^{M,N};\cacha_1^{{\ga'}}(\cb_{\ga'})]\rcl.$$

\begin{proposition}
There exists a function $C:(\R^+)^2 \to \R^+$ bounded on bounded sets such that for every $M,N\in \N$,
\begin{multline}
\cn[\yba^M-y^{M,N};\cac_1^{0}(\cb_{\ga'})]+\cn[\yba^M-y^{M,N};\cacha_1^{{\ga'}}(\cb_{\ga'})]\\
 \leq C(\norm{x}_\ga,\norm{\psi}_{\cb_{\ga'}})  \lcl \norm{\psi-P_N \psi}_{\cb_{\ga'}}+\frac{1}{M^{\ga+{\ga'}-1}} \rcl.
\end{multline} 
\end{proposition}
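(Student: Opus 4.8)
The plan is to imitate the uniform control of Proposition \ref{prop-unif-y-m-n}: extend $y^{M,M}$ to $[0,1]$ through Formula (\ref{ext-0-1}), set $d^M := \yba^M - y^{M,M}$ (so that $d^M_0 = \psi - P_M\psi$), and run a patching argument over intervals $I_l := [lT_0,(l+1)T_0]$, with $T_0 = T_0(\norm{x}_\ga)$ to be fixed small at the end. Since $\yba^M$ solves (\ref{eq-base}) driven by $x^M$, one has $(\delha \yba^M)_{ts} = \cj_{ts}(\hat{d}x^M\, f(\yba^M))$, whereas Lemma \ref{lem-interm-young} gives, for $t_p \leq s < t_{p+1} < \ldots < t_q \leq t < t_{q+1}$, the identity $(\delha y^{M,M})_{ts} = \int_s^t S_{tu}\, dx^{i,M}_u\, P_M f_i(y^{M,M}_u) - y^{M,M,\sharp}_{ts}$. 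Subtracting and writing $f_i(\yba^M_u) - P_M f_i(y^{M,M}_u) = (f_i(\yba^M_u) - f_i(y^{M,M}_u)) - (P_M-\id)f_i(y^{M,M}_u)$, I would start from the decomposition, valid for such $s,t$,
$$(\delha d^M)_{ts} = \cj_{ts}\big(\hat{d}x^M\,(f(\yba^M) - f(y^{M,M}))\big) - \int_s^t S_{tu}\, dx^{i,M}_u\,(P_M-\id)f_i(y^{M,M}_u) + y^{M,M,\sharp}_{ts}.$$

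First I would deal with the two error terms: when $\abs{t-s} \geq 1/M$, the two lemmas of this subsection bound them by $c_{x,\psi}\abs{t-s}^{\ga'}\big(M^{-2(\ga-\ga')} + M^{-(\ga+\ga'-1)}\big) \leq c_{x,\psi}\, M^{-(\ga+\ga'-1)}\abs{t-s}^{\ga'}$, the last inequality because $\ga > 1/2 > \ga'$ forces $2(\ga-\ga') \geq \ga+\ga'-1$. For the main term I would invoke the estimate (\ref{estimation-cas-young}) of Proposition \ref{defi-int-young} (applied to $x^M$ on $I_l$, and using $\norm{x^M}_\ga \leq c\norm{x}_\ga$) with integrand $z := f(\yba^M) - f(y^{M,M})$, controlling its two seminorms via the a priori bounds $\cn[\yba^M;\cac_1^0(\cb_{\ga'})] + \cn[\yba^M;\cacha_1^{\ga'}(\cb_{\ga'})] \leq c_{x,\psi}$ (Theorem \ref{theo-young-abs} for $x^M$), the analogous bound for $y^{M,M}$ (Proposition \ref{prop-unif-y-m-n}), and the estimates (\ref{contr-f-y-sobo}), (\ref{pp-1}), (\ref{hol-reg}) together with the embedding $\cb_{\ga'} \hookrightarrow L^\infty$. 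Concretely, writing $f_i(\yba^M_u) - f_i(y^{M,M}_u) = d^M_u \cdot g^i_u$ with $g^i_u := \int_0^1 f_i'\big(\te\yba^M_u + (1-\te)y^{M,M}_u\big)\, d\te$, the product rule (\ref{pp-1}) gives $\cn[z;\cac_1^0(\cb_{\ga'})] \leq c_{x,\psi}\,\cn[d^M;\cac_1^0(\cb_{\ga'})]$, and expanding $g^i_t d^M_t - g^i_s d^M_s = g^i_t(\der d^M)_{ts} + (\der g^i)_{ts}\, d^M_s$, together with $\norm{(\der \yba^M)_{ts}}_\cb + \norm{(\der y^{M,M})_{ts}}_\cb \leq c_{x,\psi}\abs{t-s}^{\ga'}$ (a consequence of the $\cacha_1^{\ga'}$-bounds, of $a_{ts} = S_{ts}-\id$, and of (\ref{hol-reg})), gives $\cn[z;\cac_1^{\ga'}(\cb)] \leq c_{x,\psi}\big(\cn[d^M;\cac_1^0(\cb_{\ga'})] + \cn[d^M;\cacha_1^{\ga'}(\cb_{\ga'})]\big)$. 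Feeding this into (\ref{estimation-cas-young}) and using $\abs{t-s}^\ga \leq T_0^{\ga-\ga'}\abs{t-s}^{\ga'}$ on $I_l$, I would obtain, for $s,t \in I_l$ with $\abs{t-s} \geq 1/M$,
$$\norm{(\delha d^M)_{ts}}_{\cb_{\ga'}} \leq c_{x,\psi}\, T_0^{\ga-\ga'}\abs{t-s}^{\ga'}\big(\cn[d^M;\cac_1^0(I_l,\cb_{\ga'})] + \cn[d^M;\cacha_1^{\ga'}(I_l,\cb_{\ga'})]\big) + \frac{c_{x,\psi}}{M^{\ga+\ga'-1}}\abs{t-s}^{\ga'},$$
while for $\abs{t-s} < 1/M$ I would reduce, exactly as in the second and third subcases of Proposition \ref{prop-unif-y-m-n}, to $s,t$ inside a single cell, where $(\delha d^M)_{ts} = (\delha \yba^M)_{ts} - X^{x^M,i}_{ts}P_M f_i(y^{M,M}_{t_p})$ and the a priori bounds give $\norm{(\delha d^M)_{ts}}_{\cb_{\ga'}} \leq c_{x,\psi}\abs{t-s}^\ga \leq c_{x,\psi}\, M^{-(\ga-\ga')}\abs{t-s}^{\ga'} \leq c_{x,\psi}\, M^{-(\ga+\ga'-1)}\abs{t-s}^{\ga'}$ (here $\ga' \leq 1/2$).

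It then remains to patch. I would pick $T_0 = T_0(\norm{x}_\ga)$ so small that $c_{x,\psi}T_0^{\ga-\ga'} \leq 1/2$ and $T_0^{\ga'} \leq 1/2$, absorb successively the $\cacha_1^{\ga'}$- and then the $\cac_1^0$-seminorm over $I_l$ (using $\norm{d^M_t}_{\cb_{\ga'}} \leq \norm{d^M_{lT_0}}_{\cb_{\ga'}} + \norm{(\delha d^M)_{t,lT_0}}_{\cb_{\ga'}}$, $S$ being a contraction on $\cb_{\ga'}$), and couple this with the cross-interval bound $\norm{(\delha d^M)_{ts}}_{\cb_{\ga'}} \leq \norm{(\delha d^M)_{t,lT_0}}_{\cb_{\ga'}} + \norm{(\delha d^M)_{lT_0,s}}_{\cb_{\ga'}}$ for $s < lT_0 \leq t$. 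This produces a recursion $R_{l+1} \leq c'R_l + c''_{x,\psi}\, M^{-(\ga+\ga'-1)}$ for $R_l := \cn[d^M;\cac_1^0([0,lT_0],\cb_{\ga'})] + \cn[d^M;\cacha_1^{\ga'}([0,lT_0],\cb_{\ga'})]$, started at $R_0 = \norm{\psi - P_M\psi}_{\cb_{\ga'}}$. Since the number of steps $L := \lceil 1/T_0\rceil$ depends only on $\norm{x}_\ga$, iterating gives $R_L \leq c_{x,\psi}\big(\norm{\psi - P_M\psi}_{\cb_{\ga'}} + M^{-(\ga+\ga'-1)}\big)$, and in particular $\sup_k \norm{\yba^M_{t_k} - y^{M,M}_{t_k}}_{\cb_{\ga'}} \leq \cn[d^M;\cac_1^0([0,1],\cb_{\ga'})] \leq R_L$, with all constants bounded on bounded sets of $(\norm{x}_\ga,\norm{\psi}_{\cb_{\ga'}})$.

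The hard part will be the $\cac_1^{\ga'}(\cb)$-bound on the composition difference $f(\yba^M) - f(y^{M,M})$: its time-regularity has to be controlled \emph{linearly} in the $\cacha_1^{\ga'}$-seminorm of $d^M$ (so that the contraction on small intervals actually closes), which is precisely why one expands $g^i_t d^M_t - g^i_s d^M_s$ and uses the uniform $\cacha_1^{\ga'}$-controls on \emph{both} $\yba^M$ and $y^{M,M}$, alongside the Banach-algebra and Sobolev estimates of Subsection \ref{subsec:tools}. By contrast the two error terms $y^{M,M,\sharp}$ and the $(P_M-\id)$-integral are already taken care of by the two preceding lemmas, and the rest is routine small-interval bookkeeping as in Proposition \ref{prop-unif-y-m-n}.
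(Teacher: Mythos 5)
Your proposal follows essentially the same route as the paper: the decomposition of $\delha(\yba^M-y^{M,M})$ via Lemma \ref{lem-interm-young} into the $f_i(\yba^M)-f_i(y^{M,M})$ integral plus the $(P_M-\id)$ integral plus $y^{M,M,\sharp}$, the two preceding lemmas for the error terms, the contraction estimate for the main term via (\ref{estimation-cas-young}) and the uniform bounds of Theorem \ref{theo-young-abs} and Proposition \ref{prop-unif-y-m-n}, the small-interval cases, and the same $T_0$-patching with the initial discrepancy $\psi-P_M\psi$ entering through $S_{s0}(\psi-P_M\psi)$. The only difference is that you spell out the Lipschitz-type bound on $f(\yba^M)-f(y^{M,M})$ (the paper's ``it is not hard to see'' step), which is done correctly and changes nothing of substance.
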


\begin{proof}
As in the previous subsection, we use the short notation
$$\cn[\yba^M-y^{M,N};\cacha_1^{0,{\ga'}}(I)]:=\cn[\yba^M-y^{M,N};\cac_1^{0}(I,\cb_{\ga'})]+\cn[\yba^M-y^{M,N};\cacha_1^{{\ga'}}(I,\cb_{\ga'})].$$
\textit{Local result}. Consider first an interval $I_0=[0,T_0]$, with $T_0$ a time to be precised at the end of this first step, and let $s,t \in [0,T_0]$.

\smallskip

\textit{1$^{\text{st}}$ case}: if $t_p \leq s < t <t_{p+1}$, then
$$\delha (\yba^M-y^{M,N})_{ts}=(\delha \yba^M)_{ts}-X^{x,i,M}_{ts} P_N f_i(y^{M,N}_{t_p}),$$
and hence
$$\norm{\delha (\yba^M-y^{M,N})_{ts}}_{\cb_{\ga'}} \leq c_{\psi,x} \lln t-s \rrn^\ga \leq c_{\psi,x} \frac{\lln t-s \rrn^{\ga'}}{M^{\ga-{\ga'}}} \leq c_{\psi,x} \frac{\lln t-s \rrn^{\ga'}}{M^{\ga+\ga'-1}}.$$

\smallskip

\textit{2$^{\text{nd}}$ case}: if $t_p \leq s < t_{p+1} \leq t < t_{p+2}$, we go back to the previous case by noticing that
$$\norm{\delha (\yba^M-y^{M,N})_{ts}}_{\cb_{\ga'}} \leq \norm{\delha (\yba^M-y^{M,N})_{tt_{p+1}}}_{\cb_{\ga'}}+\norm{\delha (\yba^M-y^{M,N})_{t_{p+1}s}}_{\cb_{\ga'}} .$$

\smallskip

\textit{3$^{\text{rd}}$ case}: $t_p \leq s < t_{p+1} < \ldots < t_q \leq t < t_{q+1}$ with $\lln t-s \rrn \geq 1/M$. Then
\bean
\lefteqn{\delha(\yba^M-y^{M,N})_{ts}}\\
&=& \int_s^t S_{t-u} \, dx^{i,M}_u \, \lc f_i(\yba^M_u)-P_N f_i(y^{M,N}_u) \rc +y^{M,N,\sharp}_{ts}\\
&=&\int_s^t S_{t-u} \, dx^{i,M}_u \, \lc f_i(\yba^M_u)-f_i(y^{M,N}_u) \rc\\
& &\hspace{1cm}+\int_s^t S_{t-u} \, dx^{i,M}_u \, (\id-P_N )f_i(y^{M,N}_u) +y^{M,N,\sharp}_{ts}.
\eean 
According to the two previous lemmas, one can assert that
$$\norm{\int_s^t S_{t-u} \, dx^{i,M}_u \, (\id-P_N )f_i(y^{M,N}_u) +y^{M,N,\sharp}_{ts}}_{\cb_{\ga'}} \leq c_{\psi,x} \lln t-s \rrn^{\ga'} C_{M,N},$$
where we have set $C_{M,N}:= M^{1-(\ga+{\ga'})}+\la_N^{\ga'-\ga}$. Besides, it is not hard to see that
$$\norm{\int_s^t S_{t-u} \, dx^{i,M}_u \, \lc f_i(\yba^M_u)-f_i(y^{M,N}_u) \rc}_{\cb_{\ga'}} \leq c^1_{\psi,x} \lln t-s \rrn^{\ga'} T_0^{\ga-{\ga'}} \cn[\yba^M-y^{M,N};\cacha^{0,{\ga'}}([0,T_0])],$$
for some constant $c^1_{\psi,x}$ that we fix for the rest of the proof.

\smallskip

\noindent
By summing up the three cases, we get
$$\cn[\yba^M-y^{M,N};\cacha_1^{\ga'}([0,T_0];\cb_{\ga'})] \leq c^2_{\psi,x} C_{M,N}+c^1_{\psi,x} T_0^{\ga-{\ga'}} \cn[\yba^M-y^{M,N};\cacha_1^{0,{\ga'}}([0,T_0])] .$$
In order to estimate $\cn[\yba^M-y^{M,N};\cac_1^0([0,T_0],\cb_{\ga'})]$, it now suffices to observe that $\yba^M_s-y^{M,N}_s=\delha(\yba^M-y^{M,N})_{s0}+S_{s}(\psi-P_N \psi)$, and so
\begin{multline*}
\cn[\yba^M-y^{M,N};\cacha_1^{0,{\ga'}}([0,T_0])]\\
\leq \norm{\psi-P_N \psi}_{\cb_{\ga'}}+2 \, c^2_{\psi,x}C_{M,N}+2 \, c^1_{\psi,x} T_0^{\ga-{\ga'}}\cn[\yba^M-y^{M,N};\cacha_1^{0,{\ga'}}([0,T_0])].
\end{multline*}
Thus, pick $T_0$ such that $2\,  c^1_{\psi,x} T_0^{\ga-{\ga'}}=1/2$ to obtain
\begin{equation}\label{result-local-young}
\cn[\yba^M-y^{M,N};\cacha_1^{0,{\ga'}}([0,T_0])] \leq 2\norm{\psi-P_N \psi}_{\cb_{\ga'}}+4\, c^2_{\psi,x}C_{M,N}.
\end{equation}

\smallskip

\textit{Extending the result}: By following the same steps as in the local reasoning, we get, for any $\eta >0$,
\begin{multline*}
\cn[\yba^M-y^{M,N};\cacha_1^{\ga'}([T_0,T_0+\eta];\cb_{\ga'})]\\
 \leq c^2_{\psi,x}C_{M,N}+c^1_{\psi,x} \eta^{\ga-{\ga'}} \cn[\yba^M-y^{M,N};\cacha_1^{0,{\ga'}}([0,T_0+\eta],\cb_{\ga'})] ,
\end{multline*}
which, together with (\ref{result-local-young}), leads to
\begin{multline*}
\cn[\yba^M-y^{M,N};\cacha_1^{\ga'}([0,T_0+\eta];\cb_{\ga'})]\\
 \leq 2\, \norm{\psi-P_N \psi}_{\cb_{\ga'}}+5 \, c^2_{\psi,x}C_{M,N}+c^1_{\psi,x} \eta^{\ga-{\ga'}} \cn[\yba^M-y^{M,N};\cacha_1^{0,{\ga'}}([0,T_0+\eta],\cb_{\ga'})] ,
\end{multline*}
and then
\begin{multline*}
\cn[\yba^M-y^{M,N};\cacha_1^{0,{\ga'}}([0,T_0+\eta])]\\
 \leq 5\norm{\psi-P_N \psi}_{\cb_{\ga'}}+10 \, c^2_{\psi,x}C_{M,N}+2\, c^1_{\psi,x} \eta^{\ga-{\ga'}} \cn[\yba^M-y^{M,N};\cacha_1^{0,{\ga'}}([0,T_0+\eta])] .
\end{multline*}
By taking $\eta=T_0$, we deduce
$$\cn[\yba^M-y^{M,N};\cacha_1^{0,{\ga'}}([0,2T_0])] \leq 10 \, \norm{\psi-P_N \psi}_{\cb_{\ga'}}+20 \, c^2_{\psi,x}C_{M,N}.$$
We repeat the procedure until the whole interval $[0,1]$ is covered.
\end{proof}

\section{Rough case}\label{sec-rough-case}

We now turn to the proof of Theorem \ref{theo-rough}. Thus, let us fix $\ga \in (\frac{1}{3},\frac{1}{2}]$, $\ga'\in (1-\ga,2\ga]$, $\psi \in \cb_{\ga'}$, and suppose that both Assumptions \textbf{(X2)$_\ga$} and \textbf{(F1)$_3$} are satisfied. We will follow (almost) the same steps as in the previous section: we first use pre-existing continuity results to reduce the problem to the study of the Wong-Zakai approximation $\yba^M$, and then lean on a uniform bound for $y^{M,N}$ to control the transition from $\yba^M$ to $y^{M,N}$.

\smallskip

\noindent
Before we start the procedure, let us remind the reader with a few considerations taken from \cite{RHE-glo} on how to understand Equation (\ref{equa-mild}) under Assumption \textbf{(X2)$_\ga$}. As in the Young case, the interpretation is based on the expansion of the ordinary equation: observe that if $\xti$ is a piecewise differentiable path, then
\begin{equation}\label{dec-int-rou-reg}
\int_s^t S_{t-u} \, d\xti^i_u \, f_i(y_u) =X^{\xti,i}_{ts} f_i(y_s)+X^{\xti\xti,ij}_{ts}(f_i'(y_s) \cdot f_j(y_s))+J^y_{ts},
\end{equation} 
where the operator-valued paths $X^{\xti,i},X^{\xti\xti,ij}$ have been defined by (\ref{def-reg-x}) and 
$$J^y_{ts}:=\int_s^t S_{t-u} \, d\xti^i_u \, M^i_{us},$$
with (remember that $a_{ts}:=S_{t-s}-\id$)
\begin{multline}\label{decompo-m}
M^i_{us}:=\int_0^1 dr \, \lc f_i'(y_s+r(\der y)_{us})-f_i'(y_s) \rc \cdot (\der y)_{us}\\
+ \lc a_{us} y_s+\int_s^u S_{u-v} \, d\xti^i_v \, \der(f_i(y))_{vs}+\int_s^u a_{uv} \, dx^j_v \, f_j(y_s) \rc \cdot f_i'(y_s) .
\end{multline}
On top of Lemma \ref{lem-defi-x-x}, one can here rely on the following extension result (which also anticipates the next subsections by introducing the additional path $X^{ax}$):
\begin{lemma}[\cite{RHE}, Proposition 6.3]\label{lem-prol-x-x-x}
The sequence of operator-valued paths
$$X^{ax^M,i}_{ts}:=\int_s^t a_{tu} \, dx^{M,i}_u \quad , \quad resp. \quad X^{x^M x^M,ij}_{ts}:=\int_s^t S_{t-u} \, dx^{M,i}_u \, (\der x^{M,j})_{us},$$
converges to an element $X^{ax,i}$ (resp. $X^{xx,ij}$) with respect to the topology of 
$$\cac_2^{\ga+\ka}(\cl(\cb_{\al+\ka},\cb_\al)) \quad  (\al \geq 0,\ka\in [0,1)),$$
$$\text{resp.} \quad \cac_2^{2\ga-\ka}(\cl(\cb_\al,\cb_{\al+\ka})) \quad (\al \in \R,\ka \in [0,2\ga)).$$
Moreover,
$$\cn[X^{xx,ij};\cac_2^{2\ga-\ka}(\cl(\cb_{\al},\cb_{\al+\ka}))] \leq c_{\al,\ka} \norm{\xrgh}_\ga,$$
$$\cn[X^{x^Mx^M,ij}-X^{xx,ij};\cac_2^{2\ga-\ka}(\cl(\cb_{\al},\cb_{\al+\ka}))] \leq c_{\al,\ka} \{ 1+\norm{\xrgh}_\ga \} \norm{\xrgh-\xrgh^M}_\ga,$$
and the same controls hold for $X^{ax,i}$ in $\cac_2^{\ga+\ka}(\cl(\cb_{\al+\ka},\cb_\al))$. Finally, $X^{ax,i}$ and $X^{xx,ij}$ commute with the projection $P_N$ and satisfy the following algebraic relations:
\begin{equation}
(\delha X^{xx,ij})_{tus}=X^{x,i}_{tu} (\der x^j)_{us} \quad , \quad X^{ax,i}_{ts}=X^{x,i}_{ts}-(\der x^i)_{ts},
\end{equation}
where $X^{x,i}$ is the path given by Lemma \ref{lem-defi-x-x}.
\end{lemma}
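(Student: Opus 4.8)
The plan is to bring everything back to explicit kernel representations that are valid for the regularizing sequence $x^M$ (for which all the integrals below are ordinary Riemann--Stieltjes integrals), and then to let $M\to\infty$ using the convergences $u_M\to 0$ (resp. $v_M\to 0$) together with the smoothing estimates of Subsection \ref{subsec:tools}. The starting point is the identity $\partial_v S_{tv}=-\Delta S_{tv}$, which permits an integration by parts in the time variable; since moreover $\int_s^t\Delta S_{tv}\, dv=a_{ts}$, a short computation (collecting the non-smoothing contributions) gives
\begin{equation*}
X^{x^M,i}_{ts}=(\der x^{M,i})_{ts}\,\id+\int_s^t (\der x^{M,i})_{vs}\,\Delta S_{tv}\, dv,
\end{equation*}
hence $X^{ax^M,i}_{ts}=X^{x^M,i}_{ts}-(\der x^{M,i})_{ts}\,\id=\int_s^t(\der x^{M,i})_{vs}\,\Delta S_{tv}\, dv$, and, after observing that $dx^{M,i}_v\,(\der x^{M,j})_{vs}=d\bigl(\xrgh^{\bt,M,ij}_{vs}\bigr)$,
\begin{equation*}
X^{x^Mx^M,ij}_{ts}=\xrgh^{\bt,M,ij}_{ts}\,\id+\int_s^t\xrgh^{\bt,M,ij}_{vs}\,\Delta S_{tv}\, dv.
\end{equation*}
The algebraic relations $(\delha X^{x^Mx^M,ij})_{tus}=X^{x^M,i}_{tu}(\der x^{M,j})_{us}$ and $X^{ax^M,i}_{ts}=X^{x^M,i}_{ts}-(\der x^{M,i})_{ts}$ follow directly from $S_{tu}=S_{tv}S_{vu}$ and the Chasles-type identities of Proposition \ref{prop:tele}; the commutations with $P_N$ hold because $P_N$ commutes with $S_t$ and with $\int\cdot\, dx^{M,i}$.

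For $X^{ax}$, I would estimate $\int_s^t(\der x^{M,i})_{vs}\,\Delta S_{tv}\, dv$ in $\cl(\cb_{\al+\ka},\cb_\al)$ by splitting $(\der x^{M,i})_{vs}=(\der x^{M,i})_{ts}+(\der x^{M,i})_{vt}$: the first piece is $(\der x^{M,i})_{ts}\,a_{ts}$, controlled by $|t-s|^\ga$ and by the H\"older bound (\ref{hol-reg}) for $a_{ts}$, while the second, bounded via $\norm{\Delta S_{tv}\varphi}_{\cb_\al}\leq c\,|t-v|^{\ka-1}\norm{\varphi}_{\cb_{\al+\ka}}$ and $|(\der x^{M,i})_{vt}|\leq\norm{x^M}_\ga|t-v|^\ga$, reduces to the convergent integral $\int_s^t|t-v|^{\ga+\ka-1}\, dv= c\,|t-s|^{\ga+\ka}$. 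All constants depend on $x^M$ only through $\norm{x^M}_\ga$, so $X^{ax^M,i}-X^{ax^{M'},i}$ is controlled linearly by $\cn[x^M-x^{M'};\cac_1^\ga]$; under Assumption \textbf{(X1)$_\ga$} (resp. \textbf{(X2)$_\ga$}) the sequence is therefore Cauchy, which produces the limit $X^{ax,i}$, the announced bound, and the continuity estimate in $u_M$ (resp. $v_M$). The algebraic relation and the commutation with $P_N$ pass to the limit thanks to Lemma \ref{lem-defi-x-x} and the H\"older convergence $x^M\to x$.

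For $X^{xx}$ — the genuinely rough part, in which the L\'evy area and not $x$ alone intervenes — I would rewrite, using the Chen relation $\xrgh^{\bt,M,ij}_{ts}-\xrgh^{\bt,M,ij}_{tv}-\xrgh^{\bt,M,ij}_{vs}=(\der x^{M,j})_{vs}(\der x^{M,i})_{tv}$ and absorbing the resulting $\id$-term into $\xrgh^{\bt,M,ij}_{ts}(\id+a_{ts})=\xrgh^{\bt,M,ij}_{ts}S_{ts}$,
\begin{equation*}
X^{x^Mx^M,ij}_{ts}=\xrgh^{\bt,M,ij}_{ts}\,S_{ts}-\int_s^t\xrgh^{\bt,M,ij}_{tv}\,\Delta S_{tv}\, dv-\int_s^t(\der x^{M,j})_{vs}(\der x^{M,i})_{tv}\,\Delta S_{tv}\, dv .
\end{equation*}
The first term sends $\cb_\al$ into $\cb_{\al+\ka}$ with exponent $2\ga-\ka$ by the regularizing estimate (\ref{regularisation-semigroupe}) applied to $S_{ts}$ and $|\xrgh^{\bt,M,ij}_{ts}|\leq\norm{\xrgh}_\ga|t-s|^{2\ga}$; the second is handled through $\norm{\Delta S_{tv}\varphi}_{\cb_{\al+\ka}}\leq c\,|t-v|^{-\ka-1}\norm{\varphi}_{\cb_\al}$ and $\int_s^t|t-v|^{2\ga-\ka-1}\, dv= c\,|t-s|^{2\ga-\ka}$, which converges precisely because $\ka<2\ga$. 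For the last ("Chen") term one iterates the splitting of $(\der x^{M,j})_{vs}$, peeling off the truly small contributions (of order $|t-v|^{2\ga}$) and re-expressing the remaining non-smoothing ones in terms of $X^{ax^M,i}$ and of the already-controlled operators $\xrgh^{\bt,M,ij}_{ts}\,S_{ts}$ — or, alternatively, applying Theorem \ref{existence-laha} to those remainders whose third-order norm carries an exponent $>1$. Convergence then follows as in the previous paragraph, the continuity estimate being in $v_M$ since the representation involves $\xrgh^{\bt,M}$ and $\cn[\xrgh^{\bt,M}-\xrgh^\bt;\cac_2^{2\ga}]\to 0$; the relation $(\delha X^{xx,ij})_{tus}=X^{x,i}_{tu}(\der x^j)_{us}$ is obtained by passing the smooth-case identity to the limit.

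I expect the main obstacle to be the treatment of the full range $\ka\in[0,2\ga)$ in the $X^{xx}$ estimate, specifically the Chen term $\int_s^t(\der x^{M,j})_{vs}(\der x^{M,i})_{tv}\,\Delta S_{tv}\, dv$ in the regime $\ka\geq\ga$, where the naive pointwise bound is no longer integrable at $v=t$. The cure is exactly the bookkeeping sketched above: one must not estimate each piece in isolation, but keep track of the non-smoothing $\id$-contributions and reassemble them into operators of the form $\xrgh^{\bt,M,ij}_{ts}\,S_{ts}$, for which $\cb_\al\to\cb_{\al+\ka}$ is bounded. This is also the structural reason why the L\'evy-area datum $\xrgh^\bt$, rather than $x$ alone, necessarily appears in the limit.
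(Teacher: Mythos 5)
A preliminary remark: the paper itself does not prove this lemma --- it is imported from \cite{RHE} without proof --- so there is no internal argument to compare your route with, and I judge the proposal on its own terms. The integration-by-parts representations you derive are correct and settle a substantial part of the statement: the formula $X^{ax^M,i}_{ts}=\int_s^t(\der x^{M,i})_{vs}\,\Delta S_{tv}\,dv$ together with the splitting $(\der x^{M,i})_{vs}=(\der x^{M,i})_{ts}+(\der x^{M,i})_{vt}$ yields the $\cac_2^{\ga+\ka}(\cl(\cb_{\al+\ka},\cb_\al))$ bounds for all $\ka\in[0,1)$, linearity in $x$ gives the Cauchy and $u_M$-type continuity estimates, and the algebraic identities and the commutation with $P_N$ pass to the limit as you indicate. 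Likewise, in your representation of $X^{x^Mx^M,ij}$ the pieces $\xrgh^{\bt,M,ij}_{ts}S_{ts}$ and $\int_s^t\xrgh^{\bt,M,ij}_{tv}\,\Delta S_{tv}\,dv$ are correctly controlled on the whole range $\ka\in[0,2\ga)$.

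The gap sits exactly where you flag it, but the cure you sketch does not close it. After the further splitting $(\der x^{M,j})_{vs}=(\der x^{M,j})_{ts}-(\der x^{M,j})_{tv}$, the Chen term leaves you with $(\der x^{M,j})_{ts}\int_s^t(\der x^{M,i})_{tv}\,\Delta S_{tv}\,dv$, and this object is genuinely too large when $\ka>\ga$: on the eigenmode $e_n$ its coefficient is $(\der x^{M,j})_{ts}\,\la_n\int_s^te^{-\la_n(t-v)}(\der x^{M,i})_{tv}\,dv$, which for a path whose increments near $t$ keep a sign (say $x^{M,i}$ interpolating $u\mapsto -(t-u)^\ga$, an admissible finite-variation datum with bounded $\norm{\xrgh}_\ga$) is of size $|t-s|^\ga\,\la_n^{-\ga}$ as soon as $\la_n|t-s|\geq 1$, whereas membership in $\cac_2^{2\ga-\ka}(\cl(\cb_\al,\cb_{\al+\ka}))$ forces it to be $O(\la_n^{-\ka}|t-s|^{2\ga-\ka})$. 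No regrouping of this same quantity into blocks of the form $\xrgh^{\bt,M,ij}_{ts}S_{ts}$ can change its size, and the alternative you mention --- Theorem \ref{existence-laha} --- is not available here, since $\delha X^{x^Mx^M,ij}=X^{x^M,i}(\der x^{M,j})$ only lies in $\cac_3^{2\ga-\la}$ with $2\ga-\la<1$ in the rough regime, hence outside the domain of $\Laha$. So the regime $\ka\in[\ga,2\ga)$ --- precisely the part of the lemma that goes beyond elementary manipulation of the kernel representation, and the part this paper defers to the construction in \cite{RHE} --- is not established by your argument; whatever proof works there must use input finer than the pathwise bounds $\norm{x^M}_\ga$, $\norm{\xrgh^{\bt,M}}_{2\ga}$ that your sketch exploits (your own representation, read mode by mode, shows these alone cannot deliver the stated smoothing gain beyond $\ka=\ga$), or a correspondingly restricted range of $\ka$.
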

Now, from a heuristic point of view, if we go back to the $\ga$-Hölder path $x$ in (\ref{dec-int-rou-reg}), the expression (\ref{decompo-m}) allows to identify $J^y$ as a $\cb$-valued path of order $\mu:=\inf(3\ga,\ga+\ga')>1$. This (partially) accounts for the definition:

\begin{definition}\label{defi-sol-rough}
Let $\ka \in (0,1)$ and $\psi \in \cb_\ka$. A path $y:[0,1]\to \cb_\ka$ is said to be a rough solution of (\ref{equa-mild}) in $\cb_\ka$ if there exists two parameters $\mu >1,\ep >0$ such that
\begin{equation}\label{defi-sol-rough-2}
y_0=\psi \quad \text{and} \quad \delha y-X^{x,i}f_i(y)-X^{xx,ij}(f_i'(y) \cdot f_j(y)) \in \cac_2^\mu(\cb) \cap \cac_2^\ep(\cb_\ka).
\end{equation}
\end{definition}

\begin{remark}
As reported in \cite[Remark 2.7]{RHE-glo}, the consideration of the topology $\cac_2^\ep(\cb_\ka)$ in the definition (\ref{defi-sol-rough-2}) may be surprising at first sight, since it has no counterpart in the standard rough setting (see \cite{Davie}). In fact, this topology arises from the fundamental estimate (\ref{contr-a-delha-2}), as we shall see in the course of the reasoning.
\end{remark}

In accordance with the decomposition (\ref{dec-int-rou-reg}), one has in particular:

\begin{proposition}[\cite{RHE-glo}, Propositions 2.8 and 2.9]
If $x$ is a piecewise differentiable path (resp. a standard Brownian motion) and if the initial condition $\psi$ belongs to $\cb_{\eta}$ with $\eta \in (0,1)$ (resp. $\eta \in (\frac{1}{2},1)$), then the classical (resp. Stratonovich) solution of (\ref{equa-mild}) is also a rough solution in $\cb_\eta$.
\end{proposition}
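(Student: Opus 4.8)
The plan is to verify directly that the classical (resp. Stratonovich) solution $y$ of (\ref{equa-mild}) satisfies the defining property of Definition \ref{defi-sol-rough}, namely that for suitable $\mu>1$ and $\ep>0$ one has $(\delha y)_{ts}-X^{x,i}_{ts}f_i(y_s)-X^{xx,ij}_{ts}(f_i'(y_s)\cdot f_j(y_s))$ belongs to $\cac_2^\mu(\cb)\cap\cac_2^\ep(\cb_\eta)$. In the piecewise differentiable case this is immediate from the algebraic identity (\ref{dec-int-rou-reg}): since $y$ solves (\ref{eq-base}) we have $(\delha y)_{ts}=\int_s^t S_{tu}\,dx^i_u\,f_i(y_u)$, so the ``residual'' is exactly the process $J^y_{ts}=\int_s^t S_{tu}\,dx^i_u\,M^i_{us}$ with $M^i_{us}$ given by (\ref{decompo-m}). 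It then suffices to exhibit the regularity of $J^y$. First I would observe that $y\in\cacha_1^{\eta}(\cb_\eta)$ and $y\in\cac_1^{\eta}(\cb)$ (the latter because $(\der y)_{us}=(\delha y)_{us}+a_{us}y_s$ and $\norm{a_{us}y_s}_\cb\lesssim|u-s|^\eta\norm{y_s}_{\cb_\eta}$ by (\ref{hol-reg})); this already follows from the existence theory and the regularising estimates of Subsection \ref{subsec:tools}. Then I would bound each of the three blocks inside the bracket of (\ref{decompo-m}) in $\cb$-norm: the $f_i'$-Lipschitz term contributes an $|u-s|^{2\eta}$ factor; the term $a_{us}y_s$ contributes $|u-s|^\eta\norm{y_s}_{\cb_\eta}$; and the two convolutional terms, estimated via Lemma \ref{lem-prol-x-x-x} (for $X^{ax}$) and the regularity $X^{x^M,i}\in\cac_2^{\ga}$ of Lemma \ref{lem-defi-x-x} together with $\der(f_i(y))\in\cac_1^\eta(\cb)$ (use $f_i\in\cac^{1,\mathbf b}$ and $(\der y)$ bounded in $\cb$, via (\ref{contr-f-y-sobo}) or a direct mean-value estimate), contribute $|u-s|^{\ga+\eta}$. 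Collecting these, $M^i\in\cac_2^{\inf(2\eta,\ga,\ga+\eta)}(\cb)$; but one should be slightly more careful and keep the sharp exponents so that the outer integration against $dx^i$ produces the claimed $\mu=\inf(3\ga,\ga+\eta)$ — in the classical (regular $x$) case the integral $\int_s^t S_{tu}\,dx^i_u$ picks up one more unit of regularity, so in fact one gets $J^y\in\cac_2^{1+\inf(\ldots)}(\cb)$, which comfortably exceeds $1$. I would also record that $J^y\in\cac_2^\ep(\cb_\eta)$ for some small $\ep>0$: this uses the regularising property (\ref{regularisation-semigroupe}) to trade time-regularity for Sobolev-regularity, exactly as in the remark $X^x\in\cac_2^{\ga-\la}(\cl(\cb_\ka,\cb_{\ka+\la}))$, so that integrating $M^i$ (bounded in $\cb$) against $S_{tu}\,dx^i_u$ lands in $\cac_2^{\ep}(\cb_\eta)$ with $\ep=\mu-\eta>0$ provided $\mu>\eta$, which holds since $\mu>1>\eta$.

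For the Stratonovich case, the strategy is to pass through the regularising sequence: let $x^M$ be a piecewise differentiable approximation of the Brownian path (e.g. the linear interpolation (\ref{lin-inter})) with $v_M\to 0$, let $\yba^M$ be the corresponding classical solution, and apply the previous paragraph to get $(\delha\yba^M)_{ts}-X^{x^M,i}_{ts}f_i(\yba^M_s)-X^{x^Mx^M,ij}_{ts}(f_i'(\yba^M_s)\cdot f_j(\yba^M_s))\in\cac_2^\mu(\cb)\cap\cac_2^\ep(\cb_\eta)$ with bounds uniform in $M$ (uniformity being inherited from the uniform-in-$M$ a priori estimates on $\yba^M$, analogous to those used in Section \ref{sec-young-case}). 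Then I would pass to the limit $M\to\infty$: $\yba^M\to y$ in the relevant topology by the continuity of the solution map w.r.t.\ the driving rough path (the analogue of (\ref{cont-yg}) in the rough setting, as established in \cite{RHE-glo}), while $X^{x^M,i}\to X^{x,i}$ and $X^{x^Mx^M,ij}\to X^{xx,ij}$ by Lemmas \ref{lem-defi-x-x} and \ref{lem-prol-x-x-x}; combined with the composition estimate (\ref{contr-f-y-sobo}) and the pointwise-product estimates (\ref{pp-1})--(\ref{pp-2}) for $f_i'(\yba^M_s)\cdot f_j(\yba^M_s)$, the ``residual'' converges, and a uniform bound in $\cac_2^\mu(\cb)\cap\cac_2^\ep(\cb_\eta)$ together with convergence in a weaker norm forces the limit residual to lie in $\cac_2^\mu(\cb)\cap\cac_2^\ep(\cb_\eta)$ as well (lower semicontinuity of the semi-norms). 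Finally, one must identify this limit residual with the one built from $y$ directly, i.e.\ check that $\delha y-X^{x,i}f_i(y)-X^{xx,ij}(f_i'(y)\cdot f_j(y))$ is indeed the limit of the corresponding expressions for $\yba^M$; this is where one uses that $y$ is defined as the (rough-path) limit, so that $\delha\yba^M\to\delha y$ as well.

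The main obstacle I anticipate is not the piecewise differentiable case, which is essentially bookkeeping with the toolbox of Subsection \ref{subsec:tools}, but rather ensuring that all the estimates on $M^i$ and $J^y$ are genuinely \emph{uniform in $M$} along the approximating sequence in the Stratonovich case, so that the limiting argument goes through — this requires the uniform a priori bound on $\yba^M$ in $\cacha_1^\eta(\cb_\eta)$, which one has to either cite from \cite{RHE-glo} or re-derive by a Gronwall/patching argument of the type carried out in Proposition \ref{prop-unif-y-m-n}. A secondary subtlety is the borderline value $\eta=\ga'$ can be as small as just above $1-\ga$, so one must check that the exponents $\inf(3\ga,\ga+\eta)$ and $\ep=\mu-\eta$ are strictly greater than $1$ and $0$ respectively throughout the admissible range; since $\ga>\tfrac13$ gives $3\ga>1$ and $\eta>1-\ga$ gives $\ga+\eta>1$, and $\mu>1>\eta$ gives $\ep>0$, this always holds, but it should be stated explicitly to make the choice of $(\mu,\ep)$ in Definition \ref{defi-sol-rough} legitimate.
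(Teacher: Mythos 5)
Your treatment of the piecewise differentiable case follows the route the paper itself indicates just before Definition \ref{defi-sol-rough}: by (\ref{dec-int-rou-reg})--(\ref{decompo-m}) the residual of Definition \ref{defi-sol-rough} is exactly $J^y$, and its membership in $\cac_2^\mu(\cb)\cap\cac_2^\ep(\cb_\eta)$ with $\mu>1$, $\ep>0$ follows from the regularity of $y$ together with the toolbox of Subsection \ref{subsec:tools}, the outer integration against a Lipschitz path supplying the extra unit of time regularity. (One small point: your $\lln u-s\rrn^{2\eta}$ bound on the first block of (\ref{decompo-m}) uses $L^\infty$-control of $(\der y)_{us}$, hence the embedding $\cb_\eta\subset L^\infty$, which needs $\eta>\frac14$; for small $\eta$ one must fall back on the cruder bound $c\,\norm{f_i'}_{\infty}\norm{(\der y)_{us}}_{\cb}\lesssim \lln u-s\rrn^{\eta}$, which still suffices here precisely because of that extra unit of regularity.) This half is essentially sound.

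The Brownian half has a genuine gap. The proposition concerns the \emph{Stratonovich} solution, i.e.\ the process defined through stochastic calculus; it is \emph{not} defined as the limit of the Wong--Zakai approximations $\yba^M$, and the continuity statement (\ref{conti-appli-ito-rough}) only tells you that the $\yba^M$ converge to the \emph{rough} solution. Your final identification step (``one uses that $y$ is defined as the rough-path limit'') therefore assumes exactly what has to be proved: that the Stratonovich solution coincides with the Wong--Zakai (rough) limit is equivalent, via uniqueness of the rough solution, to the very statement of the proposition. To close the argument you must either prove a Wong--Zakai convergence theorem for the Stratonovich mild solution by stochastic-calculus means (convergence of $\yba^M$ to the Stratonovich solution in a topology strong enough to pass the increments and the nonlinear terms to the limit), or verify Definition \ref{defi-sol-rough} for the Stratonovich solution directly, expanding the Stratonovich convolution integral and controlling the residual almost surely through moment and Kolmogorov-type estimates for stochastic convolutions; in either route the hypothesis $\eta\in(\frac12,1)$, which your sketch only touches implicitly through the uniform bounds, is where the restriction genuinely enters. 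Since the present paper gives no proof of this statement (it is quoted from \cite{RHE-glo}), this stochastic-analysis ingredient is precisely the part you would have to supply or cite in full rather than fold into the limiting procedure.
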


\begin{remark}
Let us go back here to the Young setting, i.e., when $\ga >1/2$.  In order to connect the above interpretation of (\ref{equa-mild}) with the concept of a solution derived from Proposition \ref{defi-int-young}, observe the following equivalence: under the assumptions of Theorem \ref{theo-young-abs}, a path $y\in \cacha_1^{\ga'}(\cb_{\ga'})$ is solution of (\ref{equa-mild}) (in the sense of Proposition \ref{defi-int-young}) if and only if $y_0=\psi$ and there exists $\mu>1,\ep>0$ such that $\delha y-X^{x,i}f_i(y) \in \cac_2^\mu(\cb) \cap \cac_2^\ep(\cb_{\ga'})$. Indeed, if $y$ is the solution given by Theorem \ref{theo-young-abs}, then, owing to the contraction property (\ref{contraction-laha}), $\delha y-X^{x,i}f_i(y)=\Laha(X^{x,i} \der f_i(y)) \in \cac_2^{\ga+\ga'}(\cb) \cap \cac_2^\ga(\cb_{\ga'})$. On the other hand, if $\delha y-X^{x,i}f_i(y) \in \cac_2^\mu(\cb) \cap \cac_2^\ep(\cb_{\ga'})$ and $z$ is defined by $z_0=\psi$, $\delha z=X^{x,i}f_i(y)+\Laha(X^{x,i}\der f_i(y))$, one has $\delha (y-z)\in \cac_2^{\tilde{\mu}}(\cb)$, with $\tilde{\mu}=\inf(\mu,\ga+\ga') >1$. As $y_0=z_0$, this easily entails $y=z$.
\end{remark}

\subsection{Previous results}
With the above definition in mind, the main result of \cite{RHE-glo} can be summed up in the following way:

\begin{theorem}[\cite{RHE-glo}, Theorem 2.12]\label{theo-exi-rough}
Under the assumptions of Theorem \ref{theo-rough}, Equation (\ref{equa-mild}) admits a unique rough solution in $\cb_{\ga'}$ in the sense of Definition \ref{defi-sol-rough}. Moreover, if $y$ (resp. $\yti$) is the rough solution in $\cb_{\ga'}$ of (\ref{equa-mild}) associated with a path $x$ (resp. $\xti$) that satisfies \textbf{(X2)$_\ga$}, with initial condition $\psi$ (resp. $\tilde{\psi}$) in $\cb_{\ga'}$, then
\begin{multline}\label{conti-appli-ito-rough}
\cn[y-\tilde{y};\cac_1^0([0,1];\cb_{\ga'})]+\cn[y-\yti;\cacha_1^\ga([0,1];\cb)]\\
 \leq C \lp \norm{\xrgh}_\ga, \norm{\tilde{\xrgh}}_\ga,\norm{\psi}_{\cb_{\ga'}},\norm{\tilde{\psi}}_{\ga'} \rp \lcl \norm{\psi-\tilde{\psi}}_{\cb_{\ga'}}+\norm{\xrgh-\tilde{\xrgh}}_\ga \rcl,
\end{multline} 
for some function $C:(\R^+)^4 \to \R^+$ bounded on bounded sets.
\end{theorem}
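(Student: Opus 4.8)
The plan is to follow the standard rough-path recipe in the convolutional setting of (\ref{equa-mild}): a Banach fixed-point argument in a space of ``twisted-controlled'' paths to produce a local solution, an a priori estimate to globalize it, and finally a re-run of the same estimates on the difference of two solutions to get the continuity bound (\ref{conti-appli-ito-rough}). First I would fix a small time $T_0$ (to be specified later, depending only on $\norm{\xrgh}_\ga$) and introduce the space $\cq_{T_0}$ of paths $y\in\cacha_1^{\ga'}([0,T_0];\cb_{\ga'})$ with $y_0=\psi$ whose remainder $y^\sharp:=\delha y-X^{x,i}f_i(y)-X^{xx,ij}(f_i'(y)\cdot f_j(y))$ lies in $\cac_2^\mu([0,T_0];\cb)\cap\cac_2^\ep([0,T_0];\cb_{\ga'})$, where $\mu:=\inf(3\ga,\ga+\ga')>1$ (this is where $\ga>\frac{1}{3}$ and $\ga'>1-\ga$ come in) and $\ep>0$ is a suitable small exponent. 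On $\cq_{T_0}$ I would set $\Gamma(y)_t:=S_t\psi+\int_0^t S_{tu}\,dx^i_u\,f_i(y_u)$, the rough convolutional integral being built along the lines of (\ref{dec-int-rou-reg})--(\ref{decompo-m}): its ``main'' part contributes $X^{x,i}f_i(y)+X^{xx,ij}(f_i'(y)\cdot f_j(y))$ to $\delha\Gamma(y)$, while its high-order part $J^y$ is recovered, via the map $\Laha$ of Theorem \ref{existence-laha}, from the third-level increment $\delha J^y$, which the algebraic relations (\ref{rel-alg-prod}) together with Lemmas \ref{lem-defi-x-x}--\ref{lem-prol-x-x-x} reduce to explicit $\cac_3^\mu$-processes. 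Combining the composition bound (\ref{contr-f-y-sobo}), the pointwise-product bounds (\ref{pp-1})--(\ref{pp-2}), the regularisation estimate (\ref{regularisation-semigroupe}), the contraction property (\ref{contraction-laha}) of $\Laha$, and the continuity statements of Lemmas \ref{lem-defi-x-x}--\ref{lem-prol-x-x-x}, one checks that $\Gamma$ leaves a ball of $\cq_{T_0}$ invariant and, for $T_0$ small enough (depending only on $\norm{\xrgh}_\ga$), is a strict contraction there; this gives a unique local solution.

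Next I would globalize. The crucial point is that under Assumption \textbf{(F)$_3$} every nonlinear term grows at most polynomially in the controlled-path norm, so that the local contraction time $T_0$ can be chosen to depend only on $\norm{\xrgh}_\ga$, and not on $\norm{\psi}_{\cb_{\ga'}}$ nor on how far we have already progressed. One then applies the local fixed point successively on the intervals $[\ell T_0,(\ell+1)T_0]$, $\ell=0,1,\dots$, and reconstructs the $\cacha_1^{\ga'}(\cb_{\ga'})$-type control on $[0,1]$ from its values on the pieces plus the finitely many junction terms, using the telescopic identity of Proposition \ref{prop:tele}; this produces a global rough solution in $\cb_{\ga'}$. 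Uniqueness on $[0,1]$ then follows because any two rough solutions coincide with the local fixed point on $[0,T_0]$ and the argument propagates interval by interval. (Equivalently --- and this is the route actually taken in \cite{RHE-glo} --- one may instead construct $y$ as the limit of the time-discretization scheme over dyadic partitions, prove a uniform-in-$M$ bound in the spirit of Proposition \ref{prop-unif-y-m-n}, show the scheme is Cauchy along nested partitions by a patching estimate, and identify the limit through Definition \ref{defi-sol-rough}.)

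For the continuity estimate (\ref{conti-appli-ito-rough}) I would feed the same machinery with the difference $y-\yti$. Subtracting the two fixed-point identities, $y-\yti$ satisfies an equation of the same structure with an inhomogeneous term whose $\cac_2$-norms on $[0,T_0]$ are bounded by $\norm{\psi-\tilde{\psi}}_{\cb_{\ga'}}+\norm{\xrgh-\tilde{\xrgh}}_\ga$, thanks to the Lipschitz-in-data continuity of $X^{x,i}$, $X^{xx,ij}$, $X^{ax,i}$ furnished by Lemmas \ref{lem-defi-x-x}--\ref{lem-prol-x-x-x}, the local Lipschitz continuity of $f_i$ and $f_i'$ (Assumption \textbf{(F)$_3$}), and the a priori bounds on $y,\yti$ already obtained. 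A local estimate of the shape
\begin{multline*}
\cn[y-\yti;\cac_1^0([0,T_0];\cb_{\ga'})]+\cn[y-\yti;\cacha_1^{\ga'}([0,T_0];\cb_{\ga'})]\\
\leq c\,\big\{\norm{\psi-\tilde{\psi}}_{\cb_{\ga'}}+\norm{\xrgh-\tilde{\xrgh}}_\ga\big\}+\half\,\big(\cn[y-\yti;\cac_1^0([0,T_0];\cb_{\ga'})]+\cn[y-\yti;\cacha_1^{\ga'}([0,T_0];\cb_{\ga'})]\big)
\end{multline*}
then absorbs, and patching this over $[0,1]$ (again with controlled junction terms) yields (\ref{conti-appli-ito-rough}).

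The step I expect to be the main obstacle is the globalization: because the convolutional $\cac_k^\la$-seminorms carry negative powers of the interval length through (\ref{regularisation-semigroupe}), one has to make sure that the local contraction time can be chosen uniformly and that re-gluing the pieces does not spoil the H\"older-type control --- this is exactly where Assumption \textbf{(F)$_3$} and the sharp semigroup estimates of Subsection \ref{subsec:tools} are indispensable, and it is the reason \cite{RHE-glo} favoured the discretization argument over a naive iteration of the fixed point.
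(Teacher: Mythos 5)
You should first note that this paper does not prove Theorem \ref{theo-exi-rough} at all: it is imported from \cite{RHE-glo}, and, as the introduction recalls, the route taken there is a time-discretization of the equation, not a direct fixed-point argument. Your parenthetical remark therefore names the actual strategy, but your principal argument contains a genuine gap at precisely the step you yourself flag: globalization. The justification ``every nonlinear term grows at most polynomially in the controlled-path norm, so the contraction time $T_0$ can be chosen to depend only on $\norm{\xrgh}_\ga$'' is not valid: a uniform-in-data local time requires (at most) affine estimates, whereas in the regime of Theorem \ref{theo-rough} one has $\ga'>1-\ga>\frac{1}{2}$, so the composition estimate available in $\cb_{\ga'}$ is the quadratic one (the bound following (\ref{contr-f-y-sobo}) for $\ka\in[\frac{1}{2},1)$, used in the paper itself in (\ref{serie-fourier}) in the form $\norm{f_i(y_s)}_{\cb_{\ga'}}\leq c\{1+\norm{y_s}_{\cb_{\ga'}}^2\}$). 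With quadratic self-bounds the invariant-ball/contraction step forces $T_0$ to depend on the size of the solution, and when you iterate interval by interval the running bound grows while the admissible step shrinks, so the induction need not reach time $1$. This is exactly the contrast with the Young case, where $\ga'<\frac{1}{2}$ makes the relevant estimates affine and the uniform-$T_0$ induction of Proposition \ref{prop-unif-y-m-n} closes; and it is the reason the global rough solution was obtained in \cite{RHE-glo} through the discretized scheme, where the troublesome quadratic contributions come accompanied by small factors (positive powers of the mesh there, the factor $N^{1-2\ga'}$ in the fully discrete analogue treated in Propositions \ref{prop:contr-j-m-n} and \ref{prop:cont-unif-y-m-n} of this paper), which is what allows a step-by-step induction over a horizon depending only on $\norm{\xrgh}_\ga$.

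Your treatment of the continuity estimate (\ref{conti-appli-ito-rough}) is acceptable in outline \emph{once} global existence and a priori bounds for both solutions are in hand, since the constant is allowed to depend on all four norms, so a local time depending on the data plus finite patching suffices; but as written it presupposes the global bounds whose proof is the missing piece. To make the proposal complete you would have to either carry out the discretization route you only mention in passing (the uniform-in-$M$ bound is the real work, in the spirit of Proposition \ref{prop:contr-j-m-n}), or explain concretely how the quadratic growth in $\cb_{\ga'}$ is absorbed in the continuous fixed-point setting; as it stands, the assertion that $T_0$ depends only on $\norm{\xrgh}_\ga$ is exactly the point that fails.
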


As in the Young case, we denote by $\yba^M$ the Wong-Zakai solution of (\ref{equa-mild}), which corresponds to the classical (or equivalently rough) solution of the equation when $x$ is replaced with $x^{2^M}$. The continuity result (\ref{conti-appli-ito-rough}) allows us to control the transition from $y$ to $\yba^M$:

\begin{corollary}\label{cor:wz-rough}
Under the assumptions of Theorem \ref{theo-rough}, there exists a function $C:(\R^+)^2 \to \R^+$ bounded on bounded sets such that for every $M\in \N$,
$$\cn[y-\yba^M;\cac_1^0([0,1];\cb_{\ga'})]+\cn[y-\yba^M;\cacha_1^\ga([0,1];\cb)] \leq C(\norm{\xrgh}_\ga,\norm{\psi}_{\cb_{\ga'}}) \cdot \norm{\xrgh-\xrgh^{2^M}}_\ga.$$
\end{corollary}

Now we must notice that the time-discretization of the equation has been analyzed in \cite{RHE-glo} as well. In other words, we already know how to control the difference between $\yba^M$ and the path $y^M$ generated by the following intermediate Milstein scheme: $y^{M}_0= \psi$ and
\begin{equation}\label{intermediate-milstein-scheme}
y^{M}_{t_{k+1}} =S_{t_{k+1}-t_k} y^{M}_{t_k}+ X^{x^{2^M},i}_{t_{k+1}t_k} f_i(y^{M}_{t_k})+X^{x^{2^M} x^{2^M},ij}_{t_{k+1}t_k} \lp f_i'(y^{M}_{t_k}) \cdot f_j(y^{M}_{t_k}) \rp,
\end{equation}
where $t_k=t_k^M=\frac{k}{2^M}$. To express this result, let us denote by $(\Pi^M)$ the sequence of dyadic partitions of $[0,1]$, and introduce the two paths
$$K^M_{ts}:=(\delha y^M)_{ts}-X^{x^{2^M},i}_{ts} f_i(y^{M}_{s}) \quad , \quad J^M_{ts}:=K^M_{ts}-X^{x^{2^M} x^{2^M},ij}_{ts} \lp f_i'(y^{M}_{s}) \cdot f_j(y^{M}_{s}) \rp,$$
for every $s<t\in \Pi^M$. Since all of these paths are defined over the points of the partition only (and not on the whole interval $[0,1]$), we consider in the sequel the discrete version of the norms introduced in Subsection \ref{subsec:tools}. Thus, for any $M\in \N$, we set $\llbracket a,b \rrbracket_M:=[a,b] \cap \Pi^M$ and 
$$\cn[h;\cacha_1^\la(\llbracket t_p^n,t_q^n \rrbracket_M,\cb_{\al,p})] :=\sup_{\substack{t_p^n \leq s <t\leq t_q^n\\ s,t\in \Pi^M}} \frac{\norm{(\delha h)_{ts}}_{\cb_{\al,p}}}{\lln t-s\rrn^\la},$$
We define the quantities 
$$\cn[.;\cac_1^0(\llbracket a,b \rrbracket_M;\cb_{\al,p})]\quad , \quad \cn[.;\cac_2^\la(\llbracket a,b \rrbracket_M;\cb_{\al,p})] \quad , \quad \cn[.;\cac_3^\la(\llbracket a,b \rrbracket_M;\cb_{\al,p})],$$
along the same line.

\begin{proposition}[\cite{RHE-glo}]\label{prop:passage-wz-discret}
Under the assumptions of Theorem \ref{theo-exi-rough}, for every 
$$0< \be < \inf\lp \ga+\ga'-1,\ga-\ga'+\frac{1}{2} \rp,$$
there exists a function $C=C_\be:(\R^+)^2 \to \R^+$ bounded on bounded sets such that for every $M\in \N$,
\begin{equation}\label{passage-wz-discret}
\cn[\yba^M-y^{M};\cac_1^0(\llbracket 0,1\rrbracket_M;\cb_{\ga'})]+\cn[\yba^M-y^{M};\cacha_1^\ga(\llbracket 0,1\rrbracket_M;\cb)] \leq \frac{C( \norm{\xrgh}_\ga,\norm{\psi}_{\cb_{\ga'}} )}{(2^M)^\be},
\end{equation}
where $y^M$ is the path generated by the intermediate Milstein scheme (\ref{intermediate-milstein-scheme}). Moreover, there exists another function $C':(\R^+)^2 \to \R^+$ bounded on bounded sets such that the following uniform control holds: For every $M\in \N$,
\begin{equation}\label{uni-cont-y-m}
\cn[y^{M};\cac_1^0(\llbracket 0,1\rrbracket_M;\cb_{\ga'})]+\cn[y^{M};\cacha_1^\ga(\llbracket 0,1\rrbracket_M;\cb)]+\cn[K^{M};\cac_2^{2\ga}(\llbracket 0,1\rrbracket_M;\cb)]
\leq c_{x,\psi}.
\end{equation}
where $c_{x,\psi}:=C'( \norm{\xrgh}_\ga,\norm{\psi}_{\cb_{\ga'}})$.
\end{proposition}

\begin{remark}
To be more specific about the reference for this result, the bound (\ref{passage-wz-discret}) follows from \cite[Estimate (61)]{RHE-glo}, while (\ref{uni-cont-y-m}) is the consequence of \cite[Estimates (53)-(55)]{RHE-glo}.
\end{remark}

It now remains to study the transition from $y^M$ to $y^{M,N}$, which is the purpose of the two following subsections.

\subsection{A uniform control}
The aim here is to exhibit a uniform estimate for $y^{M,N}$, to which we will extensively appeal in the next subsection. As in the time-discretization procedure, the two following paths will play a prominent role in our reasoning: for every $M,N$ and every $s<t\in \Pi^M$, define
$$K^{M,N}_{ts}:=(\delha y^{M,N})_{ts}-X^{x^{2^M},i}_{ts} P_N f_i(y^{M,N}_{s}),$$
$$J^{M,N}_{ts}:=K^{M,N}_{ts}-X^{x^{2^M} x^{2^M},ij}_{ts}P_N \lp f_i'(y^{M,N}_{s}) \cdot P_Nf_j(y^{M,N}_{s}) \rp.$$

\begin{proposition}\label{prop:cont-unif-y-m-n}
There exists two constants $C_1,C_2 >0$ and an integer $k\geq 2$ such that for every $M ,N\in \N$,
\begin{multline}\label{ineq:contr-uni}
\cn[y^{M,N};\cacha_1^\ga(\llbracket 0,1\rrbracket_M;\cb)]+\cn[y^{M,N};\cac_1^0(\llbracket 0,1\rrbracket_M;\cb_{\ga'})]+\cn[K^{M,N};\cac_2^{2\ga}(\llbracket 0,1\rrbracket_M;\cb)]\\
\leq C_1 \{ 1+\norm{\psi}_{\cb_{\ga'}} \} \exp\lp C_2 \norm{\xrgh}_\ga^k \rp. 
\end{multline}
\end{proposition}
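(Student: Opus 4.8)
The strategy mirrors the uniform control obtained in the Young case (Proposition~\ref{prop-unif-y-m-n}), but now carried out on the dyadic grid $\Pi^M$ with second-order expansions, and it will closely follow the structure of the a priori estimate for the intermediate scheme in Proposition~\ref{prop:passage-wz-discret}. First I would fix a small time $T_0=T_0(\norm{\xrgh}_\ga)>0$ and prove, by induction on $l$, a bound of the form
\begin{equation*}
\cn[y^{M,N};\cacha_1^\ga(\llbracket 0,lT_0\rrbracket_M;\cb)]+\cn[y^{M,N};\cac_1^0(\llbracket 0,lT_0\rrbracket_M;\cb_{\ga'})]+\cn[K^{M,N};\cac_2^{2\ga}(\llbracket 0,lT_0\rrbracket_M;\cb)]\leq R_l,
\end{equation*}
with $R_l=R_l(\norm{\xrgh}_\ga,\norm{\psi}_{\cb_{\ga'}})$ and $R_0=\norm{\psi}_{\cb_{\ga'}}$; a finite number of steps then covers $[0,1]$. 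The base case and the ``gluing'' across $lT_0$ are handled exactly as in the Young proof (triangle inequality on $\delha$), so the real work is the local estimate on an interval of length $T_0$.

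For the local step, I would first derive from the definition of $y^{M,N}$ a telescopic expansion of $(\delha y^{M,N})_{ts}$ for arbitrary $s<t\in\Pi^M$ in the spirit of Lemma~\ref{lem-interm-young}, writing $(\delha y^{M,N})_{ts}$ as a genuine convolutional integral against $x^{2^M}$ plus a remainder assembled from the $\Laha$-terms $r^{M,N}$. The key algebraic point is that $J^{M,N}=K^{M,N}-X^{xx}(\cdots)$ should be identified, via $\delha J^{M,N}$ and Theorem~\ref{existence-laha}, as a $\cb$-valued process of order $\mu=\inf(3\ga,\ga+\ga')>1$: one computes $\delha K^{M,N}$ using $\delha X^{x^{2^M},i}=0$ and the relation $(\delha X^{xx,ij})_{tus}=X^{x,i}_{tu}(\der x^j)_{us}$ from Lemma~\ref{lem-prol-x-x-x}, and then $\delha J^{M,N}$ in terms of increments of $f_i(y^{M,N})$ and $f_i'(y^{M,N})\cdot P_N f_j(y^{M,N})$, the latter bounded through the composition and pointwise-product estimates (\ref{contr-f-y-sobo})--(\ref{pp-2}) and the Banach-algebra property of $\cb_{\ga'}$ (recall $\ga'>1-\ga>1/2$, so $\ga'>1/4$). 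This yields $J^{M,N}\in\cac_3^\mu(\cb)\cap\mathrm{Im}\,\delha$ with norm controlled by a polynomial in the (not-yet-bounded) quantities on the left of (\ref{ineq:contr-uni}) restricted to $\llbracket 0,(l+1)T_0\rrbracket_M$, and the contraction (\ref{contraction-laha}) converts this into bounds on $J^{M,N}$ in $\cac_2^\mu(\cb)$ and $\cac_2^{\mu-\ga'}(\cb_{\ga'})$. The crucial gain is that every estimate on the local interval carries a factor $T_0^{\ga-\ga'}$ (or a higher positive power of $T_0$), coming from the Hölder exponent mismatch, so after collecting terms one gets
\begin{equation*}
\cn[y^{M,N};\cdots]+\cn[K^{M,N};\cdots]\ \leq\ c\,T_0^{\ga-\ga'}\bigl(1+\cn[y^{M,N};\cdots]+\cn[K^{M,N};\cdots]\bigr)+R_l,
\end{equation*}
and choosing $T_0$ so that $c\,T_0^{\ga-\ga'}\le 1/2$ closes the induction with $R_{l+1}$ an explicit affine function of $R_l$ and $\norm{\psi}_{\cb_{\ga'}}$. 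Throughout, the uniformity in $N$ is for free since $X^{x,i},X^{xx,ij},X^{ax,i}$ all commute with $P_N$ and $\norm{P_N}\le 1$ on every $\cb_\ka$, so $P_N$ never degrades a constant; the uniformity in $M$ comes from Lemmas~\ref{lem-defi-x-x} and~\ref{lem-prol-x-x-x}, whose constants depend on $\norm{\xrgh}_\ga$ only (and $v_M\to 0$, hence $\norm{x^{2^M}}_\ga$ and the associated area norms stay bounded by $c\norm{\xrgh}_\ga$).

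The main obstacle I anticipate is the bookkeeping in the telescopic remainder when $s,t$ are not grid points and the sub-interval lengths near $s$ and $t$ are smaller than the grid mesh: as in Lemma~\ref{lem-interm-young} one must split off the boundary pieces $r^{M,N}_{tt_q}$, $S_{ts}r^{M,N}_{st_p}$ and the sum $\sum_k S_{tt_{k+1}}r^{M,N}_{t_{k+1}t_k}$, and control the discrete sum $\frac{1}{2^M}\sum_k |t-t_{k+1}|^{-\ga'}$ by $c\,|t-s|^{1-\ga'}$ — this requires $\ga'<1$, which holds, but the interplay of the three exponents $\ga,\ga',\mu$ and the constraint $\ga+\ga'-1>0$ must be checked carefully so that every residual term is genuinely of positive order in $|t-s|$ and carries a positive power of $T_0$. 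A secondary technical point is that $f_i'(y^{M,N})\cdot P_N f_j(y^{M,N})$ involves an extra projection on the \emph{inside}, so when estimating its $\cac_1^{\ga'}(\cb)$- and $\cac_1^0(\cb_{\ga'})$-norms one uses $\norm{P_N g}_{\cb_{\ga'}}\le\norm{g}_{\cb_{\ga'}}$ and $\norm{P_N g}_{\cb}\le\norm{g}_{\cb}$ together with (\ref{pp-1}); no new difficulty, but it must be tracked. Once the local estimate is in place, the extension argument is identical to the one at the end of the Young-case proof of this subsection.
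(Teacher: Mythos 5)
There is a genuine gap, and it sits exactly where your plan says the argument closes. You propose to end the local step with an inequality that is \emph{affine} in the unknown triple norm, with a small factor $T_0^{\ga-\ga'}$, and to absorb by choosing $T_0$ small. First, in the rough regime $\ga<\tfrac12<1-\ga<\ga'$, so $\ga-\ga'<0$ and $T_0^{\ga-\ga'}$ is not a small factor at all; the exponents that are actually available and positive are $\ga+\ga'-\mu$, $\ga-(\ga'-\tfrac12)-\ep$ and $\ga'-\ga$. Second, and more fundamentally, the bounds one can prove on $\delha J^{M,N}$ are \emph{not} affine in $\cn[y^{M,N};\cacha_1^\ga(\cb)]+\cn[y^{M,N};\cac_1^0(\cb_{\ga'})]+\cn[K^{M,N};\cac_2^{2\ga}(\cb)]$: since $\ga'\geq\tfrac12$, the composition estimate in $\cb_{\ga'}$ is quadratic, and since $P_N$ is not bounded on $L^\infty$, the factor $\norm{P_N f_j(y^{M,N}_s)}_{L^\infty}$ appearing in the second-order terms (e.g.\ in $X^{xx,ij}_{tu}P_N\der\lp f_i'(y^{M,N})\cdot P_Nf_j(y^{M,N})\rp_{us}$ and in the term with $\lc f_i'(y_s+r(\der y)_{us})-f_i'(y_s)\rc(\der x^j)_{us}$) must be estimated through a Sobolev norm, which produces contributions of the form $N^{1-2\ga'}\cn[y^{M,N};\cac_1^0(\cb_{\ga'})]^2$ as well as products of two unknown norms (cf.\ Lemma \ref{lem:contr-delha-j} and (\ref{serie-fourier})). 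A fixed small $T_0$ cannot absorb a right-hand side that is quadratic or cubic in the unknown, so your induction does not close; and this is precisely why your claim that ``uniformity in $N$ is for free because $\norm{P_N}\leq 1$'' misses the real difficulty. The paper's proof has to take $N\geq N_0(\norm{\xrgh}_\ga,\norm{\psi}_{\cb_{\ga'}})$ and $M\geq M_0$, and runs a \emph{double} induction (Proposition \ref{prop:contr-j-m-n}): over the grid points of $\Pi^M$ inside each window $[kT_0,(k+1)T_0]$, propagating the bounds $\cn[J^{M,N};\cac_2^\mu(\cb)]$ and $\cn[J^{M,N};\cac_2^\ep(\cb_{\ga'})]\leq 1+\norm{y^{M,N}_{kT_0}}_{\cb_{\ga'}}$ by using that $(\delha J^{M,N})_{tus}$ with $t\leq t^M_{q+1}$ only involves the scheme up to time $t^M_q$, the quadratic terms being tamed by the largeness of $N$ rather than the smallness of $T_0$; and then over the finitely many windows, updating the $N$-thresholds at each step before concluding as in Theorem 2.10 of the earlier paper.

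A second, related mismatch: $J^{M,N}$ is defined only on the grid $\Pi^M$, so Theorem \ref{existence-laha} and the operator $\Laha$ are not the tools that convert bounds on $\delha J^{M,N}$ into bounds on $J^{M,N}$ (one cannot identify a purely discrete object as $\Laha$ of its $\delha$-image without already knowing it is of order $>1$). The correct device is the discrete sewing Lemma \ref{lem:contr-a}, whose hypothesis is exactly $J^{M,N}_{t^M_{k+1}t^M_k}=0$ on consecutive grid points --- the defining property of the Milstein scheme --- and your plan never invokes this vanishing. Your general architecture (windowing, computing $\delha J^{M,N}$ via $\delha X^{x,i}=0$ and $(\delha X^{xx,ij})_{tus}=X^{x,i}_{tu}(\der x^j)_{us}$, using the composition and product estimates) is the right starting point, but without the discrete sewing lemma, the $N$-threshold mechanism and the grid-point induction, the key estimate does not close.
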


Proposition \ref{prop:cont-unif-y-m-n} is actually a spin-off of the following local bounds on $J^{M,N}$:

\begin{proposition}\label{prop:contr-j-m-n}
Fix $\ep,\mu$ such that
$$\ga+\ga' >\mu >1 \quad , \quad \ga-(\ga'-\frac{1}{2}) >\ep >0.$$
Then there exists two integers $M_0=M_0(\norm{\xrgh}_\ga)$, $N_0=N_0(\norm{\xrgh}_\ga,\norm{\psi}_{\cb_{\ga'}})$ and a time $T_0=T_0(\norm{\xrgh }_\ga) >0$, $T_0\in \Pi^{M_0}$, such that for every $M\geq M_0,N\geq N_0$ and every $k\in \N$,
\begin{equation}\label{cont-j-m-n-1}
\cn[J^{M,N};\cac_2^\mu(\llbracket kT_0,(k+1)T_0 \wedge 1 \rrbracket_M;\cb)] \leq 1+\norm{y^{M,N}_{kT_0}}_{\cb_{\ga'}}
\end{equation}
and
\begin{equation}\label{cont-j-m-n-2}
\cn[J^{M,N};\cac_2^\ep(\llbracket kT_0,(k+1)T_0 \wedge 1 \rrbracket_M;\cb_{\ga'})] \leq 1+\norm{y^{M,N}_{kT_0}}_{\cb_{\ga'}}.
\end{equation}
\end{proposition}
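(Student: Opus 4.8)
The plan is to establish (\ref{cont-j-m-n-1}) and (\ref{cont-j-m-n-2}) by a local fixed-point-type estimate on a small interval $[kT_0,(k+1)T_0\wedge 1]$, exactly mirroring the structure of the time-discretization analysis in \cite{RHE-glo} but with the projection $P_N$ inserted throughout. First I would expand $J^{M,N}_{ts}$ for $s<t\in\Pi^M$ using the telescopic sum property (Proposition \ref{prop:tele}) over the grid points $s=t_p<t_{p+1}<\dots<t_q=t$ of $\Pi^M$ lying in $[s,t]$, together with the scheme relation (\ref{milstein-scheme}) and the algebraic identities of Lemma \ref{lem-prol-x-x-x}, namely $(\delha X^{xx,ij})_{tus}=X^{x,i}_{tu}(\der x^j)_{us}$ and $\delha X^{x,i}=0$. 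The point of this expansion is to rewrite $J^{M,N}$ on the grid as a sum of a "smooth remainder" coming from the discrete increments of $P_Nf_i(y^{M,N})$ and $P_N(f_i'(y^{M,N})\cdot P_Nf_j(y^{M,N}))$, contracted against $X^{x^{2^M},i}$, $X^{x^{2^M}x^{2^M},ij}$ and the $a$-type operators $X^{ax^{2^M},i}$. Each such piece carries either a factor $|t-s|^{3\ga}$ or $|t-s|^{\ga+\ga'}$ in $\cb$, hence belongs to $\cac_2^\mu$ for $\mu=\inf(3\ga,\ga+\ga')>1$, and an analogous computation in $\cb_{\ga'}$ yields the $\cac_2^\ep$ bound; the constants are uniform in $N$ because the operator norms of $X^{x^{2^M},\cdot}$, $X^{x^{2^M}x^{2^M},\cdot}$, $X^{ax^{2^M},\cdot}$ are bounded by $\norm{\xrgh}_\ga$ (Lemmas \ref{lem-defi-x-x}, \ref{lem-prol-x-x-x}) and $P_N$ is a contraction on every $\cb_\ka$.

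The substance of the estimate is then a self-referential inequality: the right-hand side of the expansion of $\cn[J^{M,N};\cac_2^\mu([kT_0,(k+1)T_0];\cb)]$ involves, via the Sobolev composition bounds (\ref{contr-f-y-sobo}) and the pointwise-product bounds (\ref{pp-1}), the H\"older norms $\cn[y^{M,N};\cacha_1^\ga(\llbracket kT_0,(k+1)T_0\rrbracket_M;\cb)]$ and the sup-norm $\cn[y^{M,N};\cac_1^0(\llbracket kT_0,(k+1)T_0\rrbracket_M;\cb_{\ga'})]$, which in turn — using the definition $K^{M,N}=(\delha y^{M,N})-X^{x^{2^M},i}P_Nf_i(y^{M,N})$ and $J^{M,N}=K^{M,N}-X^{x^{2^M}x^{2^M},ij}P_N(f_i'(y^{M,N})\cdot P_Nf_j(y^{M,N}))$ — can be expressed back in terms of $\norm{y^{M,N}_{kT_0}}_{\cb_{\ga'}}$ plus small multiples of $\cn[J^{M,N};\cac_2^\mu(\cdots;\cb)]+\cn[J^{M,N};\cac_2^\ep(\cdots;\cb_{\ga'})]$. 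The small multiples carry powers $T_0^{\ga-\ep}$, $T_0^{\mu-1}$, $T_0^{3\ga-\mu}$, etc., so choosing $T_0$ small enough (depending only on $\norm{x}_\ga$) absorbs these terms into the left-hand side and leaves the free bound $1+\norm{y^{M,N}_{kT_0}}_{\cb_{\ga'}}$. The constants $M_0,N_0$ enter because two things must hold on the scale $1/2^M$: first, the "bad" discrete corrections (the pieces of $J^{M,N}$ that do not telescope cleanly, analogous to the $y^{M,N,\sharp}$ terms of the Young case) must be absorbed, requiring $2^{M}$ large; second, the projection error terms involving $(\Id-P_N)f_i(y^{M,N})$ and $(\Id-P_N)(f_i'\cdot\ldots)$ must be controlled using (\ref{projection}) — here one writes $X^{x^{2^M}x^{2^M},ij}P_N(\cdots)=X^{x^{2^M}x^{2^M},ij}(\cdots)-X^{x^{2^M}x^{2^M},ij}(\Id-P_N)(\cdots)$ and uses the extra $\la_N^{-(\ga'-\ep)}$ smallness — which is where the threshold $N_0(\norm{\xrgh}_\ga,\norm{\psi}_{\cb_{\ga'}})$ comes from. (One may initially run the argument with $N_0$ depending only on $\norm{\xrgh}_\ga$ and then note the dependence on $\norm{\psi}$ is inherited through the eventual uniform bound.)

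The hardest step is the bookkeeping of the telescopic expansion of $J^{M,N}$ over the dyadic grid while keeping the projections in the right places: because $P_N$ does not commute with the nonlinear compositions, the discrete increments $\der\big(P_N(f_i'(y^{M,N})\cdot P_Nf_j(y^{M,N}))\big)_{vs}$ must be split into a term involving $\der(f_i'(y^{M,N}))_{vs}\cdot P_Nf_j(y^{M,N})_s$ and a term involving $f_i'(y^{M,N})_v\cdot P_N(\der f_j(y^{M,N}))_{vs}$, each handled by a mean-value expansion of $f_i'$ (hence the need for $\textbf{(F)}_3$) and the product estimates (\ref{pp-1})--(\ref{pp-2}), all while tracking that the resulting three-index quantities lie in $\cac_3^{\mu}$ so that $\Laha$ (Theorem \ref{existence-laha}) can be applied to recover the $\cac_2$ bound. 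This is precisely the convolutional-rough-path analogue of the Milstein remainder analysis in \cite{RHE-glo}, and the only genuinely new feature is the insertion of $P_N$; I would organize the computation so that every occurrence of $(\Id-P_N)$ is peeled off first, estimated by (\ref{projection}), and the remainder is then literally the computation of \cite{RHE-glo}. Once (\ref{cont-j-m-n-1})--(\ref{cont-j-m-n-2}) are in hand on each block $[kT_0,(k+1)T_0\wedge 1]$, Proposition \ref{prop:cont-unif-y-m-n} follows by the same patching/iteration argument as in the Young case (Proposition \ref{prop-unif-y-m-n}): reconstruct $\cn[y^{M,N};\cacha_1^\ga(\cb)]$ and $\cn[y^{M,N};\cac_1^0(\cb_{\ga'})]$ from $J^{M,N}$ via the triangle inequality using Lemmas \ref{lem-defi-x-x}--\ref{lem-prol-x-x-x}, obtain a recursion $R_{l+1}\leq c(1+R_l)$ for the radii on $[0,lT_0]$, and conclude that the bound is finite and uniform in $M,N$ up to time $1$; the cases $M<M_0$ or $N<N_0$ are trivially bounded since only finitely many schemes are involved.
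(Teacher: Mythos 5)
Your overall strategy---estimate $\delha J^{M,N}$ in $\cac_3^\mu(\cb)$ and $\cac_3^\ep(\cb_{\ga'})$ after peeling off the $(\Id-P_N)$ errors, pass to $\cac_2$ bounds for $J^{M,N}$ by a sewing-type argument, and close by taking $T_0$ small---is indeed the paper's strategy, but two points in your closing argument are off. First, the discrete structure: by the very definition of the Milstein scheme (\ref{milstein-scheme}), $J^{M,N}_{t_{k+1}^M t_k^M}=0$ for consecutive points of $\Pi^M$, so the correct tool is the discrete sewing Lemma \ref{lem:contr-a} (applied to $A=J^{M,N}$), not the continuous map $\Laha$ of Theorem \ref{existence-laha} (which requires a process defined on the whole simplex and lying in $\mathrm{Im}\,\delha$), and there are no leftover ``non-telescoping'' corrections analogous to $y^{M,N,\sharp}$ that would have to be absorbed by taking $2^M$ large. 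In the paper $M_0$ plays only the minor role of guaranteeing that a time $T_0\leq T_0'(\norm{\xrgh}_\ga)$ can be chosen inside $\Pi^{M_0}$.

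Second, and more substantively, your claim that the constants are uniform in $N$ ``because $P_N$ is a contraction on every $\cb_\ka$'' misses where the threshold $N_0$ really comes from. The pointwise products $f_i'(y^{M,N})\cdot P_Nf_j(y^{M,N})$ are estimated through $\norm{P_Nf_j(y^{M,N}_s)}_{L^\infty}$, which is not uniformly controlled in $N$; the paper bounds it by $1+\norm{(P_N-\id)f_j(y^{M,N}_s)}_{L^\infty}\leq 1+c\,N^{1-2\ga'}\norm{f_j(y^{M,N}_s)}_{\cb_{\ga'}}\leq 1+c\,N^{1-2\ga'}\{1+\norm{y^{M,N}_s}_{\cb_{\ga'}}^2\}$, the square arising from the composition estimate in $\cb_{\ga'}$ with $\ga'>1/2$. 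Consequently the bounds of Lemma \ref{lem:contr-delha-j} are \emph{quadratic} in the unknown $\cn[y^{M,N};\cac_1^0(\cdot;\cb_{\ga'})]$, with the quadratic part damped only by $N^{1-2\ga'}$, so your plan to ``absorb small multiples into the left-hand side by choosing $T_0$ small'' does not close as stated: the self-referential inequality is not linear in the unknown. The paper closes it by a forward induction over the grid points $t_q^M$ inside each block (assume (\ref{cont-j-m-n-1})--(\ref{cont-j-m-n-2}) up to $t_q^M$, use them to bound $y^{M,N}$, $\delha y^{M,N}$ and $K^{M,N}$ there, then deduce the bounds up to $t_{q+1}^M$), and at the $k$-th block it enlarges the threshold $N_0$ so that $N^{1-2\ga'}$ times the square of the running bound on $\norm{y^{M,N}_{kT_0}}_{\cb_{\ga'}}$ stays below $1$; since only $L=L(\norm{\xrgh}_\ga)$ blocks are needed, the final $N_0$ depends on both $\norm{\xrgh}_\ga$ and $\norm{\psi}_{\cb_{\ga'}}$---genuinely, and not as an afterthought inherited from the eventual uniform bound, as your parenthetical suggests.
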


The proof of Proposition \ref{prop:contr-j-m-n} resorts to the following technical lemmas:

\begin{lemma}[\cite{RHE-glo}, Lemma 3.2]\label{lem:contr-a}
Let $\ep >0$ and $\mu >1$. There exists a constant $c=c_{\ep,\mu}$ such that for every $M\in \N$ and any path $A:\cs_2 \to \cb_{\ga'}$ satisfying $A_{t_{k+1}^M t_k^M}=0$ for $k\in \{0,\ldots,2^M-1\}$, one has
\begin{equation}\label{contr-a-delha-1}
\norm{A_{ts}}_{\cb} \leq c \lln t-s \rrn^\mu \cn[\delha A;\cac_3^\mu(\llbracket s,t\rrbracket_M;\cb)]
\end{equation}
and
\begin{equation}\label{contr-a-delha-2}
\norm{A_{ts}}_{\cb_{\ga'}} \leq c \lcl \lln t-s \rrn^\ep+\lln t-s \rrn^{\mu-\ga'} \rcl \lcl  \cn[\delha A;\cac_3^\mu(\llbracket s,t\rrbracket_M;\cb)]+\cn[\delha A;\cac_3^\ep(\llbracket s,t\rrbracket_M;\cb_{\ga'})] \rcl.
\end{equation}
for all $s<t \in \Pi^M$.
\end{lemma}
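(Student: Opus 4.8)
The plan is to treat both inequalities by a \emph{discrete sewing} argument adapted to the convolutional increment $\delha$: given $s<t\in \Pi^M$, one reconstructs $A_{ts}$ from the atoms $A_{t^M_{k+1}t^M_k}=0$ by successively coarsening the grid $\Pi^M\cap[s,t]=\{s=u_0<\dots<u_n=t\}$, $n=(t-s)2^M$. For a sub-partition $\pi=\{u_i\}$ of $[s,t]$ taken from $\Pi^M$ set $A^\pi_{ts}:=\sum_i S_{tu_{i+1}}A_{u_{i+1}u_i}$; the semigroup property $S_{tu_{i+1}}S_{u_{i+1}u_i}=S_{tu_i}$ together with the definition (\ref{incr-twis}) of $\delha$ gives, when one deletes an interior point $u_i$ from $\pi$ to obtain $\pi'$,
\begin{equation}\label{plan-step}
A^{\pi'}_{ts}-A^\pi_{ts}=S_{tu_{i+1}}(\delha A)_{u_{i+1}u_i u_{i-1}}.
\end{equation}
Since $A^{\Pi^M\cap[s,t]}_{ts}=0$ (all atoms vanish) and $A^{\{s,t\}}_{ts}=A_{ts}$, summing (\ref{plan-step}) along any sequence of single-point deletions expresses $A_{ts}$ as a telescoping series of terms $S_{tu_{i+1}}(\delha A)_{u_{i+1}u_iu_{i-1}}$, the neighbours being those present in the partition at the moment of deletion.

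For (\ref{contr-a-delha-1}) I would delete points greedily, at each stage removing an interior point $u_i$ minimizing $|u_{i+1}-u_{i-1}|$: when the current partition has $r$ sub-intervals, the elementary bound $\sum_{i\text{ interior}}|u_{i+1}-u_{i-1}|\le 2(t-s)$ produces such a point with $|u_{i+1}-u_{i-1}|\le 2(t-s)/(r-1)$. Using that $S$ is a contraction on $\cb$, so that $\|S_{tu_{i+1}}(\delha A)_{u_{i+1}u_iu_{i-1}}\|_\cb\le \cn[\delha A;\cac_3^\mu(\llbracket s,t\rrbracket_M;\cb)]\,|u_{i+1}-u_{i-1}|^\mu$, and summing over $r=n,\dots,2$ yields
$$\|A_{ts}\|_\cb\le 2^\mu\Big(\sum_{j\ge1}j^{-\mu}\Big)\,|t-s|^\mu\,\cn[\delha A;\cac_3^\mu(\llbracket s,t\rrbracket_M;\cb)],$$
the series converging since $\mu>1$; this is exactly (\ref{contr-a-delha-1}).

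For (\ref{contr-a-delha-2}) the coarsening must be dyadic rather than greedy, because $\cn[\delha A;\cac_3^\ep(\cb_{\ga'})]$ only carries the exponent $\ep$, which may be $\le 1$. Starting from $w_0=s$, I let $w_{j+1}$ be the dyadic midpoint of $[w_j,t]$ in $\Pi^M$, so $|t-w_{j+1}|\simeq|t-w_j|/2\simeq 2^{-j-1}|t-s|$, and iterate $A_{tw_j}=A_{tw_{j+1}}+S_{tw_{j+1}}A_{w_{j+1}w_j}+(\delha A)_{tw_{j+1}w_j}$ down to the scale $2^{-M}$, where $A_{tw_J}=0$. The middle term is handled by combining the already proved (\ref{contr-a-delha-1}) on $[w_j,w_{j+1}]$ with the regularization estimate (\ref{regularisation-semigroupe}):
$$\|S_{tw_{j+1}}A_{w_{j+1}w_j}\|_{\cb_{\ga'}}\le c\,|t-w_{j+1}|^{-\ga'}|w_{j+1}-w_j|^\mu\,\cn[\delha A;\cac_3^\mu(\cb)]\le c\,(2^{-j}|t-s|)^{\mu-\ga'}\,\cn[\delha A;\cac_3^\mu(\cb)],$$
while $\|(\delha A)_{tw_{j+1}w_j}\|_{\cb_{\ga'}}\le (2^{-j}|t-s|)^\ep\cn[\delha A;\cac_3^\ep(\cb_{\ga'})]$. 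Both are summable in $j$ since $\ep>0$ and $\mu>1>\ga'$, and summing produces the right-hand side of (\ref{contr-a-delha-2}).

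The routine but delicate points are: (i) the combinatorial lemma guaranteeing, at each greedy step, an interior point with $|u_{i+1}-u_{i-1}|$ of order $(t-s)/r$ — classical, going back to Young, but requiring the telescoping identity (\ref{plan-step}) to be written out with care — and (ii) checking that all constants in the dyadic recursion for (\ref{contr-a-delha-2}) are uniform in $M$ and in the location of $[s,t]$; in particular one must verify that the dyadic midpoint in $\Pi^M$ of an interval of $n$ grid steps always splits it into two pieces of comparable length, so that the halving bound $|t-w_{j+1}|\simeq|t-w_j|/2$ holds with a fixed constant until the scale $2^{-M}$ is reached. Neither is a genuine obstruction, but (ii) is where one must be attentive to keep the final estimate independent of $M$.
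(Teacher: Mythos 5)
Your argument is correct, and it is essentially the discrete sewing argument that underlies the cited result: this paper does not reprove Lemma \ref{lem:contr-a} (it is imported from \cite{RHE-glo}), so the only thing to check is that your reconstruction is sound, and it is. The deletion identity $A^{\pi'}_{ts}-A^\pi_{ts}=S_{tu_{i+1}}(\delha A)_{u_{i+1}u_iu_{i-1}}$ follows exactly from $S_{tu_i}=S_{tu_{i+1}}S_{u_{i+1}u_i}$ and the definition (\ref{incr-twis}); combined with the Young-type greedy choice of the deleted point, the contraction of $S$ on $\cb$ and $\mu>1$, this gives (\ref{contr-a-delha-1}) with $c=2^\mu\sum_j j^{-\mu}$, uniformly in $M$ since all triples stay in $\llbracket s,t\rrbracket_M$. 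For (\ref{contr-a-delha-2}), your geometric coarsening toward $t$ is the right device: the middle term is controlled by (\ref{regularisation-semigroupe}) together with the already-proved $\cb$-estimate on $[w_j,w_{j+1}]$, and both series converge geometrically because $\mu-\ga'>0$ and $\ep>0$; the two caveats you flag are indeed the only delicate points, and both are benign — if $t-w_j$ consists of $n_j\geq 2$ grid steps, the midpoint in $\Pi^M$ splits it into pieces of length between $n_j/3$ and $2n_j/3$ grid steps, so the decay ratio is a fixed constant until the single-step scale, where $A_{tw_J}=0$, and the resulting bound is independent of $M$ and of the location of $[s,t]$. The only cosmetic remark is that you obtain $|t-s|^{\mu-\ga'}\cn[\delha A;\cac_3^\mu(\cb)]+|t-s|^\ep\cn[\delha A;\cac_3^\ep(\cb_{\ga'})]$, which is dominated by (and slightly sharper than) the product form stated in (\ref{contr-a-delha-2}).
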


\begin{lemma}\label{lem:contr-delha-j}
There exists a constant $c>0$ such that for every $M\in \N$ and $s<t \in \Pi^M$,
\begin{multline}\label{contr-delha-j-m-n-1}
\cn[\delha J^{M,N};\cac_3^\mu(\llbracket s,t\rrbracket_M;\cb)]\leq c \lcl 1+\norm{\xrgh}_\ga^2\rcl \lln t-s \rrn^{\ga+\ga'-\mu} \\
\lcl \cn[y^{M,N};\cac_1^0(\llbracket s,t\rrbracket_M);\cb_{\ga'}]+\cn[y^{M,N};\cacha_1^\ga(\llbracket s,t\rrbracket_M);\cb]+\cn[K^{M,N};\cac_2^{2\ga}(\llbracket s,t\rrbracket_M);\cb]\rcl\\
\lcl 1+\la_N^{1/2-\ga'} \cn[y^{M,N};\cac_1^0(\llbracket s,t \rrbracket_M;\cb_{\ga'})]^2 \rcl
\end{multline}
and
\begin{multline}\label{contr-delha-j-m-n-2}
\cn[\delha J^{M,N};\cac_3^\ep(\llbracket s,t \rrbracket_M;\cb_{\ga'})]\leq c \lcl 1+\norm{\xrgh}_\ga^2\rcl  \lln t-s \rrn^{\ga-(\ga'-\frac{1}{2})-\ep}\\
\lcl 1+\cn[y^{M,N};\cac_1^0(\llbracket s,t \rrbracket_M;\cb_{\ga'})]\rcl \lcl 1+\la_N^{1/2-\ga'} \cn[y^{M,N};\cac_1^0(\llbracket s,t \rrbracket_M;\cb_{\ga'})]^2 \rcl.
\end{multline}
\end{lemma}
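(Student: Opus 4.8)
The starting point is the algebraic relation \eqref{rel-alg-prod} together with the definition of $J^{M,N}$ and the twisted-coboundary relations satisfied by $X^{x^{2^M}}$ and $X^{x^{2^M}x^{2^M}}$ from Lemmas \ref{lem-defi-x-x} and \ref{lem-prol-x-x-x}. First I would compute $\delha K^{M,N}$: since $\delha X^{x^{2^M},i}=0$ and $K^{M,N}=\delha y^{M,N}-X^{x^{2^M},i}P_N f_i(y^{M,N})$, applying $\delha$ and using $\delha(\delha y^{M,N})=0$ gives $\delha K^{M,N}_{tus}=X^{x^{2^M},i}_{tu}\,(\der P_N f_i(y^{M,N}))_{us}$. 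Then, writing $J^{M,N}=K^{M,N}-X^{x^{2^M}x^{2^M},ij}P_N(f_i'(y^{M,N})\cdot P_N f_j(y^{M,N}))$ and using $(\delha X^{x^{2^M}x^{2^M},ij})_{tus}=X^{x^{2^M},i}_{tu}(\der x^{j,2^M})_{us}$ (the discrete-level version of the relation in Lemma \ref{lem-prol-x-x-x}, valid since $x^{2^M}$ is piecewise differentiable), I obtain a formula of the schematic form
\begin{equation*}
\delha J^{M,N}_{tus}=X^{x^{2^M},i}_{tu}\Bigl[\bigl(\der P_N f_i(y^{M,N})\bigr)_{us}-P_N\bigl(f_i'(y^{M,N}_s)\cdot P_N(\der x^{i})^{\,\cdots}\bigr)\Bigr]-\bigl(\der X^{x^{2^M}x^{2^M},ij}\bigr)\,P_N(\cdots).
\end{equation*}
The point is that the bracket is precisely the first-order Taylor remainder of $f_i$ along $y^{M,N}$, corrected by a term that reconstitutes (through the scheme's own defining identity for $(\delha y^{M,N})$ over one mesh step) the increment $K^{M,N}_{us}$ plus the curvature term; this is the exact analogue of the computation behind the decomposition \eqref{decompo-m}, now carried out at the discrete level with the projections $P_N$ inserted. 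So the plan is to reorganize $\delha J^{M,N}$ into a sum of: (a) a Taylor-remainder term $X^{x^{2^M},i}_{tu}\cdot P_N\bigl[\int_0^1(f_i'(y^{M,N}_s+r(\der y^{M,N})_{us})-f_i'(y^{M,N}_s))\,dr\cdot(\der y^{M,N})_{us}\bigr]$; (b) a term involving $X^{x^{2^M},i}_{tu}\cdot P_N(f_i'(y^{M,N}_s)\cdot K^{M,N}_{us})$; (c) a term involving $X^{x^{2^M},i}_{tu}\cdot P_N(f_i'(y^{M,N}_s)\cdot (\cdots a\text{-terms}\cdots))$ where the $a$-terms come from $a_{us}y^{M,N}_s$ and the residual pieces of the one-step scheme; and (d) lower-order commutator terms between $P_N$ and the pointwise products.

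Next I would bound each piece using the toolbox of Subsection \ref{subsec:tools} and the regularity estimates for $X^{x^{2^M}}$, $X^{ax^{2^M}}$, $X^{x^{2^M}x^{2^M}}$ from Lemmas \ref{lem-defi-x-x}--\ref{lem-prol-x-x-x} (with constants controlled by $\norm{\xrgh}_\ga$, uniformly in $M$ since $v_{2^M}$ is bounded). For the $\cb$-valued estimate \eqref{contr-delha-j-m-n-1}: the Taylor-remainder term (a) carries two factors of $(\der y^{M,N})$, hence two factors measured in $\cac_1^\ga(\cb)$; to land in $\cb$ one uses the Banach-algebra/pointwise-product bounds \eqref{pp-1}--\eqref{pp-2}, and the need to put one of these two factors into $L^\infty\subset\cb_{\ga'}$ (Sobolev inclusion, $\ga'>1/4$) is exactly where the factor $\cn[y^{M,N};\cac_1^0(\cb_{\ga'})]$ — and, because of the extra $P_N$ forcing us to go through $\cb_{\ga'}$ rather than $\cb_{1/2^-}$, the quadratic term $N^{1-2\ga'}\cn[y^{M,N};\cac_1^0(\cb_{\ga'})]^2$ with its projection-norm penalty $\norm{(\id-P_N)}_{\cl(\cb_{\ga'},\cb_{1/2})}\lesssim N^{1-2\ga'}$ from \eqref{projection} — enters. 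Term (b) contributes the $\cn[K^{M,N};\cac_2^{2\ga}(\cb)]$ summand; term (c) contributes, via \eqref{hol-reg} for $a_{us}$ and Lemma \ref{lem-prol-x-x-x} for $X^{ax^{2^M}}$, the $\cn[y^{M,N};\cacha_1^\ga(\cb)]$ summand (the $a$-term $a_{us}y^{M,N}_s$ being of order $\ga'$ in $\cb$, which combines with $\ga$ from $X$ to give the claimed Hölder exponent $\ga+\ga'-\mu$ after absorbing $\lln t-s\rrn$). The commutator/remainder terms in (d) are of strictly higher order and harmless. The $\cb_{\ga'}$-valued estimate \eqref{contr-delha-j-m-n-2} follows the same pattern, except that one loses $\ga'$ derivatives when measuring in $\cb_{\ga'}$ instead of $\cb$ and gains $1/2$ from the product structure (one factor in $\cb_{1/2}$ via Sobolev and the composition bound), producing the exponent $\ga-(\ga'-\frac12)-\ep$; here $\ep>0$ is the slack needed so that $X^{x^{2^M}}\in\cac_2^{\ga-\la}$-type regularization can absorb the mismatch, i.e. one trades a bit of Hölder regularity of the increments for the gain in Sobolev index. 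In both cases one finally takes the supremum over $s<u<t$ in $\Pi^M$ and the constants are manifestly of the stated form $C(\norm{\xrgh}_\ga)$, independent of $M$ and $N$ (the $N$-dependence having been made explicit).

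The main obstacle, as in the analogous step of \cite{RHE-glo}, is bookkeeping: correctly reassembling $\delha J^{M,N}$ from the scheme's one-step identity so that the "good" increments $K^{M,N}_{us}$ and $(\der y^{M,N})_{us}$ reappear with the right operator coefficients, all while carrying the projection $P_N$ through every pointwise product without destroying the algebra structure (this is where \eqref{pp-1} vs.\ \eqref{pp-2}, and the choice of whether $(\id-P_N)$ acts before or after a product, matters) and tracking the exact power of $N$ that each insertion of $(\id-P_N)$ costs. The Hölder exponents $\ga+\ga'-\mu$ and $\ga-(\ga'-\frac12)-\ep$ are positive precisely by the hypotheses $\ga+\ga'>\mu>1$ and $\ga-(\ga'-\frac12)>\ep>0$, so once the decomposition is in place every term is genuinely small in $\lln t-s\rrn$, and summing the contributions yields \eqref{contr-delha-j-m-n-1}--\eqref{contr-delha-j-m-n-2}.
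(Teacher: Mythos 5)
Your overall strategy coincides with the paper's: apply $\delha$ to $J^{M,N}$ using $\delha X^{x^{2^M},i}=0$, $(\delha X^{x^{2^M}x^{2^M},ij})_{tus}=X^{x^{2^M},i}_{tu}(\der x^{2^M,j})_{us}$ and (\ref{rel-alg-prod}), then substitute the defining identity $(\der y^{M,N})_{us}=K^{M,N}_{us}+a_{us}y^{M,N}_s+X^{ax^{2^M},j}_{us}P_Nf_j(y^{M,N}_s)+(\der x^{2^M,j})_{us}P_Nf_j(y^{M,N}_s)$ and estimate term by term with the semigroup, composition, product and projection bounds (your account of where the $N^{1-2\ga'}$ factor and the exponent $\ga-(\ga'-\frac12)-\ep$ come from is consistent with the paper). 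However, there is a genuine gap in your term (a). You group the Taylor remainder with $(\der y^{M,N})_{us}$, i.e. you keep the piece $X^{x^{2^M},i}_{tu}P_N\bigl(\int_0^1 dr\,[f_i'(y_s+r(\der y)_{us})-f_i'(y_s)]\cdot(\der y)_{us}\bigr)$ as such, and propose to bound it by putting one factor $(\der y)_{us}$ in $L^\infty\subset\cb_{\ga'}$ via $\cn[y^{M,N};\cac_1^0(\cb_{\ga'})]$ and the other in $\cb$ via $\cn[y^{M,N};\cacha_1^\ga(\cb)]$. But the $L^\infty$ factor then carries no smallness in $\lln u-s\rrn$ (the right-hand side of (\ref{contr-delha-j-m-n-1}) contains no H\"older control of $y^{M,N}$ in $\cb_{\ga'}$, and $K^{M,N}$ is controlled in $\cb$ only, so interpolation cannot recover it for all admissible $\ga'$, e.g.\ $\ga'$ close to $2\ga$). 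The term is thus only $O(\lln t-s\rrn^{2\ga})$, and since $2\ga<1<\mu$ this does not yield a $\cac_3^\mu$ bound uniform in $M$ (on $\Pi^M$ the quotient $\lln t-s\rrn^{2\ga-\mu}$ blows up like $2^{M(\mu-2\ga)}$). So the decomposition you borrow from the continuous-time expansion (\ref{decompo-m}) does not transfer as is.

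The paper's fix is a different grouping: keep the full averaged derivative $\int_0^1 dr\,f_i'(y_s+r(\der y)_{us})$ (merely bounded in $L^\infty$ by $\norm{f_i'}_\infty$) against the pieces $K^{M,N}_{us}$, $a_{us}y_s$ and $X^{ax^{2^M},j}_{us}P_Nf_j(y_s)$ (its terms $I$ and $II$, each already of order $\geq\ga+\ga'$ once the factor $\lln t-u\rrn^\ga$ from $X^{x^{2^M},i}$ is included), and introduce the Taylor remainder only against the scalar increment $(\der x^{2^M,j})_{us}$ times $P_Nf_j(y_s)$ (term $III$). There the remainder contributes $\norm{(\der y)_{us}}_\cb\lesssim\lln u-s\rrn^\ga$, the increment of $x$ contributes another $\lln u-s\rrn^\ga$, and $P_Nf_j(y_s)$ is put in $L^\infty$, which is precisely where $\norm{(P_N-\id)f_j(y_s)}_{L^\infty}\lesssim N^{1-2\ga'}\norm{f_j(y_s)}_{\cb_{\ga'}}$ produces the quadratic $N^{1-2\ga'}$-factor; the total order $3\ga\geq\ga+\ga'$ is then sufficient. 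Equivalently, you could repair your plan by substituting the scheme identity for $(\der y)_{us}$ a second time inside your quadratic term, which reduces it to the paper's terms. Also note that your item (d) should be the term $X^{x^{2^M}x^{2^M},ij}_{tu}P_N\der\bigl(f_i'(y^{M,N})\cdot P_Nf_j(y^{M,N})\bigr)_{us}$ coming from (\ref{rel-alg-prod}), not merely a commutator correction; it is estimated like $III$ and is another source of the $N^{1-2\ga'}$ factor.
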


\begin{proof}
See Appendix A.

\end{proof}

\begin{proof}[Proof of Proposition \ref{prop:contr-j-m-n}]
For the sake of clarity, we write here $x$ for $x^{2^M}$.

\smallskip

\noindent
\textit{Step 1: $k=0$}. This is an iteration procedure over the points of the partition. Assume that both inequalities (\ref{cont-j-m-n-1}) and (\ref{cont-j-m-n-2}) hold true on $\llbracket 0,t_q^M\rrbracket_M$ and $t_{q+1}^M \leq T_0$ ($T_0$ will actually be precised in the course of the reasoning). Then, for every $t\in \llbracket 0,t_q^M\rrbracket_M$, one has, thanks to Lemmas \ref{lem-defi-x-x} and \ref{lem-prol-x-x-x},
\bean
\lefteqn{\norm{y^{M,N}_t}_{\cb_{\ga'}}}\\
&\leq & \norm{J^{M,N}_{t0}}_{\cb_{\ga'}}+\norm{S_{t} \psi}_{\cb_{\ga'}}+\norm{X^{x,i}_{t0} P_N f_i(\psi)}_{\cb_{\ga'}}+\norm{X^{xx,ij}_{t0} P_N (f_i'(\psi) \cdot P_Nf_j(\psi))}_{\cb_{\ga'}}\\
&\leq & 1+2\norm{\psi}_{\cb_{\ga'}}+c \norm{\xrgh}_\ga \lcl t^{\ga-(\ga'-\frac{1}{2})} \norm{f_i(\psi)}_{\cb_{1/2}}+t^{2\ga-(\ga'-\eta)} \norm{f_i'(\psi) \cdot P_Nf_j(\psi)}_{\cb_\eta} \rcl,
\eean
where $\eta \in (0,\frac{1}{2})$ is picked such that $2\ga>\ga'-\eta$. Now, due to (\ref{contr-f-y-sobo}), it holds that $\norm{f_i(\psi)}_{\cb_{1/2}} \leq c\{ 1+\norm{\psi}_{\cb_{\ga'}} \}$ and as in the subsequent estimates (\ref{traitement-prod}) and (\ref{serie-fourier}), one has
$$\norm{f_i'(\psi) \cdot P_Nf_j(\psi)}_{\cb_\eta}\leq c \{ 1+\norm{\psi}_{\cb_{\ga'}} \} \lcl 1+\frac{\norm{\psi}_{\cb_{\ga'}}^2}{\la_N^{\ga'-1/2}}\rcl,$$
so
$$\cn[y^{M,N};\cac_1^0(\llbracket 0,t_q^M\rrbracket_M;\cb_{\ga'})] \leq c^1 \lcl 1+\norm{\xrgh}_\ga\rcl \{ 1+\norm{\psi}_{\cb_{\ga'}} \} \lcl 1+\frac{\norm{\psi}_{\cb_{\ga'}}^2}{\la_N^{\ga'-1/2}}\rcl.$$
At this point, let us introduce an integer $N_0^{1}$ such that for every $N\geq N_0^{1}$, $\frac{\norm{\psi}_{\cb_{\ga'}}^2}{\la_{N}^{\ga'-1/2}}\leq 1$, which entails $$\cn[y^{M,N};\cac_1^0(\llbracket 0,t_q^M\rrbracket_M;\cb_{\ga'})] \leq 2c^1\lcl 1+\norm{\xrgh}_\ga\rcl \{ 1+\norm{\psi}_{\cb_{\ga'}} \}.$$
Besides, if $s<t\in \llbracket 0,t_q^M \rrbracket_M$, one has
\bean
\lefteqn{\norm{(\delha y^{M,N})_{ts}}_{\cb}}\\
&\leq & \norm{J^{M,N}_{ts}}_{\cb}+\norm{X^{x,i}_{ts} P_N f_i(y^{M,N}_s)}_{\cb}+\norm{X^{xx,ij}_{ts} P_N (f_i'(y^{M,N}_s) \cdot P_Nf_j(y^{M,N}_s))}_{\cb}\\
&\leq &  \lln t-s \rrn^\ga \lcl 1+\norm{\psi}_{\cb_{\ga'}}+c\norm{\xrgh}_\ga \rcl,
\eean
and
\bean
\norm{K^{M,N}_{ts}}_{\cb} &\leq & \norm{J^{M,N}_{ts}}_{\cb}+\norm{X^{xx,ij}_{ts} P_N (f_i'(y^{M,N}_s) \cdot P_Nf_j(y^{M,N}_s))}_{\cb}\\
&\leq &  \lln t-s \rrn^{2\ga} \lcl 1+\norm{\psi}_{\cb_{\ga'}}+c\norm{\xrgh}_\ga \rcl.
\eean
By using (\ref{contr-delha-j-m-n-1}), we deduce that for every $N\geq N_0^{1}$ (remember that $t_{q+1}^M \leq T_0$),
\bean
\lefteqn{\cn[\delha J^{M,N};\cac_3^\mu(\llbracket 0,t_{q+1}^M \rrbracket_M;\cb)]}\\
&\leq & c\, T_0^{\ga+\ga'-\mu}\lcl 1+\norm{\xrgh}_\ga^3\rcl \{ 1+\norm{\psi}_{\cb_{\ga'}} \}  \lcl 1+\frac{\cn[y^{M,N};\cac_1^0(\llbracket 0,t_q^M \rrbracket_M;\cb_{\ga'})]^2}{\la_N^{\ga'-1/2}} \rcl\\
&\leq & c\,  T_0^{\ga+\ga'-\mu}\lcl 1+\norm{\xrgh}_\ga^5 \rcl \{ 1+\norm{\psi}_{\cb_{\ga'}} \}  \lcl 1+\frac{\norm{\psi}_{\cb_{\ga'}}^2}{\la_N^{\ga'-1/2}} \rcl\\
&\leq & c\,  T_0^{\ga+\ga'-\mu}\lcl 1+\norm{\xrgh}_\ga^5 \rcl \{ 1+\norm{\psi}_{\cb_{\ga'}} \}  ,
\eean
and in the same way, according to (\ref{contr-delha-j-m-n-2}),
$$\cn[\delha J^{M,N};\cac_3^\ep(\llbracket 0,t_{q+1}^M \rrbracket_M;\cb_{\ga'})]\leq c\, T_0^{\ga-(\ga'-\frac{1}{2})-\ep}\lcl 1+\norm{\xrgh}_\ga^5\rcl\lcl 1+\norm{\psi}_{\cb_{\ga'}} \rcl  .$$
Then, thanks to (\ref{contr-a-delha-1}) and (\ref{contr-a-delha-2}) (applied to $A=J^{M,N}$), we obtain, for every $N\geq N_0^{1}$,
$$\cn[J^{M,N};\cac_2^\mu(\llbracket 0,t^M_{q+1} \rrbracket_M;\cb)] \leq c^2 \lcl 1+\norm{\xrgh}_\ga^5\rcl \lcl T_0^{\ga+\ga'-\mu}+T_0^{\ga-(\ga'-\frac{1}{2})-\ep}\rcl \lcl 1+\norm{\psi}_{\cb_{\ga'}} \rcl,$$
and
$$\cn[J^{M,N};\cac_2^\ep(\llbracket 0,t^M_{q+1} \rrbracket_M;\cb_{\ga'})] \leq c^2 \lcl 1+\norm{\xrgh}_\ga^5\rcl  \lcl T_0^{\ga+\ga'-\mu}+T_0^{\ga-(\ga'-\frac{1}{2})-\ep}\rcl \lcl 1+\norm{\psi}_{\cb_{\ga'}} \rcl.$$
Consider now a real $T_0^\ast >0$ such that 
$$c^2\lcl 1+\norm{\xrgh}_\ga^5\rcl  \lcl (T_0^\ast)^{\ga+\ga'-\mu}+(T_0^\ast)^{\ga-(\ga'-\frac{1}{2})-\ep}\rcl\leq 1$$
and let $M_0=M_0(\norm{\xrgh}_\ga)$ be an integer such that $1/(2^{M_0})\leq T_0^\ast$. We fix $T_0$ in the non empty set $(0,T_0^\ast) \cap \Pi^{M_0}$ so as to retrieve the expected controls, namely: for every $M\geq M_0,N\geq N_0^{1}$,
$$\cn[J^{M,N};\cac_2^\mu (\llbracket 0,t_{q+1}^M\rrbracket_M;\cb)] \leq 1+\norm{\psi}_{\ga'},\quad \cn[J^{M,N};\cac_2^\ep (\llbracket 0,t_{q+1}^M\rrbracket_M;\cb_{\ga'})] \leq 1+\norm{\psi}_{\ga'},$$
which completes Step 1, that is to say the proof of (\ref{cont-j-m-n-1}) and (\ref{cont-j-m-n-2}) on $\llbracket 0,T_0 \rrbracket_M$.

\smallskip

\noindent
\textit{Step 2: $k=1$}. We henceforth fix $M\geq M_0$. With the same arguments as in Step 1, we first deduce, if both controls (\ref{cont-j-m-n-1}) and (\ref{cont-j-m-n-2}) are checked on $\llbracket T_0,t_q^M\rrbracket_M$ (with $t_{q+1}^M \leq 2T_0$),
\begin{equation}\label{contr-step-2}
\cn[y^{M,N};\cac_1^0(\llbracket T_0,t_q^M\rrbracket_M;\cb_{\ga'})] \leq c^1 \lcl 1+\norm{\xrgh}_\ga\rcl  \lcl 1+\norm{y^{M,N}_{T_0}}_{\cb_{\ga'}} \rcl \lcl 1+\frac{\norm{y^{M,N}_{T_0}}_{\cb_{\ga'}}^2}{\la_N^{\ga'-1/2}}\rcl.
\end{equation}
Remember that for every $N\geq N_0^{1}$, $\norm{y^{M,N}_{T_0}}_{\cb_{\ga'}} \leq 2c^1 \{ 1+\norm{\xrgh}_\ga\} \{ 1+\norm{\psi}_{\cb_{\ga'}} \}$. Consequently, we introduce an integer $N_0^{2} \geq N_0^{1}$ such that for every $N\geq N_0^{2}$, 
$$\frac{\norm{y^{M,N}_{T_0}}_{\cb_{\ga'}}^2}{\la_N^{\ga'-1/2}}\leq \frac{(2c^1 \lcl 1+\norm{\xrgh}_\ga\rcl \{ 1+\norm{\psi}_{\cb_{\ga'}} \})^2}{\la_N^{\ga'-1/2}}\leq 1,$$
and (\ref{contr-step-2}) entails, for every $N\geq N_0^{2}$,
$$\cn[y^{M,N};\cac_1^0(\llbracket T_0,t_q^M\rrbracket_M;\cb_{\ga'})] \leq 2c^1 \lcl 1+\norm{\xrgh}_\ga\rcl \lcl 1+\norm{y^{M,N}_{T_0}}_{\cb_{\ga'}} \rcl.$$
Then, with the same estimates as in Step 1 (replace $\llbracket 0,t_q^M\rrbracket_M$ with $\llbracket T_0,t_q^M\rrbracket_M$ and $\psi$ with $y^{M,N}_{T_0}$), we get, for every $N\geq N_0^{2}$,
$$\cn[J^{M,N};\cac_2^\mu(\llbracket T_0,t^M_{q+1} \rrbracket_M;\cb)] \leq   1+\norm{y^{M,N}_{T_0}}_{\cb_{\ga'}},$$
and the same bound holds for $\cn[J^{M,N};\cac_2^\ep(\llbracket T_0,t^M_{q+1} \rrbracket_M;\cb_{\ga'})]$, which completes the proof of (\ref{cont-j-m-n-1}) and (\ref{cont-j-m-n-2}) on $\llbracket T_0,2T_0 \rrbracket_M$.

\smallskip

\noindent
We repeat the procedure until Step $L$, where $L=L(\norm{\xrgh}_\ga)$ is the smallest integer such that $L T_0 \geq 1$.

\end{proof}

Once endowed with the estimates of Proposition \ref{prop:contr-j-m-n}, the proof of Proposition \ref{prop:cont-unif-y-m-n} is derived from a standard patching argument. Note in particular that $T_0$ can be taken such as $T_0=c \norm{\xrgh}_\ga^{-k}$ for some constant $c>0$ and some integer $k\geq 2$, which accounts for the bound (\ref{ineq:contr-uni}). We refer the reader to the proof of \cite[Theorem 2.10]{RHE-glo} for further details on the procedure.

\subsection{Space discretization}\label{subsec:disc-spa-rou}
We are now in a position to compare $y^M$ with $y^{M,N}$. To this end, let us introduce the following intermediate quantity: for every $s<t\in \Pi^M$,
\begin{multline*}
\cn[y^M-y^{M,N};\cq(\llbracket s,t \rrbracket_M)]:= \cn[y^M-y^{M,N};\cac_1^0(\llbracket s,t\rrbracket_M;\cb_{\ga'})]\\
+\cn[y^M-y^{M,N};\cacha_1^\ga(\llbracket s,t\rrbracket_M;\cb)]+\cn[K^M-K^{M,N};\cac_2^{2\ga}(\llbracket s,t\rrbracket_M;\cb)].
\end{multline*}

\begin{lemma}\label{lem:space}
For every $\eta \in (0,\ga+\ga'-1)$, there exists a function $C_\eta : (\R^+)^2 \to \R^+$ bounded on bounded sets such that for every $M,N\in \N$ and every $s<t \in \Pi^M$,
$$\cn[\delha(J^M-J^{M,N});\cac_3^{\ga+\ga'-\eta}(\llbracket s,t \rrbracket_M;\cb)] \leq c_{x,\psi}  \lcl \lln t-s \rrn^\eta \cn[y^M-y^{M,N};\cq(\llbracket s,t \rrbracket_M)]+\frac{1}{\la_N^{\eta}} \rcl,$$
and
$$\cn[\delha(J^M-J^{M,N});\cac_3^{\ga-\eta}(\llbracket s,t \rrbracket_M;\cb_{\ga'})] \leq c_{x,\psi}  \lcl \lln t-s \rrn^\eta \cn[y^M-y^{M,N};\cq(\llbracket s,t \rrbracket_M)]+\frac{1}{\la_N^{\eta}} \rcl,$$
where $c_{x,\psi}:=C_\eta(\norm{\xrgh}_\ga,\norm{\psi}_{\cb_{\ga'}})$.
\end{lemma}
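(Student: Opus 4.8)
The plan is to reproduce, at the level of differences, the computation behind Lemma \ref{lem:contr-delha-j} (carried out in the Appendix). Throughout I write $x$ for $x^{2^M}$, as in the previous proofs. \textbf{Step 1: an exact formula for $\delha(J^M-J^{M,N})$.} Using $\delha(\delha y)=0$, the algebraic identities $\delha X^{x,i}=0$ and $(\delha X^{xx,ij})_{tus}=X^{x,i}_{tu}(\der x^j)_{us}$ of Lemmas \ref{lem-defi-x-x}--\ref{lem-prol-x-x-x}, and the Leibniz-type rule (\ref{rel-alg-prod}), I would expand $\delha J^{M,N}$ exactly as the Appendix expands $\delha J^{M}$: a finite sum of terms of the three generic shapes
$$X^{x,i}_{tu}\,P_N\lp\der f_i(y^{M,N})\rp_{us}\ ,\quad X^{x,i}_{tu}(\der x^j)_{us}\,P_N\lp f_i'(y^{M,N}_s)\cdot P_Nf_j(y^{M,N}_s)\rp\ ,\quad X^{xx,ij}_{tu}\,\der\!\lp P_N(f_i'(y^{M,N})\cdot P_Nf_j(y^{M,N}))\rp_{us},$$
together with the quadratic "remainder" contributions issued from the $M^i$-type term (\ref{decompo-m}) --- the pieces built on $\int_0^1[f_i'(y^{M,N}_s+r(\der y^{M,N})_{us})-f_i'(y^{M,N}_s)]\,dr$ and on the $a_{us}$-corrections turning $\delha y^{M,N}$ into $\der y^{M,N}$. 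Subtracting the matching expansion for $\delha J^M$ (no $P_N$, with $y^M$ in place of $y^{M,N}$), the difference $\delha(J^M-J^{M,N})_{tus}$ becomes a finite sum in which, term by term, exactly one factor has been replaced by a difference of the two "versions".

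\textbf{Step 2: splitting each difference.} Each such factor I decompose as
$$g(y^M)-P_N\,g(y^{M,N})=\lp g(y^M)-g(y^{M,N})\rp+(\id-P_N)\,g(y^{M,N})$$
(and analogously for the increments $\der(\cdot)_{us}$ and inside the bilinear term, where the inner $P_N$ is split the same way). The \emph{genuine-difference} summand is handled with the mean value theorem on $f_i,f_i'\in\cac^{3,\textbf{b}}$, the composition bound (\ref{contr-f-y-sobo}), the bilinear estimates (\ref{pp-1})--(\ref{pp-2}) and the a priori controls of $y^M$ and $y^{M,N}$ (Propositions \ref{prop:passage-wz-discret} and \ref{prop:cont-unif-y-m-n}); it reduces to $\cn[y^M-y^{M,N};\cac_1^0(\cb_{\ga'})]$, $\cn[y^M-y^{M,N};\cacha_1^\ga(\cb)]$ and $\cn[K^M-K^{M,N};\cac_2^{2\ga}(\cb)]$, i.e. to $\cn[y^M-y^{M,N};\cq(\llbracket s,t\rrbracket_M)]$. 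The \emph{projection-error} summand is estimated through (\ref{projection}) in the form $\norm{(\id-P_N)\vp}_{\cb_\ka}\leq\la_N^{-\la}\norm{\vp}_{\cb_{\ka+\la}}\leq c\,N^{-2\la}\norm{\vp}_{\cb_{\ka+\la}}$, the exponent $\ka$ being chosen so that, after the action of the regularizing operator $X^{x,i}$ or $X^{xx,ij}$ standing in front of it (recall $X^{x,i}\in\cac_2^{\ga-\nu}(\cl(\cb_\ka,\cb_{\ka+\nu}))$, $X^{xx,ij}\in\cac_2^{2\ga-\nu}(\cl(\cb_\ka,\cb_{\ka+\nu}))$), the product lands in $\cb$ (resp. in $\cb_{\ga'}$); the remaining Sobolev norm of $\vp\in\{f_i(y^{M,N}_s),\,f_i'(y^{M,N}_s)\cdot P_Nf_j(y^{M,N}_s),\dots\}$ is then bounded via (\ref{contr-f-y-sobo})--(\ref{pp-2}) and the uniform bounds. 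The inequality $\la<\ga+\ga'-1$ (with $\ga'\leq 2\ga$ and $\ga'>1-\ga>\tfrac12$) is exactly what makes each such exponent choice admissible, as in Proposition \ref{prop:contr-j-m-n}.

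\textbf{Step 3: H\"older bookkeeping.} It then remains to collect the powers of $\lln t-s\rrn$. Since $\der(f_i(y^M)-f_i(y^{M,N}))_{us}$ carries a factor $\lln u-s\rrn^\ga$ (through $\cacha_1^\ga(\cb)$, the $a_{us}$-correction handled by (\ref{hol-reg})), while the quadratic remainders carry $\lln u-s\rrn^{2\ga}$ or $\lln u-s\rrn^{\ga+\ga'}$, and using the H\"older exponents of $X^{x,i}$ and $X^{xx,ij}$, one checks that each term of $\delha(J^M-J^{M,N})_{tus}$ is bounded by $c_{x,\psi}\lln t-s\rrn^{\ga+\ga'}$ in $\cb$ and by $c_{x,\psi}\lln t-s\rrn^{\ga}$ in $\cb_{\ga'}$, times either $\cn[y^M-y^{M,N};\cq(\llbracket s,t\rrbracket_M)]$ or $N^{-2\la}$. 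Writing $\lln t-s\rrn^{\ga+\ga'}=\lln t-s\rrn^{\ga+\ga'-\la}\lln t-s\rrn^\la$ and $\lln t-s\rrn^{\ga}=\lln t-s\rrn^{\ga-\la}\lln t-s\rrn^\la$, bounding the spare factor $\lln t-s\rrn^\la$ by the length of $\llbracket s,t\rrbracket_M$ in the $\cq$-terms and by $1$ in the $N^{-2\la}$-terms, one obtains the two claimed estimates.

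\textbf{Expected main obstacle.} The delicate point is Step 2: one must insert and remove the right $P_N$'s --- notably the two nested ones inside $P_N(f_i'(y^{M,N}_s)\cdot P_Nf_j(y^{M,N}_s))$ --- selecting at each occurrence a Sobolev exponent for which simultaneously (i) (\ref{projection}) yields the gain $N^{-2\la}$ and (ii) the regularizing operator in front restores the target space $\cb$ or $\cb_{\ga'}$ with a non-negative leftover H\"older exponent; this is where the constraints $\la<\ga+\ga'-1$ and $\ga'\leq2\ga$ are genuinely used. The linearization of the quadratic $M^i$-type remainders in the difference, keeping all constants uniform in $M$ and $N$, likewise rests entirely on the a priori bounds of Propositions \ref{prop:passage-wz-discret} and \ref{prop:cont-unif-y-m-n}.
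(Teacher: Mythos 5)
Your proposal takes essentially the same route as the paper's proof: expand $\delha J^{M}$ and $\delha J^{M,N}$ via the same decomposition used for Lemma \ref{lem:contr-delha-j}, telescope the difference into genuine-difference terms (controlled, through the uniform bounds of Propositions \ref{prop:passage-wz-discret} and \ref{prop:cont-unif-y-m-n}, by $\cn[y^M-y^{M,N};\cq(\llbracket s,t\rrbracket_M)]$) plus projection-error terms $(\id-P_N)(\cdot)$ (controlled by $N^{-2\la}$ through (\ref{projection}), the regularizing action of $X^{x,i}$, $X^{xx,ij}$ into lower Sobolev spaces and the product estimate (\ref{pp-2})), exactly as in the Appendix. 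The only slight overstatement is in Step 3: the projection-error terms do not reach the full exponents $\ga+\ga'$ (resp. $\ga$), since gaining $N^{-2\la}$ costs a loss of $\la$ in the regularizing exponent, yielding $\ga+\ga'-\la$ (resp. $\ga-\la$) as in the paper's bounds of the type $\lln t-s\rrn^{3\ga-\la}N^{-2\la}$ --- but this is precisely the target exponent of the lemma, so the conclusion is unaffected.
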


\begin{proof}
See Appendix A.
\end{proof}

\begin{proof}[Proof of Theorem \ref{theo-rough}]
For the sake of clarity, we write here $x$ for $x^{2^M}$. Consider a time $T_1 \in \llbracket 0,1\rrbracket_M$. For any $t\in \llbracket 0,T_1 \rrbracket_M$, one has
\begin{multline*}
y^M_t-y^{M,N}_t=\lc \psi-P_N \psi \rc+\lc X^{x,i}_{t0}f_i(\psi)-X^{x,i}_{to}P_N f_i(P_N \psi) \rc\\
+\lc X^{xx,ij}_{t0}\lp f_i'(\psi) \cdot f_j(\psi)\rp-X^{xx,ij}_{t0}P_N \lp f_i'(P_N \psi) \cdot P_N f_j(P_N \psi) \rp \rc+\lc J^M_{t0}-J^{M,N}_{t0} \rc.
\end{multline*}
Thanks to Lemma \ref{lem:contr-a} (applied to $A=J^M-J^{M,N}$) and Lemma \ref{lem:space}, we already know that
$$\norm{J^M_{t0}-J^{M,N}_{t0}}_{\cb_{\ga'}} \leq  c_{x,\psi} \lcl T_1^\ga \cn[y^M-y^{M,N};\cq(\llbracket 0,T_1 \rrbracket_M)]+\la_N^{-\eta} \rcl.$$
Then
\bean
\lefteqn{\norm{X^{x,i}_{t0}f_i(\psi)-X^{x,i}_{t0}P_N f_i(P_N \psi)}_{\cb_{\ga'}}}\\
& \leq & \norm{X^{x,i}_{t0} \lc f_i(\psi)-f_i(P_N \psi) \rc}_{\cb_{\ga'}}+\norm{X^{x,i}_{t0}(\id-P_N)f_i(P_N \psi)}_{\cb_{\ga'}}\\
&\leq & c_{x,\psi}\lcl \norm{\psi-P_N \psi}_{\cb_{\ga'}}+\la_N^{-\eta} \rcl
\eean
and with similar arguments, we get
\begin{multline*}
\norm{X^{xx,ij}_{t0}\lp f_i'(\psi) \cdot f_j(\psi)\rp-X^{xx,ij}_{t0}P_N \lp f_i'(P_N \psi) \cdot P_N f_j(P_N \psi) \rp}_{\cb_{\ga'}}\\
\leq c_{x,\psi} \lcl \norm{\psi-P_N \psi}_{\cb_{\ga'}}+\la_N^{-\eta} \rcl,
\end{multline*}
so that
\begin{multline*}
\cn[y^M-y^{M,N};\cac_1^0(\llbracket 0,T_1 \rrbracket_M;\cb_{\ga'})] \\
\leq c_{x,\psi} \lcl T_1^\ga \cn[y^M-y^{M,N};\cq(\llbracket 0,T_1\rrbracket_M)]+\norm{\psi-P_N \psi}_{\cb_{\ga'}} +\la_N^{-\eta}\rcl.
\end{multline*}
Let us now analyze (in $\cb$) the decomposition: for every $s<t \in \llbracket 0,T_1 \rrbracket_M$,
\begin{multline*}
\delha(y^M-y^{M,N})_{ts}=\lc X^{x,i}_{ts}f_i(y^M_s)-X^{x,i}_{ts}P_N f_i(y^{M,N}_s) \rc\\
+\lc X^{xx,ij}_{ts}\lp f_i'(y^M_s) \cdot f_j(y^M_s)\rp-X^{xx,ij}_{ts}P_N \lp f_i'(y^{M,N}_s) \cdot P_N f_j(y^{M,N}_s) \rp \rc+\lc J^M_{ts}-J^{M,N}_{ts} \rc.
\end{multline*}
According to Lemmas \ref{lem:contr-a} and \ref{lem:space}, 
\begin{equation}\label{j-m-j-m-n}
\norm{J^M_{ts}-J^{M,N}_{ts}}_{\cb} \leq c_{x,\psi} \lln t-s \rrn^{2\ga} \lcl T_1^{\ga'-\ga} \cn[y^M-y^{M,N};\cq(\llbracket 0,T_1 \rrbracket_M)]+\la_N^{-\eta} \rcl.
\end{equation}
Moreover,
\bean
\lefteqn{\norm{X^{x,i}_{ts}f_i(y^M_s)-X^{x,i}_{ts}P_N f_i(y^{M,N}_s)}_\be}\\
&\leq & c_x \lln t-s \rrn^\ga  \norm{y^M_s-y^{M,N}_s}_\cb+\norm{X^{x,i}_{ts}(\id-P_N)f_i(y^{M,N}_s)}_\cb\\
&\leq & c_{x,\psi} \lln t-s \rrn^\ga \lcl T_1^\ga \cn[y^M-y^{M,N};\cq(\llbracket 0,T_1 \rrbracket_M)]+\norm{\psi-P_N\psi}_{\cb_{\ga'}}+\la_N^{-\eta} \rcl
\eean
and this kind of argument leads to
\begin{multline*}
\cn[y^M-y^{M,N};\cacha_1^\ga(\llbracket 0,T_1 \rrbracket_M;\cb_{\ga'})] \\
\leq c_{x,\psi} \lcl T_1^\ga \cn[y^M-y^{M,N};\cq(\llbracket 0,T_1\rrbracket_M)]+\norm{\psi-P_N \psi}_{\cb_{\ga'}} +\la_N^{-\eta}\rcl.
\end{multline*}
Finally,
\begin{multline*}
K^M_{ts}-K^{M,N}_{ts}=\\
\lc X^{xx,ij}_{ts}\lp f_i'(y^M_s) \cdot f_j(y^M_s)\rp-X^{xx,ij}_{ts} P_N \lp f_i'(y^{M,N}_s) \cdot P_Nf_j(y^{M,N}_s) \rp \rc+\lc J^M_{ts}-J^{M,N}_{ts} \rc,
\end{multline*}
and thanks to (\ref{j-m-j-m-n}), this decomposition easily allows us to conclude that
\begin{multline*}
\cn[y^M-y^{M,N};\cq(\llbracket 0,T_1 \rrbracket_M)] \\
\leq c^1_{x,\psi} \lcl T_1^{\ga'-\ga} \cn[y^M-y^{M,N};\cq(\llbracket 0,T_1\rrbracket_M)]+\norm{\psi-P_N \psi}_{\cb_{\ga'}} +\la_N^{-\eta}\rcl.
\end{multline*}
Let $T_1^\ast >0$ and $M_0\in \N$ such that $c^1_{x,\psi} (T_1^\ast)^{\ga-\ga'}=\frac{1}{2}$ and $(0,T_1^\ast) \cap \Pi^{M_0} \neq \emptyset$. The time $T_1$ is now fixed in $(0,T_1^\ast) \cap \Pi^{M_0}$ so as to retrieve, for every $M\geq M_0$,
$$\cn[y^M-y^{M,N};\cq(\llbracket 0,T_1 \rrbracket_M)] 
\leq 2c^1_{x,\psi} \lcl \norm{\psi-P_N \psi}_{\cb_{\ga'}} +\la_N^{-\eta}\rcl.
$$
It is readily checked that the same reasoning (with the same constants) holds on any interval $\llbracket kT_1,(k+1) T_1 \wedge 1\rrbracket_M$ and it entails that
$$\cn[y^M-y^{M,N};\cq(\llbracket kT_1,(k+1)T_1\wedge 1 \rrbracket_M)] 
\leq 2c^1_{x,\psi} \lcl \norm{y^M_{kT_1}-y^{M,N}_{kT_1}}_{\cb_{\ga'}} +\la_N^{-\eta}\rcl.
$$
As $T_1$ only depends on $x$ and $\psi$, it follows from a standard patching argument that
$$\cn[y^M-y^{M,N};\cac_1^0(\llbracket 0,1\rrbracket_M;\cb_{\ga'})]+\cn[y^M-y^{M,N};\cacha_1^\ga(\llbracket 0,1\rrbracket_M;\cb)] \leq c_{x,\psi}\lcl \norm{\psi-P_N\psi}_{\cb_{\ga'}}+ \la_N^{-\eta}\rcl,$$
which, together with the results of Corollary \ref{cor:wz-rough} and Proposition \ref{prop:passage-wz-discret}, completes the proof of (\ref{main-cont-rough}), since
\begin{multline*}
\cn[y-y^{M,N};\cac_1^\ga(\llbracket 0,1 \rrbracket_M;\cb)]\\
\leq c \lcl \cn[y-y^{M,N};\cacha_1^\ga(\llbracket 0,1 \rrbracket_M;\cb)]+\cn[y-y^{M,N};\cac_1^0(\llbracket 0,1 \rrbracket_M;\cb_{\ga'})] \rcl.
\end{multline*}

\end{proof}

\section{Appendix A}

Let us go back here to the technical proofs that have been left in abeyance in Section \ref{sec-rough-case}.

\begin{proof}[Proof of Lemma \ref{lem:contr-delha-j}]

For the sake of clarity, we write here $x$ for $x^{2^M}$ and $y$ for $y^{M,N}$. One has
\begin{multline}\label{decompo-j-m-n-1}
(\delha J^{M,N})_{tus}=X^{x,i}_{tu} P_N \der(f_i(y))_{us}-X^{x,i}_{tu} (\der x^j)_{us} P_N \lp f_i'(y_s) \cdot P_Nf_j(y_s) \rp\\
+X^{xx,ij}_{tu} P_N \der(f_i'(y) \cdot P_Nf_j(y))_{us},
\end{multline}
which easily entails
\begin{equation}\label{decompo-j-m-n-2}
(\delha J^{M,N})_{tus}=I^{M,N}_{tus}+II^{M,N}_{tus}+III^{M,N}_{tus}+IV^{M,N}_{tus},
\end{equation}
with
\begin{align*}
& I^{M,N}_{tus}:=X^{x,i}_{tu} P_N \lp \int_0^1 dr \, f_i'(y_s+r(\der y)_{us}) \cdot K^{M,N}_{us} \rp,\\
& II^{M,N}_{tus}:=X^{x,i}_{tu} P_N \lp \int_0^1 dr \, f_i'(y_s+r(\der y)_{us}) \cdot \lc a_{us} y_s+X^{ax,j}_{us} P_N f_j(y_s) \rc  \rp,\\
& III^{M,N}_{tus}:=X^{x,i}_{tu} P_N \lp \int_0^1 dr \, \lc f_i'(y_s+r(\der y)_{us})-f_i'(y_s) \rc (\der x^j)_{us}  \cdot P_N f_j(y_s) \rp,\\
& IV^{M,N}_{tus}:=X^{xx,ij}_{tu} P_N \der \lp f_i'(y) \cdot P_N f_j(y) \rp_{us}.
\end{align*}
First, $\norm{I^{M,N}_{tus}}_\cb \leq c \norm{x}_\ga \lln t-u \rrn^\ga \norm{K^{M,N}_{us}}_\cb$ and
\bean
\norm{II^{M,N}_{tus}}_{\cb} &\leq & c \lcl 1+\norm{\xrgh}_\ga^2\rcl \lln t-u \rrn^\ga \lcl \lln u-s \rrn^{\ga'} \norm{y_s}_{\cb_{\ga'}}+\lln u-s \rrn^{\ga+1/2} \norm{f_i(y_s)}_{\cb_{1/2}} \rcl\\
&\leq & c \lcl 1+\norm{\xrgh}_\ga^2\rcl \lln t-s \rrn^{\ga+\ga'} \lcl 1+\norm{y_s}_{\cb_{\ga'}} \rcl.
\eean
Then
\bean
\lefteqn{\norm{III^{M,N}_{tus}}_\cb}\\
 &\leq & c \norm{x}_\ga^2 \lln t-s \rrn^{2\ga} \norm{(\der y)_{us}}_\cb \norm{P_Nf_i(y_s)}_{L^\infty}\\
&\leq & c \norm{x}_\ga^2 \lln t-s \rrn^{2\ga} \lcl  \norm{(\delha y)_{us}}_\cb+\lln u-s \rrn^{\ga'} \norm{y_s}_{\cb_{\ga'}}  \rcl \lcl 1+\norm{(P_N-\id) f_i(y_s)}_{L^\infty}\rcl
\eean
and
\begin{eqnarray}
\norm{(P_N-\id) f_i(y_s)}_{L^\infty} \  \leq \  c \norm{(P_N-\id)f_i(y_s)}_{\cb_{1/2}} &\leq & \frac{c}{\la_N^{\ga'-1/2}} \norm{f_i(y_s)}_{\cb_{\ga'}} \nonumber\\
& \leq & \frac{c}{\la_N^{\ga'-1/2}} \lcl 1+\norm{y_s}_{\cb_{\ga'}}^2 \rcl.\label{serie-fourier}
\end{eqnarray}
Finally,
\bean
\norm{IV^{M,N}_{tus}}_\cb &\leq & c \norm{\xrgh}_\ga \lln t-u \rrn^{2\ga} \norm{(\der y)_{us}}_\cb \lcl 1+\norm{P_Nf_j(y_s)}_{L^\infty} \rcl\\
&\leq & c \norm{\xrgh}_\ga \lln t-u \rrn^{2\ga} \norm{(\der y)_{us}}_\cb \lcl 1+ \frac{\norm{y_s}_{\cb_{\ga'}}^2}{\la_N^{\ga'-1/2}} \rcl.
\eean
Going back to (\ref{decompo-j-m-n-2}), these estimates yield (\ref{contr-delha-j-m-n-1}). To get (\ref{contr-delha-j-m-n-2}), we resort to the decomposition (\ref{decompo-j-m-n-1}) and we observe that (for instance)
\bean
\norm{X^{x,i}_{tu} P_N f_i(y_u)}_{\cb_{\ga'}} &\leq & c \norm{x}_\ga \lln t-u \rrn^{\ga-(\ga'-\frac{1}{2})} \norm{f_i(y_u)}_{\cb_{1/2}} \\
&\leq & c \norm{x}_\ga \lln t-s \rrn^{\ga-(\ga'-\frac{1}{2})} \lcl 1+\norm{y_u}_{\cb_{\ga'}} \rcl,
\eean
and for every $\eta \in (\ga'-\ga,\frac{1}{2})$,
\begin{eqnarray}
\lefteqn{\norm{X^{x,i}_{tu} (\der x^j)_{us} P_N(f_i'(y_s) \cdot P_N f_j(y_s))}_{\cb_{\ga'}}} \nonumber\\
&\leq & c \norm{x}_\ga^2 \lln t-u\rrn^{\ga-(\ga'-\eta)} \lln u-s \rrn^\ga \norm{f_i'(y_s) \cdot P_Nf_j(y_s)}_{\cb_\eta} \nonumber\\
&\leq & c \norm{x}_\ga^2 \lln t-s \rrn^{2\ga-(\ga'-\eta)} \lcl 1+\norm{y_s}_{\cb_{\ga'}} \rcl \lcl 1+\norm{P_N f_j(y_s)}_{L^\infty} \rcl,\label{traitement-prod}
\end{eqnarray}
where, to get the last estimate, we have used the property (\ref{pp-1}). Together with (\ref{serie-fourier}), this leads to (\ref{contr-delha-j-m-n-2}).

\end{proof}

\begin{proof}[Proof of Lemma \ref{lem:space}]
Observe first that $\delha J^M$ can be decomposed as in (\ref{decompo-j-m-n-1}) or as in (\ref{decompo-j-m-n-2}) by suppressing in both expressions the projection operator $P_N$. In order to estimate $\norm{\delha (J^M-J^{M,N})_{tus}}_\cb$, we rely on the decomposition (\ref{decompo-j-m-n-2}) and its equivalent for $J^M$, with $I^M$ instead of $I^{M,N}$, etc. Write for instance
\bean
\lefteqn{I^M_{tus}-I^{M,N}_{tus}}\\
&=& X^{x,i}_{tu} \lp \int_0^1 dr \, \lc f_i'(y^M_s+r(\der y^M)_{us})-f_i'(y^{M,N}_s+r(\der y^{M,N})_{us}) \rc \cdot K^M_{us} \rp\\
& & +X^{x,i}_{tu} \lp \int_0^1 dr \, f_i'(y^{M,N}_s+r(\der y^{M,N})_{us}) \cdot \lc K^M_{us}-K^{M,N}_{us} \rc \rp\\
& & +X^{x,i}_{tu}(\id -P_N) \lp \int_0^1 dr \, f_i'(y^{M,N}_s+r(\der y^{M,N})_{us}) \cdot K^{M,N}_{us} \rp \ =: \ I^{(1)}_{tus}+I^{(2)}_{tus}+I^{(3)}_{tus}.
\eean
Owing to the uniform estimate (\ref{uni-cont-y-m}) and the continuous inclusion $\cb_{\ga'} \subset L^{\infty}$, one has first
\bean
\norm{I^{(1)}_{tus}}_{\cb}& \leq & c_{x,\psi} \lln t-s \rrn^{3\ga} \lcl \norm{y^M_s-y^{M,N}_s}_{L^\infty}+\norm{y^M_u-y^{M,N}_u}_{L^\infty} \rcl\\
&\leq & c_{x,\psi} \lln t-s \rrn^{3\ga} \cn[y^M-y^{M,N};\cq(\llbracket s,t\rrbracket_M)].
\eean
Then clearly $\norm{I^{(2)}_{tus}}_{\cb} \leq c_x \lln t-s \rrn^{3\ga} \cn[K^M-K^{M,N};\cac_2^{2\ga}(\llbracket s,t \rrbracket_M;\cb)]$ and
\bean
\norm{I^{(3)}_{tus}}_\cb &\leq & c_x \lln t-u \rrn^{\ga-\eta}\norm{(\id-P_N)\lp \int_0^1 dr \, f_i'(y^{M,N}_s+r(\der y^{M,N})_{us}) \cdot K^{M,N}_{us}\rp}_{\cb_{-\eta}}\\
&\leq & c_x \lln t-u \rrn^{\ga-\eta}\la_N^{-\eta} \norm{K^{M,N}_{us}}_\cb \ \leq \ c_{x,\psi}\lln t-s \rrn^{3\ga-\eta} \la_N^{-\eta},
\eean
where, for the last estimate, we have used the uniform control (\ref{ineq:contr-uni}). The other terms $II,III,IV$ of (\ref{decompo-j-m-n-2}) can be handled with similar arguments. Let us only elaborate on the estimate of $\norm{X^{xx,ij}_{tu} P_N \lp f_i'(y^{M,N}_u) \cdot (\id-P_N) \der f_j(y^{M,N})_{us}\rp}_\cb$, which may be a little bit more tricky than the others. Indeed, one must here appeal to the property (\ref{pp-2}) to get
\bean
\lefteqn{\norm{X^{xx,ij}_{tu}P_N \lp f_i'(y^{M,N}_u) \cdot (\id-P_N) \der f_j(y^{M,N})_{us}\rp}_\cb}\\
&\leq & c_x \lln t-u \rrn^{2\ga-\eta} \norm{f_i'(y^{M,N}_u) \cdot (\id-P_N) \der f_j(y^{M,N})_{us}}_{\cb_{-\eta}}\\
&\leq & c_x \lln t-u\rrn^{2\ga-\eta} \norm{f_i'(y^{M,N}_u)}_{\cb_{\ga'}} \norm{(\id-P_N) \der f_j(y^{M,N})_{us}}_{\cb_{-\eta}}\\
&\leq & c_{x,\psi} \lln t-u \rrn^{2\ga-\eta} \la_N^{-\eta} \norm{\der f_j(y^{M,N})_{us}}_\cb \ \leq \ c_{x,\psi} \lln t-s \rrn^{3\ga-\eta} \la_N^{-\eta}.
\eean
As far as $\norm{\delha(J^M-J^{M,N})_{tus}}_{\cb_{\ga'}}$ is concerned, one can start from the decomposition (\ref{decompo-j-m-n-1}) and observe for instance that
\bean
\lefteqn{\norm{X^{x,i}_{tu} f_i(y^M_u)-X^{x,i}_{tu} P_N f_i(y^{M,N}_u)}_{\cb_{\ga'}}}\\
&\leq & \norm{X^{x,i}_{tu} \lc f_i(y^M_u)-f_i(y^{M,N}_u)\rc }_{\cb_{\ga'}}+\norm{X^{x,i}_{tu} (\id-P_N)f_i(y^{M,N}_u)}_{\cb_{\ga'}}\\
&\leq & c_x \lln t-u \rrn^\ga \norm{\int_0^1 dr \, f_i'(y^{M,N}_u+r(y^M_u-y^{M,N}_u)) \cdot (y^M_u-y^{M,N}_u)}_{\cb_{\ga'}}\\
& & +c_x \lln t-u\rrn^{\ga-\eta} \norm{(\id-P_N) f_i(y^{M,N}_u)}_{\cb_{\ga'-\eta}}\\
& \leq & c_{x,\psi} \lcl \lln t-s \rrn^\ga \norm{y^M_u-y^{M,N}_u}_{\cb_{\ga'}}+\lln t-s \rrn^{\ga-\eta} \la_N^{-\eta} \rcl.
\eean
The other terms steming from (\ref{decompo-j-m-n-1}) can be estimated along the same lines.
\end{proof}

\section{Appendix B: Implementation}

We would like to conclude by insisting on the simplicity of the two algorithms (\ref{euler-scheme}) and (\ref{milstein-scheme}) as far as implementation is concerned. To this end, we focus on the case where $x=B$ is a fBm with Hurst index $H\in (1/3,1)$ and $x^M$ is its linear interpolation. As reported in Subsection \ref{subsec:gen-assump}, we know that $x^M$ satisfies Assumption \textbf{(X2)$_\ga$} (and accordingly Assumption \textbf{(X1)$_\ga$}) for any $\ga \in (\frac{1}{3},H)$. We also restrict the implementation to the case where $A$ stands for the Laplacian operator, i.e., $A=\partial_{\xi\xi}$. This allows us to consider the orthonormal basis of eigenfunctions $e_n:=\sqrt{2} \sin (\pi n \xi)$ ($n\in \N^\ast$) associated with the eigenvalues $\la_n:= \pi n^2$.

\subsection{Young case }\label{sub-sec-appli-young}
Consider first the Euler scheme (\ref{euler-scheme}) as $H>1/2$. By setting $Y^{M,N,l}_{t_k}=\left\langle Y^{M,N}_{t_k},e_l \right\rangle$, the formula reduces to an elementary iteration procedure: for $k\in \{0,\ldots,M\}$, $l\in \{1,\ldots, N\}$,
\begin{equation}
Y^{M,N,l}_{t_{k+1}}=e^{-\la_l/M} Y^{M,N,l}_{t_k}+\frac{M}{\la_l} \lcl 1-e^{-\la_l/M}\rcl \sum_{i=1}^m (\der B^i)_{t_{k+1}t_k} \left\langle f_i(Y^{M,N}_{t_k}), e_l \right\rangle.
\end{equation}

The following Matlab code is a possible implementation of this iterative procedure, for which we have taken $m=1$ and 
\begin{equation}\label{defi-f-k}
\psi(\xi)=\sqrt{2} \sin(\pi \xi) \ \ (\xi \in [0,1]), \quad f(x)=\frac{10\cdot(1-x)}{1+x^2} \ \ (x \in \R).
\end{equation}

\smallskip

\noindent 
To be more specific, the procedure simulates the evolution in time of the functional-valued path $Y^{M,N}$. At each step, the Fourier coefficients $\left\langle f_i(Y^{M,N}_{t_k}), e_l \right\rangle$ are computed by means of the discrete sinus transform function \textbf{dst} (and its inverse \textbf{idst}), according to the approximation formula
$$\lla f_i(Y^{M,N}_{t_k}),e_l \rra=\int_0^1 d\xi \, f_i(Y^{M,N}_{t_k}(\xi)) e_l(\xi) \approx \frac{1}{N} \sum_{n=0}^N f_i\lp Y^{M,N}_{t_k}\lp \frac{n}{N} \rp \rp e_l\lp \frac{n}{N}\rp.$$
As for the fBm increments, they are computed via (an appropriately rescaled version of) the Matlab-function \textbf{wfbm}, which leans on the decomposition of the process in a wavelet basis, following the method proposed by Abry and Sellan in \cite{ab-sel}. Let us also notice that the action of the semigroup is likely to be qualified by turning the heat semigroup $S^{A}$ into $S_t=S^{A}_{\ka t}$, for some parameter $\ka$ (we have picked $\ka=100$ in Figures 1 and 2 below). The theoretical study contained in Section \ref{sec-young-case} remains obviously valid for the modified system.

\lstset{language=Scilab}
\begin{lstlisting}

function[l]=eigval(N)
l=[]; for i=1:N, l(i)=(pi*i)^2;end

function[S]=semigr(M,N,l,kappa)
S=[];for i=1:N, v(i)=exp(-l(i)^2/(kappa*M));end

function=simulyoung(H,M,N,k,kappa)
l=eigval(N);S=semigr(M,N,l,kappa);
B=(1/M)^H*wfbm(H,M+1);
A=[1,zeros(1,N-1)];
u=zeros(1,N);fy=zeros(1,N);
for i=1:M
    u=dst(A(i,:));fy=0.5*idst(k*(1-u)./(1+u.^2));
    A(i+1,:)=S.*A(i,:)
             +((kappa./l).*(1-S))*M*(B(i+1)-B(i)).*fy;
    end
E=[];for j=1:M+1, E(j,:)=dst(A(j,:));end
plot(linspace(0,1,N+2),
     [0,dst([1,zeros(1,N-1)]),0]);
F(1)=getframe;for p=1:M
    plot(linspace(0,1,N+2),[0,E(p+1,:),0]);
    hold off;
    F(p+1)=getframe;end
movie(F,1,2)
    
\end{lstlisting}

\subsection{Rough case}\label{sub-sec-appli-rough}

Pick $H\in (1/3,1/2]$ and consider the Milstein scheme (\ref{milstein-scheme}). By projecting $Y^{M,N}$ onto $e_l$, one retrieves this time
\begin{multline}\label{algo-cas-rou}
Y^{M,N,l}_{t_{k+1}}=e^{-\la_l/2^M} Y^{M,N,l}_{t_k}+\frac{2^M}{\la_l} \lcl 1-e^{-\la_l/2^M}\rcl \sum_{i=1}^m (\der B^i)_{t_{k+1}t_k} \left\langle f_i(Y^{M,N}_{t_k}), e_l \right\rangle\\
+(2^M)^2 \sum_{i,j=1}^m (\der B^i)_{t_{k+1}t_k}(\der B^j)_{t_{k+1}t_k} \lp \int_{t_k}^{t_{k+1}} e^{-\la_l(t_{k+1}-u)} du \, (u-t_k)\rp\\
 \lla  P_N f_j(Y^{M,N}_{t_k}) \cdot f_i'(Y^{M,N}_{t_k}),e_l \rra. 
\end{multline}
The computation of the Fourier coefficients $\lla f_i(Y^{M,N}_{t_k}),e_l \rra$ can be implemented with the discrete sinus transform, as in the Young case. As for the computation of 
$$\lla P_N f_j(Y^{M,N}_{t_k}) \cdot f_i'(Y^{M,N}_{t_k}),e_l \rra,$$
it can be achieved with the same idea, starting from the approximation
\begin{multline*}
\lla P_Nf_j(Y^{M,N}_{t_k}) \cdot f_i'(Y^{M,N}_{t_k}),e_l \rra \\
\approx  \frac{1}{N^2} \sum_{n=0}^N \sum_{p=0}^N \sum_{m=0}^N e_l\lp \frac{n}{N}  \rp e_p\lp \frac{n}{N} \rp e_p\lp \frac{m}{N}\rp   f_i'\lp Y^{M,N}_{t_k}\lp \frac{n}{N} \rp \rp f_j\lp Y^{M,N}_{t_k}\lp \frac{m}{N}\rp \rp.
\end{multline*}
These considerations easily lead to the construction of an algorithm for (\ref{algo-cas-rou}).

\begin{figure}[!ht]\label{figure-un}
\epsfig{figure=dessin-euler.eps,width=15cm, height=9.2cm}
\caption{A simulation of $t \mapsto Y^{M,N}_t(\frac{1}{2})$ (for the conditions described by (\ref{defi-f-k})) via the Euler scheme when $H=0.8$. Here, $M=N=3000$.} 
\end{figure}

\begin{figure}[!ht]\label{figure-deux}
\epsfig{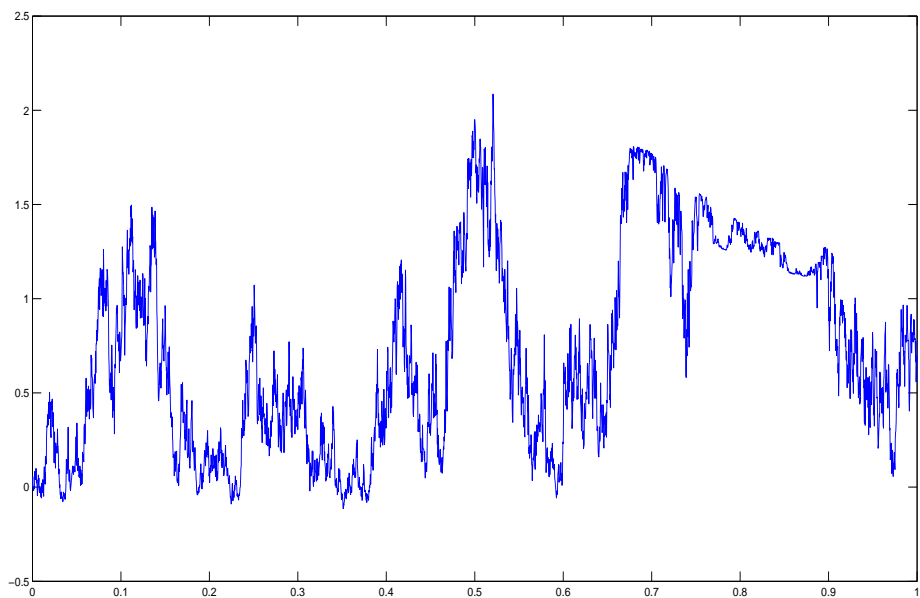}
\caption{A simulation of $t \mapsto Y^{M,N}_t(\frac{1}{2})$ (for the conditions described by (\ref{defi-f-k})) via the Milstein scheme when $H=0.35$. Here, $M=N=3000$.} 
\end{figure}

\

\newpage

\textbf{Acknowledgement}

\

I am very grateful to two anonymous referees for their careful reading and their suggestions, which greatly helped me to improve the presentation of these results.

\bibliography{mabiblio-scheme-rhe-updated}{}
\bibliographystyle{plain}

\end{document}